\newcommand{\pl}[1]{\foreignlanguage{polish}{#1}}
\theoremstyle{plain}
\newtheorem{theorem}{Theorem}[section]
\newtheorem*{sq}{Stein's question}
\newtheorem{proposition}{Proposition}[section]
\newtheorem{corollary}[proposition]{Corollary}
\newtheorem{lemma}[proposition]{Lemma}
\newtheorem{condition}[proposition]{Condition}
\newtheorem*{dfc}{Dimension-free conjecture}
\theoremstyle{definition}
\newtheorem{remark}{Remark}[section]
\newtheorem*{remark*}{Remark}
\numberwithin{equation}{section}
\newcommand{\RR}{\mathbb{R}}
\newcommand{\ZZ}{\mathbb{Z}}
\newcommand{\TT}{\mathbb{T}}
\newcommand{\CC}{\mathbb{C}}
\newcommand{\NN}{\mathbb{N}}
\newcommand{\GG}{U}
\newcommand{\II}{\mathbb{I}}
\newcommand{\calB}{\mathcal{B}}
\newcommand{\ind}[1]{{\mathds{1}_{{#1}}}}
\newcommand{\abs}[1]{{\lvert {#1} \rvert}}
\newcommand{\8}{\infty}
\subjclass[2020]{42B25, 42B15}
\keywords{discrete maximal function, dimension-free estimates}
\title[Dimension-free estimates for discrete maximal functions of gaussians]
{Dimension-free estimates for discrete maximal functions related to normalized gaussians}
\author{Mariusz Mirek \orcidlink{0000-0001-7641-9893}}
\address{Mariusz Mirek \\
  Department of Mathematics\\
  Rutgers University\\
Piscataway, NJ 08854\\ USA \&
	Instytut Matematyczny\\
	Uniwersytet \pl{Wroc{\l}awski}\\
	Plac Grun\-waldzki 2\\
	50-384 \pl{Wroc{\l}aw}\\
	Poland}
\email{mariusz.mirek@rutgers.edu}
\author{Tomasz Z. Szarek \orcidlink{0000-0003-0821-5607} }
\address{ Tomasz Z. Szarek\\
Department of Mathematics \\
University of Georgia \\
Athens, GA 30602 \\ 
USA  \&
Instytut Matematyczny\\
Uniwersytet \pl{Wroc{\l}awski}\\
Plac Grun\-waldzki 2\\
50-384 \pl{Wroc{\l}aw}\\
Poland}
\email{tzszarek@uga.edu}
\author[Wr{\'o}bel]{B{\l}a{\.z}ej Wr{\'o}bel \orcidlink{0000-0003-0413-8931}}
\address[B{\l}a{\.z}ej Wr{\'o}bel]{
	Instytut Matematyczny Polskiej Akademii Nauk\\
	Śniadeckich 8\\
	00-656 Warszawa\\
	Poland \& Instytut Matematyczny\\
	Uniwersytet \pl{Wroc{\l}awski}\\
	Plac Grun\-waldzki 2\\
	50-384 \pl{Wroc{\l}aw}\\
	Poland}
\email{blazej.wrobel@math.uni.wroc.pl}
\thanks{Mariusz Mirek was partially supported by the NSF grant
(DMS-2154712) and by the NSF CAREER grant (DMS-2236493).  Tomasz
Z. Szarek and B{\l}a{\.z}ej Wr{\'o}bel were supported by the National
Science Centre, Poland, grant Sonata Bis 2022/46/E/ST1/00036.}
\begin{document}

 
\selectlanguage{english}

\begin{abstract}
In this paper, we investigate dimension-free estimates for maximal
operators of convolutions with discrete normalized Gaussians (related
to the Theta function) in the context of maximal, jump and
$r$-variational inequalities on $\ell^p(\mathbb{Z}^d)$ spaces. This is
the first instance of a discrete operator in the literature where
$\ell^p(\mathbb{Z}^d)$ bounds are provided for the entire range of
$1 < p < \infty$. The methods of proof rely on developing robust
Fourier methods, which are combined with the fractional derivative, a
tool that has not been previously applied to studying similar
questions in the discrete setting.
\end{abstract}

\maketitle

\section{Introduction}
\label{sec:1}

\subsection{Statement of the main results}
Let  $\TT:=\RR/\ZZ$ denote one-dimensional torus, which will be identified with  $[-1/2,1/2)$. For $t>0$ and $\zeta \in \TT$ we define a one-dimensional Theta function by
\begin{align*}
\theta_t(\zeta):=\sum_{k\in \ZZ}\exp(-\pi(k-\zeta)^2 t).
\end{align*}
For $\xi \in \TT^d$ we also define its  $d$-dimensional variant by
\begin{equation}
\label{eq:Theta_def}
\Theta_t(\xi):=\prod_{j\in[d]}\theta_t(\xi_j)=\sum_{k\in\ZZ^d} \exp(-\pi|k-\xi|^2t),
\end{equation}
where $|\cdot|$ denotes the Euclidean norm.

For $t>0$  the discrete normalized Gaussian kernel $g_t\colon \ZZ^d \to \RR$ is defined by
\begin{align*}
g_t(n) := \frac{1}{\Theta_{1/t}(0)}\exp\bigg(- \frac{\pi|n|^2}{t}\bigg), \qquad n\in \ZZ^d,
\end{align*}
and the corresponding convolution operator on $\ZZ^d$ with kernel $g_t$ is defined by 
\begin{equation*}
G_t f(x):= g_t*f(x)=\sum_{n\in \ZZ^d}g_t(n)f(x-n),\qquad x\in \ZZ^d, \quad f\in \ell^1(\ZZ^d).
\end{equation*}
We immediately see that $G_t$ is a
contraction on all $\ell^p(\ZZ^d)$ spaces for all $1\le p\leq \infty,$ that is
\begin{equation}
\label{eq:Gtcontr}
\sup_{t>0}\|G_t \|_{\ell^p\to \ell^p}\le 1,
\end{equation}
which follows from Minkowski's convolution inequality and the fact that $\|g_t\|_{\ell^1}=1$ for any $t>0$.
The main goal of this paper is to prove the following dimension-free maximal, jump, variational and oscillation estimates. See Section~\ref{sec:var} for the definitions of these objects.
\begin{theorem} \label{thm:main}
For each $p\in(1,\infty)$ there exists $C_{p}>0$ independent of the dimension $d\in\ZZ_+$ such that
\begin{align} \label{jumpest}
\sup_{\lambda>0} \big\| \lambda N_{\lambda}(G_t f : t > 0)^{1/2} \big\|_{\ell^p} 
\le 
C_{p} \|f\|_{\ell^p}, \qquad f\in \ell^p(\ZZ^d).
\end{align}	
In particular, 	
for each $p\in(1,\infty)$ and $r \in (2,\infty)$ there exists $C_{p,r}>0$ independent of the dimension $d\in\ZZ_+$  such that
\begin{align*}
\big\| V^r(G_t f : t>0) \big\|_{\ell^p} \le C_{p,r} \|f\|_{\ell^p}, \qquad f\in \ell^p(\ZZ^d).
\end{align*}
Further, for each $p\in(1,\infty)$ there exists $C_p>0$ independent of the dimension $d\in\ZZ_+$  such that
\begin{align} \label{oscest}
\sup_{J\in\ZZ_+}\sup_{I\in \mathfrak{S}_J((0,\infty))} 	
\big\| O_{I, J}^{2}( G_t f : t > 0) \big\|_{\ell^p} \le C_{p} \|f\|_{\ell^p}, \qquad f\in \ell^p(\ZZ^d).
\end{align}
Furthermore, for each $p\in(1,\infty)$ there exists $C_p>0$ independent of the dimension $d\in\ZZ_+$  such that
\begin{align} \label{id:A}
\big\|\sup_{t > 0}|G_t f| \big\|_{\ell^p}\le C_p \|f\|_{\ell^p}, \qquad f\in \ell^p(\ZZ^d).
\end{align}
\end{theorem}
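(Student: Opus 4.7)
The plan is to reduce the four statements of Theorem~\ref{thm:main} to the jump inequality~\eqref{jumpest}: the $r$-variational bound, the oscillation inequality~\eqref{oscest}, and the maximal inequality~\eqref{id:A} then follow by standard functional-analytic implications — Bourgain's Rademacher--Menshov-type square-function lemma and the Jones--Seeger--Wright machinery relating $\lambda$-jump counts to $r$-variations, oscillations and suprema — none of which degrades the dimension-free character of the constants. I therefore focus on~\eqref{jumpest}.

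Poisson summation identifies $G_t$ as the Fourier multiplier operator on $\ell^2(\ZZ^d)$ with symbol
\[
m_t(\xi)=\frac{\Theta_t(\xi)}{\Theta_t(0)}=\prod_{j=1}^{d}\phi_t(\xi_j),\qquad \phi_t:=\frac{\theta_t}{\theta_t(0)},\quad \xi\in\TT^d,
\]
and this tensor-product structure is ultimately what produces the dimension-free estimates. The principal obstruction is that $G_{t_1}G_{t_2}\neq G_{t_1+t_2}$, so $\{G_t\}_{t>0}$ is \emph{not} a semigroup and Stein's maximal theorem for symmetric diffusion semigroups cannot be applied. The substitute, introduced here in the discrete setting for the first time, is the fractional time derivative $\partial_t^\alpha$ with $\alpha\in(0,1)$. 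The $p=2$ ingredient is the dimension-free Littlewood--Paley-type square function
\[
\Big\|\Big(\int_0^\infty t^{2\alpha-1}\,|\partial_t^\alpha G_tf|^2\,dt\Big)^{1/2}\Big\|_{\ell^2(\ZZ^d)}\le C_\alpha \|f\|_{\ell^2(\ZZ^d)},
\]
which by Plancherel reduces to the uniform pointwise bound $\int_0^\infty t^{2\alpha-1}|\partial_t^\alpha m_t(\xi)|^2\,dt\le C_\alpha$. The logarithmic derivative identity $\partial_t\log m_t=\sum_{j=1}^d\partial_t\log\phi_t(\xi_j)$, combined with the two regimes $\phi_t(\zeta)\approx e^{-\pi t\zeta^2}$ for $t\ge 1$ (matching the continuous Gauss--Weierstrass symbol) and $\phi_t(\zeta)=1+O(e^{-\pi/t})$ for $t\le 1$ (a consequence of Poisson summation), yields the required dimension-free control. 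A Lépingle-type argument then delivers~\eqref{jumpest} at $p=2$.

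To extend~\eqref{jumpest} to $p\in(1,\infty)\setminus\{2\}$ I would split the time parameter dyadically. For $t\ge 1$, $m_t$ is well-approximated by the symbol $e^{-\pi t|\xi|^2}$ of the continuous Gauss--Weierstrass semigroup on $\RR^d$, whose dimension-free jump and variational inequalities are available from the symmetric diffusion semigroup theory of Stein; a tensorized Calderón--Zygmund argument exploiting the product form of $m_t$ controls the discrete-to-continuous error in a dimension-free manner. For $t\le 1$, the difference $G_t-I$ is at the Fourier level exponentially small in $1/t$, so its jump contribution is absorbed by the fractional derivative square function from the previous step. Throughout, the principal obstacle remains the absence of semigroup structure, and the fractional time derivative plays the role of the Littlewood--Paley $g$-function that a diffusion semigroup would otherwise supply for free.
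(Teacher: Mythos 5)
Your outline captures some of the right ingredients (a small-scale/large-scale splitting in $t$, a comparison with a heat semigroup, and the fractional derivative as the substitute for the missing semigroup structure), but as written it has three genuine gaps.

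First, the claim that the oscillation inequality \eqref{oscest} follows from the jump inequality \eqref{jumpest} by ``standard functional-analytic implications'' is false. The jump estimate implies $V^r$ bounds only for $r>2$, and while $O^2_{I,J}\le V^2$, the $V^2$ bound itself fails; moreover, as recorded after \eqref{eq:7255}, $2$-oscillations are \emph{not} an endpoint for $r$-variations in the sense of \eqref{estt1}. The paper must prove \eqref{oscest} separately, by comparing $G_{2^n}$ with the discrete semigroup $Q_{2^n}$ and invoking the known dimension-free oscillation inequality \eqref{id:oscQ} for $Q_t$, with the long/short splitting \eqref{eq:70}. Second, your $p=2$ step is not sufficient: a uniform bound on $\int_0^\infty t^{2\alpha-1}|\partial_t^\alpha m_t(\xi)|^2\,dt$ gives a $g$-function estimate, but a ``L\'epingle-type argument'' does not convert a square function for a non-semigroup family into a $\lambda$-jump inequality. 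What the paper actually does is compare $G_{2^n}$ with the discrete semigroup $Q_{2^n}$ (for which dimension-free jump bounds \eqref{eq:489} are known) and control the difference by a square function tested against the Littlewood--Paley pieces $S_j$, via \cite[Theorem 2.14]{MZKS1}; the fractional derivative is used for something different, namely to prove the H\"older-in-$t$ operator bound $\|G_{t+w}-G_t\|_{\ell^{q_0}\to\ell^{q_0}}\lesssim (w/t)^{\alpha}$ with $q_0$ near $1$ (Theorem~\ref{thm:implp}, via Stein's complex interpolation between an $\ell^2$ bound coming from the multiplier derivative estimates and an $\ell^1$ bound coming from the convolution structure), which is exactly the hypothesis needed to run the short-variation bootstrap \cite[Theorem 2.39]{MZKS1}. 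Third, for $p\neq 2$ you propose a ``tensorized Calder\'on--Zygmund argument'' to transfer from the continuous Gauss--Weierstrass semigroup on $\RR^d$; Calder\'on--Zygmund constants grow with the dimension, and no dimension-free discrete-to-continuous transference of this kind is available --- this is precisely the obstruction the paper avoids by comparing with the \emph{discrete} semigroup $Q_t$ on $\ZZ^d$ and by replacing singular-integral technology with the Fourier estimates of Sections~\ref{sec:Fouest}--\ref{sec:Festsmall} and the abstract $\ell^p$ machinery of \cite{MZKS1}.
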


\subsection{Historical background and our motivations}
In order to explain why we are undertaking Theorem \ref{thm:main}, we
need to briefly describe the historical background of dimension-free
estimates in harmonic analysis. Let $\GG$ be a closed, bounded,
symmetric, and convex subset of $\RR^d$ with non-empty interior, which
we simply call a symmetric convex body. We set
$\GG_t:=\{y\in\RR^d: t^{-1}y\in \GG\}$ for every $t>0$ and, for every
$x\in\RR^d$ and $f\in L^1_{\rm loc}(\RR^d)$, we define the
Hardy--Littlewood averaging operator by
\begin{align}
\label{eq:112}
M_t^{\GG}f(x):=\frac{1}{|\GG_t|}\int_{\GG_t}f(x-y) \, dy.
\end{align}
A typical choice is $\GG=B^q(d)$ for $q\in[1, \infty]$, where $B^q(d)$
is the unit ball induced by the $\ell^q$ norm in $\RR^d$, see \eqref{eq:1} and \eqref{eq:1a}. Specifically,
$B^2(d)$ is the Euclidean unit ball and $B^\infty(d)$ is the cube in
$\RR^d$.  For $\emptyset\neq\mathbb I\subseteq \RR_+$ and
$p\in[1, \infty]$, let $0<C(p,\mathbb I, \GG)\le \infty$ be the smallest
constant for the following maximal inequality
\begin{align*}
\big\|\sup_{t\in\mathbb I}|M_t^{\GG}f|\big\|_{L^p(\RR^d)}\le C(p,\mathbb I, \GG)\|f\|_{L^p(\RR^d)},
\qquad f\in L^p(\RR^d).
\end{align*}
Using a standard covering argument (Vitali covering lemma) for $p=1$ and interpolation with $p=\infty$, it is not hard
to see that $C(p,\RR_+, \GG)<\infty$ for every $p\in(1, \infty]$ since $C(\infty,\RR_+, \GG)=1$.

Similarly, for every $t > 0$ let
$\GG_t:=\{y\in \RR^d\colon t^{-1}y\in \GG\}$ denote the
dilate of a set $\GG\subseteq \RR^d$.  For $x\in\ZZ^d$, we
define a discrete analogue of \eqref{eq:112} by
\begin{align*}
	\mathcal M_t^{\GG}f(x):=\frac{1}{|\GG_t\cap \ZZ^d|}\sum_{y\in \GG_t\cap\ZZ^d}f(x-y),
	\qquad
	f\in\ell^1(\ZZ^d),
\end{align*}
where the right-hand side above is understood as $0$ if $\GG_t\cap \ZZ^d = \emptyset$.
The corresponding maximal operator is given by $\mathcal M^{\GG} f(x) := \sup_{t>0} |\mathcal M_t^{\GG}f(x)|$.
For $\emptyset\neq\mathbb I\subseteq \RR_+$, $\GG\subseteq \RR^d$ and $p\in[1, \infty]$, let
$0<\mathcal C(p,\mathbb I, \GG)\le \infty$ be the smallest constant for
the following maximal inequality
\begin{align*}
\big\|\sup_{t\in\mathbb I}|\mathcal M^{\GG}_tf|\big\|_{\ell^p(\ZZ^d)}
\le \mathcal C(p,\mathbb I, \GG)\|f\|_{\ell^p(\ZZ^d)},
\qquad
f\in\ell^p(\ZZ^d).
\end{align*}
If $U \subseteq \RR^d$ is a symmetric convex body, then using
similar arguments as in the continuous setting yield  $\mathcal C(p,\RR_+, \GG)<\infty$ for every $p\in(1, \infty]$.

\medskip

A major open problem in this area since the late 1970's arose from
Stein's work \cite{SteinMax} and asserts that:
\begin{dfc}
For every $p\in(1, \infty]$
there is a constant $C_p>0$ such that
\begin{align}
\label{eq:36}
\sup_{d\in\ZZ_+}\sup_{\GG\in\mathfrak B(d)}C(p,\RR_+, \GG)\le C_p
\end{align}
where $\mathfrak B(d)$ is the set of all convex symmetric bodies in $\RR^d$.
\end{dfc}

This problem has been studied for four decades by several authors.  We
now briefly list the current state of the art concerning
\eqref{eq:36}.

\smallskip

\begin{enumerate}[label*={\arabic*}.]
\item Stein \cite{SteinMax} (see also Stein and Str\"omberg
\cite{StStr}) proved that there exists a constant $C_p>0$ depending
only on $p\in(1, \infty]$ such that
$\sup_{d\in\NN}C(p,\RR_+, B^2(d))\le C_p$. The key idea from
\cite{SteinMax, StStr} is based on the method of rotations, which
enables one to view high-dimensional spheres as an average of rotated
low-dimensional ones. This in turn implies dimension-free estimates for the
maximal spherical function. Unfortunately, this method is limited to the Euclidean balls. 

\smallskip

\item Bourgain \cite{B4,B5} and Carbery
\cite{Car1} independently proved \eqref{eq:36} for $p\in(3/2, \infty]$. They also
showed a dyadic variant of \eqref{eq:36} for $p\in(1, \infty]$ with
$\mathbb D=\{2^n:n\in\ZZ\}$ in place of $\RR_+$. Although inequality \eqref{eq:36}  for $p\in(1, 3/2]$
still remains open,  the case of $q$-balls is quite well understood.

\smallskip

\item For the $q$-balls $B^q(d)$, we know that for every
$p\in(1, \infty]$ and $q\in[1, \infty]$, there is a constant
$C_{p, q}>0$ such that
$\sup_{d\in\NN}C(p,\RR_+, B^q(d))\le C_{p, q}$. This was established
by M\"uller in \cite{Mul1} (for $q\in [1, \infty)$) and by Bourgain in
\cite{B6} (for cubes $q=\infty$).

\smallskip

\item The first and third authors with Bourgain and Stein \cite{BMSW1}
extended the results from \cite{B4, B5, B6, Car1, Mul1} to
$r$-variational estimates with bounds independent of the dimension.
We also found new proofs of maximal theorems from \cite{B4, B5,
Car1}. The methods from \cite{BMSW1} also shed new light on
dimension-free estimates in the discrete setup, which is a new and
interesting direction.

\end{enumerate}

Several years ago the first and third authors in a collaboration with Bourgain and
Stein \cite{BMSW2, BMSW3, BMSW4} initiated the study of dimension-free estimates in the discrete setting. This line of research was partially motivated by a question of E.M.\ Stein \cite{SteinPR}  from the
mid 1990's about the dimension-free estimates for the discrete
Hardy--Littlewood maximal functions corresponding to the Euclidean
balls $B^2(d)$.
\begin{sq}
Is it 
	true that there is a universal constant $C>0$ such that
	\begin{align}
		\label{eq:29}
		\sup_{d\in\NN}\mathcal C(2,\RR_+, B^2(d))=\sup_{d\in\NN}\mathcal C(2,\sqrt{\NN_0}, B^2(d))\le C\, ?
	\end{align}
\end{sq}
We now briefly mention existing results in the discrete setting and their relation to \eqref{eq:36} and \eqref{eq:29}.
\begin{enumerate}[label*={\arabic*}.]

\item In \cite{BMSW3}, it was shown
that \eqref{eq:36} cannot be expected in the discrete setup.
Surprisingly, in contrast to the continuous setup \cite{B4, B5, B6, BMSW1,
Car1, Mul1, SteinMax, SteinRiesz}, one can prove
\cite{BMSW3} that for every $p\in(1, \infty)$, there is a constant
$C_p>0$ such that for certain ellipsoids $E(d)\subset\RR^d$ and all
$d\in \ZZ_+$, one has
\begin{align}
\label{eq:13}
\mathcal C(p,\RR_+, E(d)) \ge C_p(\log d)^{1/p}.
\end{align}
On the other hand, for the cubes $B^\infty(d)$, it was also shown in
\cite{BMSW3} that for every $p\in(3/2, \infty]$, there is a constant $C_p>0$
such that
$\sup_{d\in\NN}\mathcal C(p,\RR_+, B^{\infty}(d))\le C_p$. For
$p\in(1, 3/2]$, it still remains open whether
$\sup_{d\in\NN}\mathcal C(p,\RR_+, B^{\infty}(d))$ is finite. However,
one can prove that for all $p\in(1, \infty]$, there is a constant
$C_p>0$ such that
$\sup_{d\in\NN}\mathcal C(p,\mathbb D, B^{\infty}(d))\le C_p$.

\smallskip
\item In \cite{BMSW2, BMSW4}, similar questions in the context of
discrete Euclidean balls were studied, and it was shown that there is a universal constant $C>0$ such that for every
$p\in[2, \infty]$ we have
\begin{align}
\label{eq:10}
\sup_{d\in\ZZ_+}\mathcal C(p,\mathbb D, B^{2}(d))\le C.
\end{align}
A similar result was also obtained by the authors \cite{MiSzWr} in the
context of discrete Euclidean spheres. However, the general problem for $\RR_+$
in place of $\mathbb D$ remains open and thus Stein's question \eqref{eq:29} is still not answered.

\smallskip

\item Recently, the first and third authors in a collaboration with
Kosz and Plewa \cite{KMPW} proved that for every
$\GG\in\mathfrak B(d)$, one has
\begin{align}
\label{eq:51}
C(p,\RR_+, \GG)\le \mathcal C(p,\RR_+, \GG)
\quad \text{ for all } \quad
p\in[1, \infty],
\end{align}
where the case $p=1$ corresponds to the optimal constants in the weak
type $(1, 1)$ inequalities respectively in $\RR^d$ and $\ZZ^d$. It was
also shown in \cite{KMPW} that
$C(1,\RR_+, B^{\infty}(d))= \mathcal C(1,\RR_+, B^{\infty}(d))$,
which, in view of Aldaz's result \cite{Ald1}, yields that
$\mathcal C(1,\RR_+, B^{\infty}(d))\ _{\overrightarrow{d\to\infty}}\ \infty$. Thus
the boundedness of $\sup_{d\in\NN}\mathcal C(p,\RR_+, B^{\infty}(d))$
for $p\in(1, 3/2]$ cannot be deduced by interpolation from
\cite{BMSW3} for $p\in(3/2, \infty]$.  Inequality \eqref{eq:51}
exhibits a well known phenomenon in harmonic analysis that states that
it is harder to establish bounds for discrete operators than the
bounds for their continuous counterparts, and this is the best that we
could prove in this generality.
\end{enumerate}

Our motivations for investigating Theorem~\ref{thm:main}, on the one
hand, come from the articles \cite{BMSW2} and \cite{MiSzWr}, where
dimension-free estimates for the dyadic versions of the maximal
operators corresponding to the discrete Euclidean balls and spheres
were proved for all $p \in [2,\infty]$, see \eqref{eq:10}. On the
other hand, our motivations come from \cite{BMSW3}, where certain
ellipsoids satisfying \eqref{eq:13} have been constructed to
demonstrate that the dimension-free phenomena in the discrete setting
is not as broad as in the continuous setting. 

Taking into account the results from \cite{BMSW2, BMSW3} and
\cite{MiSzWr} and the observation that Euclidean balls are not
significantly different from ellipsoids, it was tempting to prove
\eqref{eq:13} with Euclidean balls $B^2(d)$ in place of the ellipsoids
$E(d)$.  At some point, we believed that the maximal function
appearing in  \eqref{id:A} could provide a negative answer to Stein's question \eqref{eq:29}. This was caused by a  simple pointwise
bound
\begin{align}
\label{eq:14}
\sup_{t > 0}|G_t f (x)| \le \mathcal{M}^{B^2(d)} f(x) \le \mathcal{M}^{S^2(d)} f(x), \qquad x\in \ZZ^d, \quad f\ge0,
\end{align}
where $S^2(d) := \{x \in \mathbb{R}^d : |x|_2 = 1\}$ is the Euclidean
unit sphere.  Inequality \eqref{eq:14} immediately guarantees that
dimension-free $\ell^p(\ZZ^d)$ estimates for $\mathcal{M}^{S^2(d)}f$
imply dimension-free $\ell^p(\ZZ^d)$ estimates for
$\mathcal{M}^{B^2(d)}f$ and the latter imply dimension-free
$\ell^p(\ZZ^d)$ estimates for $\sup_{t > 0}|G_t f|$. However,
inequality \eqref{eq:14} can also serve to disprove dimension-free
estimates for the $\mathcal{M}^{B^2(d)}f$ if it can be demonstrated
that there are no dimension-free estimates for $\sup_{t > 0}|G_t f|$
on $\ell^p(\ZZ^d)$ spaces. That was our hope, but this paper shows
that we were mistaken, as dimension-free estimates for
$\sup_{t > 0}|G_t f|$ are available for all $\ell^p(\ZZ^d)$ spaces
whenever $1<p<\infty$, see Theorem \ref{thm:main}.  Now our main
result, Theorem~\ref{thm:main}, may provide evidence that it could be
possible to obtain dimension-free estimates for
$\mathcal{M}^{B^2(d)}f$ and $\mathcal{M}^{S^2(d)}f$. Further evidence
supporting this supposition has been provided very recently by the
third author in collaboration with Niksiński \cite{NW}. Namely, it was
proved there that for every $\varepsilon>0$ there is a constant
$C_{\varepsilon}>0$ depending only on $\varepsilon$ and such that for
every $p\in[2, \infty]$ one has
\begin{align}
\label{eq:NW}
\sup_{d\in\ZZ_+}\mathcal C(p,(0,d^{1/2-\varepsilon}], B^{2}(d))\le C_{\varepsilon}.
\end{align}
Both our main result, Theorem \ref{thm:main}, and inequality
\eqref{eq:NW} strongly suggest that Stein's question \eqref{eq:29} may
indeed have a positive answer. We hope to explore these topics in
future work.

\subsection{Ideas of the proof and its corollaries}
The proof of Theorem~\ref{thm:main} is based on two main
ingredients. The first of these ingredients is the Stein complex
interpolation theorem \cite{bigs}. In contrast to other discrete
dimension-free problems \cite{BMSW2, BMSW3, BMSW4, MiSzWr}, the
fractional derivative is a very useful tool that allows us to
implement Stein' complex interpolation theorem.  The second of these
ingredients, and perhaps the most important one from the perspective
of our article, are estimates for the Fourier transform of the kernel
$g_t$, which ultimately lead to a successful comparison of $g_t$ with
the heat kernel for the discrete Laplacian, for which dimension-free
maximal, jump, variational, and oscillation estimates are known, see \cite{BMSW3,JR,
MSZ1}.  It is worth mentioning that $G_t$ as well as its natural local
modification $G_{\psi^{-1}(t)}$ for $t \in (0,1)$, see
\eqref{def:psi}, are not (local) semigroups. This makes the analysis
of $G_t$ much more complicated. Note that for semigroups there are
general tools which allow to obtain dimension-free estimates, see
e.g.\ \cite{Ste1} and \cite[Section~4.1]{BMSW3},
\cite[Theorem~1.2]{MSZ1}.

In order to understand the Fourier transform estimates of the kernel $g_t$ at the origin and at infinity, we will use two representations for $\mathcal F_{\ZZ^d}g_t$. By definition the first representation has the following form
\begin{equation}
\label{eq:ft1}
\mathcal F_{\ZZ^d}{g}_t(\xi)=\frac{1}{\Theta_{1/t}(0)}\sum_{n\in \ZZ^d}\exp\bigg(- \frac{\pi|n|^2}{t}\bigg)
e(- n\cdot \xi),\qquad \xi \in \TT^d,
\end{equation}
where $e(z):=e^{2\pi i z}$ for every $z\in\CC$.
Formula \eqref{eq:ft1}  is particularly helpful when dealing with small scales,
specifically $0 < t \le 16$. This is because the exponential
$e^{-\pi/t}$ becomes very small for small values of $t$ and the series
in equation \eqref{eq:ft1} converges quickly for these values of $t$.

Considering $\varphi(y):=\exp(-\pi t |y|^2)$, $y\in \RR^d$, and 
applying the Poisson summation formula we obtain
\[
\sum_{n\in \ZZ^d}\varphi(n-\xi)
=
\sum_{n\in \ZZ^d}\mathcal F_{\RR^d} \varphi (n)
e(- n\cdot \xi),
\]
which is equivalent to the following formula
\begin{equation}
\label{eq:ps1}
\sum_{n\in \ZZ^d}\exp\bigg(- \frac{\pi|n|^2}{t}\bigg) e(- n\cdot \xi)
=
t^{d/2}\sum_{n\in \ZZ^d}\exp\big(- \pi|n-\xi|^2 t\big)
=
t^{d/2}\Theta_t(\xi).
\end{equation}
In particular, taking $\xi=0$ in the above formula we have 
\begin{equation}
\label{eq:ps2}
\Theta_{1/t}(0)
=
\sum_{n\in \ZZ^d}\exp\bigg(- \frac{\pi|n|^2}{t}\bigg)
=
t^{d/2}\sum_{n\in \ZZ^d}\exp\big(- \pi|n|^2 t\big)=t^{d/2}\Theta_t(0).
\end{equation}
Hence, coming back to \eqref{eq:ft1} we obtain the second representation given by the formula
\begin{equation}
\label{eq:ft2}
\mathcal F_{\ZZ^d}{g}_t(\xi)
=
\frac{\Theta_t(\xi)}{\Theta_t(0)},
\qquad \xi \in \TT^d,
\end{equation} 
which obviously tensorizes as
\begin{equation}
	\label{eq:ft3}
	\mathcal F_{\ZZ^d}{g}_t(\xi)=\prod_{j\in[d]}\frac{\theta_t(\xi_j)}{\theta_t(0)},\qquad \xi=(\xi_1,\ldots, \xi_d)\in\TT^d.
\end{equation}
Formula \eqref{eq:ft2}, in contrast to \eqref{eq:ft1}, is useful
when dealing with large scales, let us say $t \ge 16$. This is because
the the factor $e^{-\pi t}$ is very small and the series defining
$\Theta_t$, see \eqref{eq:Theta_def}, converges very quickly for these
values of $t$.  Therefore in the proof of Theorem~\ref{thm:main}, as
well when showing various estimates for
$\mathcal F_{\ZZ^d}{g}_t(\xi)$, we naturally split our investigations
into two cases of small scales ($t \in (0,16)$), see
Section~\ref{sec:Festlarg}, and of large scales ($t \in [16,\infty)$),
see Section~\ref{sec:Festsmall}.  Despite the fact that we will
primarily use formula \eqref{eq:ft1} for small scales and formula
\eqref{eq:ft2} for large scales, it is worth noting that both formulas
\eqref{eq:ft1} and \eqref{eq:ft2} are useful for the whole region
$t>0$.
For instance, from \eqref{eq:ft2} it is not immediately clear
that $|\mathcal F_{\ZZ^d}{g}_t(\xi)|\leq 1,$ which is obvious from
\eqref{eq:ft1}. On the other hand \eqref{eq:ft2} clearly shows that
$\mathcal F_{\ZZ^d}{g}_t(\xi)\ge 0,$ contrary to \eqref{eq:ft1}.

It should be noted that even though $G_t$ is not a semigroup, the
Fourier transform $\mathcal F_{\ZZ^d}{g}_t(\xi)$ is closely related to the
heat kernel on $\TT^d$. More precisely,
\begin{equation}
\label{HKT}
H_t(\xi)
:= 
(4\pi t)^{-d/2} \Theta_{1/(4\pi t)} (\xi)
=
(4\pi t)^{-d/2}\sum_{n\in \ZZ^d}\exp\bigg(- \frac{|n-\xi|^2}{4t}\bigg),
\qquad t>0,\quad \xi\in\TT^d,
\end{equation} 
is the heat kernel on $\TT^d$.  Our estimates for
$\mathcal F_{\ZZ^d}{g}_t(\xi)$ as outlined in
Propositions~\ref{pro:est_inf} and \ref{pro:1} appear to be new when
considering the dependence on the dimension $d\in\ZZ_+$. These
estimates also directly lead to what are presumably novel estimates
for the heat kernel $H_t(\xi)$. We present the following result, which
will be justified in Appendix \ref{sec:app}.

\begin{proposition}
	\label{pro:hTd}
	For each $t\in(0,2^{-8})$ we have
	\begin{align} \label{HKT1}
	H_t(0) \exp\bigg(-\frac{|\xi|^2}{4t}\bigg)  
	\le 
	H_t(\xi) \le H_t(0) \exp\bigg(-\frac{|\xi|^2}{40t}\bigg),
	\qquad \xi \in \TT^d.
\end{align}
	Additionally,  for $t \ge 2^{-8}$ and $\xi \in \TT^d$ we have
	\begin{align} \label{HKT2}
		\begin{split}
& H_t(0) \exp\bigg(- 256 \Big( \pi^3 + 1024 \pi e^{-\pi/(64)} \Big)  e^{-4\pi^2 t}|\xi|^2\bigg) \\
& \qquad \qquad \qquad \qquad 	\le 
	H_t(\xi) 
	\le 
	H_t(0) \exp\bigg(- \frac{16}{1 + 4\sqrt{2}} e^{-4\pi^2 t}|\xi|^2\bigg).
	\end{split}
\end{align}
\end{proposition}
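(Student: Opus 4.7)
My plan begins with the identity
\begin{equation*}
\frac{H_t(\xi)}{H_t(0)} = \frac{\Theta_{1/(4\pi t)}(\xi)}{\Theta_{1/(4\pi t)}(0)} = \mathcal{F}_{\ZZ^d} g_{1/(4\pi t)}(\xi),
\end{equation*}
which follows from \eqref{HKT} and \eqref{eq:ft2}: the heat-kernel ratio is literally the Fourier transform of the discrete Gaussian at the dual scale $s := 1/(4\pi t)$. The two cases of the proposition correspond exactly to $s > 2^8/(4\pi) > 16$ (large $s$) and $s \le 2^8/(4\pi)$ (moderate $s$), which are the two regimes analyzed for $\mathcal{F}_{\ZZ^d} g_s$ in Sections \ref{sec:Festlarg} and \ref{sec:Festsmall}. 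By the tensorization \eqref{eq:ft3} and the product structure $\exp(-c|\xi|^2) = \prod_j \exp(-c \xi_j^2)$ of the target bounds, everything reduces to a one-dimensional two-sided estimate for $\theta_s(\zeta)/\theta_s(0)$ on $\zeta \in \TT$.

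For $t\in(0,2^{-8})$ I use the rapidly convergent spatial series $\theta_s(\zeta) = \sum_{k\in\ZZ} e^{-\pi s(k-\zeta)^2}$. Factoring out $e^{-\pi s\zeta^2}$ and using the $k\leftrightarrow -k$ symmetry gives
\begin{equation*}
\frac{\theta_s(\zeta)}{\theta_s(0)} = e^{-\pi s\zeta^2}\cdot \frac{1 + 2\sum_{m\ge 1} e^{-\pi s m^2}\cosh(2\pi s m\zeta)}{1 + 2\sum_{m\ge 1} e^{-\pi s m^2}},
\end{equation*}
so $\cosh \ge 1$ yields the lower bound $e^{-\zeta^2/(4t)}$ at once. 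For the upper bound I must show the second factor is $\le e^{9\pi s\zeta^2/10}$, which then produces the claimed $e^{-\zeta^2/(40t)}$. The $m\ge 2$ tail contributes only $O(\zeta^2 e^{-2\pi s})$ (via $\sinh^2(x)\le x^2\cosh^2(x)$), and the $m=1$ term equals exactly $e^{-\pi s(1-2|\zeta|)} + e^{-\pi s(1+2|\zeta|)} - 2e^{-\pi s}$. A case split on $|\zeta|\le 1/4$ (where a Taylor bound makes the $m=1$ contribution $O(s^2\zeta^2 e^{-\pi s/2})$, easily absorbed) and $|\zeta|\in[1/4,1/2]$ (where the target $e^{9\pi s\zeta^2/10}\ge e^{9\pi s/160}$ is enormous once $s > 64/\pi$) closes the estimate.

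For $t\ge 2^{-8}$ the spatial series converges slowly, so I switch to the Poisson-dual form derived from \eqref{eq:ps1}, namely $h_t(\zeta) := 1 + 2\sum_{k\ge 1} e^{-4\pi^2 t k^2}\cos(2\pi k\zeta)$. For the upper bound, retaining only the $k=1$ term in $h_t(0) - h_t(\zeta)$ together with $1-\cos(2\pi\zeta)\ge 8\zeta^2$ (from $|\sin(\pi\zeta)|\ge 2|\zeta|$ on $[-1/2,1/2]$) yields $h_t(0) - h_t(\zeta) \ge 16\, e^{-4\pi^2 t}\zeta^2$; combined with the uniform bound $h_t(0)\le 1 + 4\sqrt 2$ for $t\ge 2^{-8}$ obtained by a direct geometric-series estimate on $\sum_{k\ge 1} e^{-4\pi^2 t k^2}$, and the inequality $1 - x \le e^{-x}$, this produces the stated constant $16/(1+4\sqrt 2)$. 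For the lower bound, the termwise estimate $1-\cos\theta \le \theta^2/2$ gives $h_t(0) - h_t(\zeta) \le 4\pi^2\zeta^2 \sum_{k\ge 1} k^2 e^{-4\pi^2 t k^2}$, and an explicit geometric-series estimate of this sum uniform in $t\ge 2^{-8}$, paired with a suitable logarithmic inequality bounding $-\log(h_t(\zeta)/h_t(0))$ uniformly in $\zeta \in [-1/2, 1/2]$, produces the large but explicit constant $256(\pi^3 + 1024\pi e^{-\pi/64})$. Tensorization across $\xi_1, \ldots, \xi_d$ then lifts the one-dimensional bounds to the full $d$-dimensional Proposition \ref{pro:hTd}.

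The main technical obstacle is the upper bound in the small-$t$ case: near $\zeta = 0$ the leading-order Taylor analysis already yields the much sharper exponent $e^{-\zeta^2/(4t)}$, but near $|\zeta|=1/2$ the $m=1$ spatial copy of the Gaussian becomes comparable to the central one and that sharp bound is unavailable. The factor-of-ten slack built into the claimed exponent is precisely what permits the two-regime analysis above to absorb both effects simultaneously.
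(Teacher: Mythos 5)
Your overall architecture matches the paper's: the identity $H_t(\xi)/H_t(0)=\mathcal F_{\ZZ^d}g_{1/(4\pi t)}(\xi)$, tensorization to $d=1$, and the split into the regimes $s:=1/(4\pi t)>64/\pi$ (reproving Proposition~\ref{pro:est_inf}/Corollary~\ref{cor:1} for \eqref{HKT1}) and $s\le 64/\pi$ (the paper's Proposition~\ref{pro:1mod} with $D=32$, for \eqref{HKT2}). The \eqref{HKT1} part of your argument and the \eqref{HKT2} upper bound are essentially the paper's, though for $h_t(0)\le 1+4\sqrt2$ you should use an integral comparison $\sum_{k\ge1}e^{-4\pi^2tk^2}\le\frac12\int_0^\infty e^{-4\pi^2tx^2}\,dx$ (i.e.\ Lemma~\ref{lem:1} in disguise), not a literal geometric series with ratio $e^{-4\pi^2t}$, which is too lossy and gives roughly $h_t(0)\lesssim 13$ at $t=2^{-8}$.

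The genuine gap is in the \eqref{HKT2} \emph{lower} bound. Your plan is: bound $h_t(0)-h_t(\zeta)\le 4\pi^2\zeta^2\sum_{k\ge1}k^2e^{-4\pi^2tk^2}$ and then apply ``a suitable logarithmic inequality.'' But the quantity $1-h_t(\zeta)/h_t(0)$ is \emph{not} uniformly bounded away from $1$ on the whole range $t\in[2^{-8},\infty)$, $\zeta\in\TT$: near $t=2^{-8}$ and $\zeta=\pm1/2$ the relevant coefficient $4\pi^2e^{-4\pi^2t}\bigl(1+\frac{1024\,e^{-\pi/64}}{\pi^2}\bigr)$ exceeds $1/2$, so an estimate of the form $1-x\ge e^{-Cx}$ with a fixed $C$ is simply unavailable. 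This is exactly why the paper's proof of Proposition~\ref{pro:1mod} has a two-case structure: when the coefficient is $\le 1/2$ it uses $1-x\ge e^{-2x}$ (producing the $8\pi^2(\cdot)$ constant), and otherwise it falls back on Lemma~\ref{lem:4}'s unconditional lower bound $\theta_s(\zeta)/\theta_s(0)\ge e^{-\pi s\zeta^2}$ and then rewrites $\pi s=\pi s e^{\pi/s}\cdot e^{-\pi/s}$ to reintroduce the factor $e^{-\pi/s}=e^{-4\pi^2 t}$; the supremum of $\pi se^{\pi/s}$ over the critical $s$-range yields the $8\pi^3 D(\cdot)$ term, and that second term is the binding one — it is precisely where the $\pi^3$ and the $1024\pi e^{-\pi/64}$ in your stated constant $256(\pi^3+1024\pi e^{-\pi/64})$ come from. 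Your outline, relying solely on the Poisson-dual series $h_t$ and a single logarithmic inequality, cannot reproduce that constant; you need to go back to the spatial form $\theta_s$ and invoke Lemma~\ref{lem:4} for the second case.
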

\noindent We remark that it is straightforward to obtain the above estimates with a multiplicative factor in front of the form $C^d H_t(0)$ for some $C = C(d)>1$. The fact that we are able to replace this factor by $H_t(0)$ may be of particular interest when the dimension $d$ tends to infinity.

\medskip
\noindent{\textbf{Acknowledgments}.}
The authors are grateful to Adam Nowak for pointing out the reference~\cite{Bod}.

\section{Notation}\label{sec:not}
We now set up notation  that will be used throughout the paper.
\subsection{Basic notation}
The set of nonnegative integers and positive integers will be denoted
respectively by $\NN:=\{0,1,2,\ldots\}$ and
$ \ZZ_+:=\{1, 2, \ldots\}$.  For $d\in \ZZ_+$ the sets $\ZZ^d$,
$\RR^d$, $\CC^d$ and $\TT^d:=\RR^d/\ZZ^d$ have standard meaning.  For
any $x\in\RR$ we define the floor function
$\lfloor x \rfloor: = \max\{ n \in \ZZ : n \le x \}$ and 
the fractional part 
$\{x\}:=x-\lfloor x\rfloor$ as well as the distance to the nearest integer
by $\|x\|:={\rm dist}(x, \ZZ)$. For $x, y\in\RR$ we shall also write
$x \vee y := \max\{x,y\}$ and $x \wedge y := \min\{x,y\}$.

For
any real number $N>0$ we define
\[
[N]:=\ZZ_+\cap(0, N]=\{1, 2,\ldots, \lfloor N \rfloor\},
\]
and we will also write
\begin{align*}
\NN_{\le N}:= [0, N]\cap\NN,\ \: \quad &\text{ and } \quad
\NN_{< N}:= [0, N)\cap\NN,\\
\NN_{\ge N}:= [N, \infty)\cap\NN, \quad &\text{ and } \quad
\NN_{> N}:= (N, \infty)\cap\NN.
\end{align*}

The letter $d\in \NN$ is reserved for the dimension throughout this
paper and all implied constants, unless otherwise specified, will be
independent of the dimension.  For two nonnegative quantities $A, B$
we write $A \lesssim_{\delta} B$ if there is an absolute constant
$C_{\delta}>0$ depending on $\delta>0$ such that $A\le C_{\delta}B$ .
We will write $A \simeq_{\delta} B$ when
$A \lesssim_{\delta} B\lesssim_{\delta} A$. We will omit the subscript
$\delta$ if irrelevant.

We use $\ind{A}$ to denote the indicator function of a set $A$. If $S$ 
is a statement we write $\ind{S}$ to denote its indicator, equal to $1$
if $S$ is true and $0$ if $S$ is false. For instance $\ind{x\in A} = \ind{A}(x)$.

\subsection{Euclidean spaces}
The standard inner product on $\RR^d$ is denoted by
$x\cdot\xi:=\sum_{k\in [d]}x_k\xi_k$ for every $x=(x_1,\ldots, x_d)$
and $\xi=(\xi_1, \ldots, \xi_d)\in\RR^d$. The inner product induces
the Euclidean norm $|x|_2:=\sqrt{x\cdot x}$, which will be abbreviated
to $|x|$. We will also consider $\RR^d$ with the following norms
\begin{align}
\label{eq:1}
|x|_q:=
\begin{cases}
\big(\sum_{k\in[d]}|x_k|^q\big)^{1/q}& \text{ if } \quad q\in[1, \infty),\\
\ \max_{k\in[d]}|x_k| & \text{ if } \quad q=\infty.
\end{cases}
\end{align}
One of the most classical examples of convex symmetric bodies are the \(q\)-balls 
\(B^q (d) \subseteq \mathbb{R}^d\), \(q \in [1, \infty]\), defined  by
\begin{align}
	\label{eq:1a}
B^q (d) :=
\{
x \in \mathbb{R}^d : |x|_q 
\leq 1
\}.
\end{align}

The $d$-dimensional torus $\TT^d:=\RR^d/\ZZ^d$ is a priori endowed
with the periodic norm
\[
\|\xi\|:=\Big(\sum_{k=1}^d \|\xi_k\|^2\Big)^{1/2}
\qquad \text{for}\qquad
\xi=(\xi_1,\ldots,\xi_d)\in\TT^d,
\]
where $\|\xi_k\|={\rm dist}(\xi_k, \ZZ)$ for all $\xi_k\in\TT$ and
$k\in[d]$.  Throughout the paper we will identify $\TT^d$ with
$[-1/2, 1/2)^d$, and we will abbreviate $\|\cdot\|$ to $|\cdot|$,
since the norm $\|\cdot\|$ coincides with the Euclidean norm $|\cdot|$
restricted to $[-1/2, 1/2)^d$.

\subsection{Function spaces and derivatives} All vector spaces in this paper will be
defined over the complex numbers $\CC$. The triple
$(X, \mathcal B(X), \mu)$ is a measure space $X$ with $\sigma$-algebra
$\mathcal B(X)$ and $\sigma$-finite measure $\mu$.  The space of all
measurable functions whose modulus is integrable with $p$-th power is
denoted by $L^p(X)$ for $p\in(0, \infty)$, whereas $L^{\infty}(X)$
denotes the space of all essentially bounded measurable functions.
The Lorentz space $L^{p, \infty}(X)$  (or weak type $L^{p}(X)$ space) for $0<p<\infty$ is the set of all
measurable functions $f:X\to \CC$ such that
\begin{align}
\label{eq:2}
\|f\|_{L^{p, \infty}(X)}=\inf\{C>0: \mu(\{x\in X: |f(x)|\ge \lambda\})\le C^p\lambda^{-p} \ \text{ for all } \lambda>0\}<\infty.
\end{align}
The quantity from \eqref{eq:2} defines a seminorm on
$L^{p, \infty}(X)$. If $p=\infty$ we define
$L^{\infty, \infty}(X)=L^{\infty}(X)$.  In our case we will usually
have $X=\RR^d$ or $X=\TT^d$ equipped with the Lebesgue measure, and
$X=\ZZ^d$ endowed with the counting measure. If $X$ is endowed with
counting measure we will abbreviate $L^p(X)$ and $L^{p, \infty}(X)$ to
$\ell^p(X)$ and $\ell^{p,\infty}(X)$ respectively. We will also
abbreviate $\|\cdot\|_{\ell^p(\ZZ^d)}$ to $\|\cdot\|_{\ell^p}$.

For $T \colon B_1 \to B_2$, a continuous linear  map between two normed
vector spaces $B_1$ and  $B_2$, we use $\|T\|_{B_1 \to B_2}$ to denote its
operator norm. This will be usually used with $B_1=B_2=\ell^p(\ZZ^d)$.

The partial derivative of a function $f \colon \RR^d\to\CC$ with respect to
the $j$-th variable $x_j$ will be denoted by
$\partial_{x_j}f=\partial_j f$, while the $m$-fold partial derivative
will be denoted by
$\partial_{x_j}^mf=\partial_j^m f$. Let $U, V\subseteq \RR$ be open intervals and let $f \colon V\to \RR$  and $g \colon U\to V$ be two   smooth functions, we will also use the Fa\'a di
Bruno's formula for the $n$-th derivative of $f\circ g$, which reads as follows
\begin{align}
\label{eq:3}
(f\circ g)^{(n)}(x)
=
\sum_{(m_1,\ldots, m_{n})\in S(n)} \frac{n!}{m_1! \cdots m_{n}!} 
f^{(m_1+\cdots + m_{n})} (g(x))\prod_{j\in[n]}\bigg(\frac{ g^{(j)}(x)}{j!}\bigg)^{m_j},
\end{align}
where $S(n):=\{(m_1,\ldots, m_{n})\in\NN^n: \sum_{j\in[n]} j m_j=n\}$.

For any open interval $U\subseteq \RR$ and any smooth function
$f \colon U \to \mathbb{C}$ such that $f'(t) \ne 0$ for all $t \in U$,
we will also use the formula \cite[(21)]{Bod} for the higher order
derivatives of the inverse function
\begin{align} \label{id:104}
(f^{-1})^{(n)} \big(f(t)\big)
=
\frac{(-1)^{n-1}}{(f'(t))^{2n-1}}
\sum_{s \in T(n)} 
\frac{(-1)^{s_1} (2n-s_1 - 2)!}{s_2 ! \cdot \ldots \cdot s_n !}
\prod_{j=1}^n \Big(\frac{f^{(j)}(t)}{j!}\Big)^{s_j}, \qquad n \in \mathbb{Z}_+,
\end{align}
where
$T(n) := \{s=(s_1,\ldots,s_n) \in \mathbb{N}^n : \sum_{j\in[n]} s_j = n-1 \, \text{
and } \, \sum_{j\in[n]} j s_j = 2(n-1)\}$. For $n=1$, we understand
that the sum in \eqref{id:104} is constantly equal to $1$.  Further,
for any $n \ge 2$ note that $T(n)\neq\emptyset$.

\subsection{Fourier transform}  
We will write $e(z):=e^{2\pi i z}$ for every $z\in\CC$.
The Fourier transform and the inverse Fourier transform of $f\in L^1(\RR^d)$ will be denoted respectively by
\begin{align*}
\mathcal F_{\RR^d} f(\xi) &:= \int_{\RR^d} f(x) e(-x\cdot \xi)\ dx,\qquad \xi\in\RR^d,\\
 \mathcal F_{\RR^d}^{-1} f(x) &:= \int_{\RR^d} f(\xi) e(x\cdot \xi)\ d\xi,\qquad x\in\RR^d.
\end{align*}
Similarly, the Fourier series of $f\in \ell^1(\ZZ^d)$, 
and the Fourier coefficient of $g\in L^1(\TT^d)$  will be denoted respectively by
\begin{align*}
	\mathcal F_{\ZZ^d} f(\xi) &:= 
	\sum_{n\in\ZZ^d} f(n) e(n\cdot \xi),\qquad \xi\in\TT^d,\\
	\mathcal F_{\ZZ^d}^{-1} g(n) &:= 
	\int_{\TT^d} g(\xi) e(-n\cdot \xi) \, d\xi,\qquad n\in\ZZ^d.
\end{align*}

 \subsection{Jumps, $r$-variations and $r$-oscillations} \label{sec:var}
 For any $\mathbb I\subseteq (0,\infty)$, any family
 $(\mathfrak a_t: t\in\mathbb I)\subseteq \CC$, and any exponent
 $1 \leq r < \infty$, the $r$-variation semi-norm is defined to be
\begin{align*}
V^{r}(\mathfrak a_t: t\in\mathbb I):=
\sup_{J\in\ZZ_+} \sup_{\substack{t_{0}<\dotsb<t_{J}\\ t_{j}\in\mathbb I}}
\Big(\sum_{j=0}^{J-1}  |\mathfrak a_{t_{j+1}}-\mathfrak a_{t_{j}}|^{r} \Big)^{1/r},
\end{align*}
where the latter supremum is taken over all finite increasing
sequences in $\mathbb I$. For any $t_0\in \mathbb I$ one has
\begin{align}
\label{eq:688}
\sup_{t\in \mathbb I}|\mathfrak{a}_t|\le |\mathfrak{a}_{t_0}|+V^r(\mathfrak{a}_t: t\in \mathbb I).
\end{align} 
 
The $r$-variation is closely related to the $\lambda$-jump counting
function. Recall that for any $\lambda>0$ the $\lambda$-jump counting
function of a family $(\mathfrak{a}_t: t\in\mathbb I)\subseteq \CC$ is
defined by
\begin{align*}
	N_\lambda(\mathfrak{a}_t : t\in \mathbb I):=\sup \{ J\in\NN : \exists_{\substack{t_{0}<\ldots<t_{J}\\ t_{j}\in\mathbb I}}  : \min_{0 \le j \le J-1} |\mathfrak{a}_{t_{j+1}} - \mathfrak{a}_{t_{j}}| \ge \lambda \}.
\end{align*}

Let now $(\mathfrak{a}_t(x): t\in \mathbb I)$ be a family of measurable functions on a $\sigma$-finite measure space $(X,\calB(X),\mu)$. Let $\mathbb I \subseteq (0,\infty)$ and $\#\mathbb I\geq2$, then for every $p\in[1,\infty]$ and $r\in[1,\infty)$ we have 
\begin{align}
\label{eq:725}
\sup_{\lambda>0}\|\lambda N_{\lambda}(\mathfrak{a}_t: t\in\mathbb I)^{1/r}\|_{L^p(X)}\le \|V^r(\mathfrak{a}_t: t\in \mathbb I)\|_{L^p(X)},
\end{align}
since for all $\lambda>0$ we have the following simple pointwise estimate
\[
\lambda N_{\lambda}(\mathfrak{a}_t(x): t\in \mathbb I)^{1/r}\le V^r(\mathfrak{a}_t(x): t\in \mathbb I), 
\qquad x \in X.
\]
It is interesting that inequality \eqref{eq:725} can be inversed in the following sense.
Let us fix $p \in [1,\infty]$ and $r>2$. Then for every family of measurable functions
$(\mathfrak{a}_t(x): t\in \mathbb I)$ 
we have the estimate
\begin{align} \label{estt1}
	\big\| V^{r}\big( \mathfrak{a}_t : t \in \mathbb I \big) \big\|_{L^{p,\infty} (X)}
	\lesssim_{p,r}
	\sup_{\lambda>0} \big\| \lambda N_{\lambda}(\mathfrak{a}_t: t\in \mathbb I)^{1/2} \big\|_{L^{p,\8}(X)}.
\end{align}
 A proof of \eqref{estt1} can be found in \cite[Lemma 2.3, p.\ 805]{MSZ1}. This estimate says that $\lambda$-jumps are an endpoint for $r$-variations.
 
 Let
 $\mathbb I\subseteq (0,\infty)$ be  such that $\# \II\ge2$ and for
 every $J\in\ZZ_+$ define the set
 \begin{align*}
	\mathfrak S_J(\II):
=
\big\{ (I_{0}, \ldots, I_{J}) \subseteq \II^{J+1} : 
I_{0} < I_{1} < \ldots < I_{J} \big\}.
 \end{align*}
 In other
 words, $\mathfrak S_J(\II)$ is the family of all strictly increasing
 sequences of length $J+1$
 taking their values in the set $\II$.
 
 For any $\mathbb I\subseteq (0,\infty)$, any family
 $(\mathfrak a_t: t\in\mathbb I)\subseteq \CC$, any exponent
 $1 \leq r < \infty$, any $J\in\ZZ_+$ and a sequence
 $I \in \mathfrak S_J(\II)$, the $r$-oscillation semi-norm is defined to be
\begin{align*}
	O_{I, J}^r(\mathfrak a_{t} : t \in \II):=
	\Big(\sum_{j=0}^{J-1}\sup_{t\in [I_j, I_{j+1}) \cap\II}
	\abs{\mathfrak a_{t} - \mathfrak a_{I_j} }^r\Big)^{1/r}.
\end{align*}
Here we use the
convention that the supremum taken over the empty set is zero.  Note that
for any $\II\subseteq (0,\infty)$,
any $r\in [1,\infty)$, any family $(\mathfrak a_t:t\in\II)\subseteq \CC$, any  $J\in\ZZ_+$, and any  $I\in\mathfrak S_J(\II)$  one has
\begin{align}
	\label{eq:62}
	O_{I, J}^r(\mathfrak a_{t}: t \in \II)\le V^r(\mathfrak{a}_t: t\in \II)\le 2\Big(\sum_{t\in\II}|\mathfrak a_{t}|^r\Big)^{1/r},
\end{align}
the latter object being usually $\infty$ if $\II$ is uncountable.
Consequently, if now $\mathbb I \subseteq (0,\infty)$ with $\#\mathbb I\geq2$, and 
$(\mathfrak{a}_t(x): t\in \mathbb I)$ be a family of measurable functions on a $\sigma$-finite measure space $(X,\calB(X),\mu)$, then for every $p\in[1,\infty]$ and $r\in[1,\infty)$ we have 
\begin{align}
	\label{eq:7255}
	\sup_{J\in\ZZ_+}\sup_{I\in \mathfrak{S}_J(\II)} 	
	\big\| O_{I, J}^{r}
	(\mathfrak{a}_t: t\in\mathbb I)^{1/r}\|_{L^p(X)}\le \|V^r(\mathfrak{a}_t: t\in \mathbb I)\|_{L^p(X)}.
\end{align}
Despite the similarity of \eqref{eq:725} and \eqref{eq:7255}, it was proved in \cite[Theorem~1.2]{MiSlSz} that $2$-oscillations cannot be interpreted as an endpoint for $r$-variations
when $r > 2$, in the sense of inequality \eqref{estt1}. Actually much weaker estimate than an analogue of \eqref{estt1} fails to be true, see \cite[Theorem 1.2]{MiSlSz}.

 For further details related to $\lambda$-jump function, $r$-variation and $r$-oscillation semi-norms we refer the interested reader to a survey article \cite{MSW}.

\section{Fourier transform estimates at the origin} \label{sec:Fouest}

We begin with straightforward estimates for the one-dimensional multiplier $\theta_t.$
\begin{lemma}
\label{lem:1}
For all $t>0$ we have
\[
\max\big(1,t^{-1/2}\big)\le \theta_t(0)\le 1+t^{-1/2}.
\]
\end{lemma}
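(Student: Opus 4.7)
The plan is to split the inequality into an upper bound and two lower bounds, all of which follow quickly from basic manipulations of the series defining $\theta_t(0)$ together with the one-dimensional Poisson summation identity already derived in \eqref{eq:ps2}.

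First I would write $\theta_t(0) = 1 + 2\sum_{k=1}^{\infty} \exp(-\pi k^2 t)$. For the upper bound, since the function $x \mapsto \exp(-\pi x^2 t)$ is decreasing on $[0,\infty)$, the standard integral test gives
\[
\sum_{k=1}^{\infty} \exp(-\pi k^2 t) \le \int_0^{\infty} \exp(-\pi x^2 t)\, dx = \frac{1}{2\sqrt{t}},
\]
so $\theta_t(0) \le 1 + t^{-1/2}$, as required.

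For the lower bound, the estimate $\theta_t(0) \ge 1$ is immediate by dropping all terms with $k \ne 0$ from the defining series and using positivity. For the bound $\theta_t(0) \ge t^{-1/2}$, I would invoke the one-dimensional version of the Poisson summation identity \eqref{eq:ps2} (i.e.\ specialized to $d=1$ and $\xi=0$), which reads
\[
\theta_{1/t}(0) = t^{1/2} \theta_t(0), \qquad \text{equivalently} \qquad \theta_t(0) = t^{-1/2}\, \theta_{1/t}(0).
\]
Applying the trivial bound $\theta_{1/t}(0)\ge 1$ on the right-hand side yields $\theta_t(0)\ge t^{-1/2}$. Taking the maximum with the previous bound gives $\theta_t(0) \ge \max(1, t^{-1/2})$.

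There is no substantive obstacle: the only ingredients are monotonicity of $x\mapsto\exp(-\pi x^2 t)$, the elementary Gaussian integral, and the functional equation for $\theta_t(0)$, which has already been established in the preceding discussion. The argument is essentially a one-paragraph verification.
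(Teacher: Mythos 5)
Your proof is correct and follows essentially the same route as the paper's: the integral test for the upper bound, the trivial bound $\theta_t(0)\ge 1$, and the $d=1$, $\xi=0$ case of the Poisson summation identity \eqref{eq:ps2} for the bound $\theta_t(0)\ge t^{-1/2}$. Nothing further is needed.
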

\begin{proof}
A simple comparison shows that
\[
\sum_{n\in\ZZ_+}e^{-\pi t n^2}\le \int_0^{\infty}e^{-\pi t x^2}\,dx=\frac{1}{2\sqrt{t}}.
\]
Using $\theta_t(0)=1+2\sum_{n\in\ZZ_+}e^{-\pi t n^2}$ we obtain
 the desired upper bound $\theta_t(0)\le 1+t^{-1/2}$.
Further, it is clear that $\theta_t(0)\ge 1$. Next, by \eqref{eq:ps2}
for $d=1$, we also have
$\theta_t(0)=t^{-1/2}\theta_{1/t}(0)\ge t^{-1/2}$. Combining these two
estimates we obtain the desired lower bound and the proof of
Lemma~\ref{lem:1} is finished.
\end{proof}

For $x>0$ we have two simple  inequalities 
\begin{align} \label{id:101}
xe^{-x} \le 
e^{-x/2}, \qquad \text{ and }\qquad
\frac{x e^{-x}}{1-e^{-x}} \le  e^{-x/2}.
\end{align}

The main result of this section is the following estimate for
$\mathcal F_{\ZZ^d}g_t$ at the origin.

\begin{proposition}
\label{pro:est_0}
For each $d\in \NN$, $\xi\in \TT^d$ and $t>0$ we have
\begin{align}
\label{id:105}
\big|\mathcal F_{\ZZ^d}g_t(\xi)-1\big| \le 6 \pi t |\xi|^2.
\end{align}	
Further, for any fixed $D > 0$, we have a more precise estimate
\begin{align}
\label{id:106}
\big|\mathcal F_{\ZZ^d}g_t(\xi)-1\big| \lesssim_D e^{-\pi/t} |\xi|^2,
\end{align}	
which holds uniformly in $d\in \NN$, $\xi\in \TT^d$ and $t \le D$ with
the implicit constant depending only on $D$. Notice that
$e^{-\pi/t} \le (\pi e)^{-1} t$ for all $t>0$, thus \eqref{id:106} implies \eqref{id:105} for $t\le D$.
	
\end{proposition}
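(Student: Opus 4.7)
The plan is to exploit the tensorization \eqref{eq:ft3}, which writes $\mathcal F_{\ZZ^d} g_t(\xi) = \prod_{j\in[d]} m_t(\xi_j)$ with the one-dimensional multiplier $m_t(\eta) := \theta_t(\eta)/\theta_t(0)$. Since $g_t \ge 0$ and $\|g_t\|_{\ell^1}=1$, one has $0 \le m_t(\eta) \le 1$ on $\TT$, and the standard telescoping identity gives
\begin{align*}
\bigl|\mathcal F_{\ZZ^d}g_t(\xi) - 1\bigr| \le \sum_{j\in[d]} |m_t(\xi_j) - 1|.
\end{align*}
Hence both estimates reduce to producing a one-dimensional bound of the form $|m_t(\eta) - 1| \le C \alpha(t) \eta^2$, which upon summation over $j\in[d]$ yields $C\alpha(t)|\xi|^2$, with constants that are automatically dimension-free.

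Next I would express $m_t(\eta) - 1$ via formula \eqref{eq:ft1} in the one-dimensional case. The symmetry $n\leftrightarrow -n$ kills the imaginary part, leaving
\begin{align*}
m_t(\eta) - 1 = -\frac{2}{\theta_{1/t}(0)}\sum_{n\in\ZZ} e^{-\pi n^2/t}\sin^2(\pi n \eta),
\end{align*}
and the elementary bound $|\sin(\pi n\eta)| \le \pi|n\eta|$ yields
\begin{align*}
|m_t(\eta) - 1| \le \frac{2\pi^2 \eta^2}{\theta_{1/t}(0)} \sum_{n\in\ZZ} n^2 e^{-\pi n^2/t},
\end{align*}
which serves as the common starting point for both parts of the proposition.

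For the uniform bound \eqref{id:105}, I would apply Poisson summation to $g(x)=x^2 e^{-\pi x^2/t}$. A direct computation gives $\widehat g(\xi) = t^{1/2}\bigl(\tfrac{t}{2\pi} - t^2 \xi^2\bigr)e^{-\pi t \xi^2}$, whence
\begin{align*}
\sum_{n\in\ZZ} n^2 e^{-\pi n^2/t} = \frac{t^{3/2}}{2\pi}\theta_t(0) - t^{5/2}\sum_{n\in\ZZ} n^2 e^{-\pi t n^2} \le \frac{t^{3/2}}{2\pi}\theta_t(0).
\end{align*}
Combining this with $\theta_{1/t}(0) = t^{1/2}\theta_t(0)$ from \eqref{eq:ps2}, the ratio above is bounded by $t/(2\pi)$, so $|m_t(\eta)-1| \le \pi t\eta^2$. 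Summing over coordinates gives $|\mathcal F_{\ZZ^d} g_t(\xi)-1| \le \pi t|\xi|^2 \le 6\pi t|\xi|^2$, which is \eqref{id:105}.

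For the sharper estimate \eqref{id:106} on the range $t\le D$, I would bypass Poisson summation and instead factor the dominant term out of the series: for $|n|\ge 1$ and $t\le D$,
\begin{align*}
n^2 e^{-\pi n^2/t} = e^{-\pi/t} \cdot n^2 e^{-\pi(n^2-1)/t} \le e^{-\pi/t} \cdot n^2 e^{-\pi(n^2-1)/D},
\end{align*}
and the resulting series $\sum_{n\neq 0} n^2 e^{-\pi(n^2-1)/D} =: C_D$ is finite. Combined with $\theta_{1/t}(0) \ge 1$ from Lemma~\ref{lem:1}, this yields $|m_t(\eta) - 1| \lesssim_D e^{-\pi/t}\eta^2$, and summing finishes the proof. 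I do not anticipate a substantive obstacle: the only subtlety is keeping constants independent of $d$, which is automatic once the problem is reduced to one variable via telescoping.
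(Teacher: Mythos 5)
Your proposal is correct, and its skeleton coincides with the paper's: both reduce to one dimension via the telescoping inequality for products bounded by $1$ (the paper applies it to the cosine product inside the sum, you apply it to the one-dimensional multipliers $\theta_t(\xi_j)/\theta_t(0)$ — the same computation), both use \eqref{eq:ft1} with symmetry and $|\sin x|\le|x|$ to arrive at the common intermediate bound $2\pi^2|\xi|^2\,\theta_{1/t}(0)^{-1}\sum_{j\in\ZZ}j^2e^{-\pi j^2/t}$, and your treatment of \eqref{id:106} (factoring out $e^{-\pi/t}$ and summing the remaining rapidly convergent series for $t\le D$) is the same in substance as the paper's. The one genuine difference is the closing step for \eqref{id:105}: the paper bounds the sum termwise via $xe^{-x}\le e^{-x/2}$, compares with $\theta_{1/(2t)}(0)$, and invokes Lemma~\ref{lem:1}, which costs the factor $6\pi$; you instead apply Poisson summation to $x^2e^{-\pi x^2/t}$, obtaining the exact identity $\sum_{n\in\ZZ}n^2e^{-\pi n^2/t}=\frac{t^{3/2}}{2\pi}\theta_t(0)-t^{5/2}\sum_{n\in\ZZ}n^2e^{-\pi tn^2}$, and dropping the nonpositive correction together with $\theta_{1/t}(0)=t^{1/2}\theta_t(0)$ gives the sharper constant $\pi t|\xi|^2$ in place of $6\pi t|\xi|^2$. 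Your route is slightly less elementary (it requires the Fourier transform of $x^2e^{-\pi x^2/t}$) but yields a cleaner and quantitatively better bound; the paper's route is self-contained given \eqref{id:101} and Lemma~\ref{lem:1}. Both are valid.
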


\begin{proof}
Using the formula \eqref{eq:ft1} for $\mathcal F_{\ZZ^d}g_t$ and symmetry we write
 \[
 \mathcal F_{\ZZ^d}g_t(\xi) = \frac{1}{\Theta_{1/t}(0)}
 \sum_{n\in \ZZ^d}\exp\big(- \pi|n|^2{t^{-1}}\big)\prod_{k\in[d]} \cos (2\pi n_k \xi_k),
 \]
 so that
 \[
 \mathcal F_{\ZZ^d}g_t(\xi)-1 = \frac{1}{\Theta_{1/t}(0)}\sum_{n\in \ZZ^d}\exp\big(- \pi|n|^2{t^{-1}}\big)\Big[ \Big(\prod_{k\in[d]} \cos (2\pi n_k \xi_k)\Big)-1\Big].
 \]
 Now taking into account the inequalities
 \begin{align*}
 \Big|\prod_{k\in[d]} a_k -1\Big|
 &\leq
 \sum_{k\in [d]} |a_k-1|, \qquad \text{for } \quad |a_k|\le 1, \\
 |\sin x|&\le |x|, \qquad \text{for } \quad x \in \mathbb{R},
 \end{align*} 
and the formula $\cos 2x=1-2\sin^2 x$ together with the product structure  of $\Theta_{1/t}(0)$, see \eqref{eq:Theta_def},  we obtain
 \begin{align} \label{id:107}
 \begin{split}
 \left| \mathcal F_{\ZZ^d}g_t(\xi)-1\right| &\le \frac{2}{\Theta_{1/t}(0)} \sum_{n\in \ZZ^d}
\exp\big(- \pi|n|^2{t^{-1}}\big)\sum_{k\in[d]} \sin^2 (\pi n_k \xi_k) \\
 & \le
 \frac{2\pi^2}{\Theta_{1/t}(0)}\sum_{n\in \ZZ^d}\exp\big(- \pi|n|^2{t^{-1}}\big)\sum_{k\in[d]} n_k^2 \xi_k^2\\
 &=\frac{2\pi^2}{\Theta_{1/t}(0)}\sum_{k\in [d]} \xi_k^2 \sum_{n\in \ZZ^d}n_k^2 \exp\big(- \pi|n|^2{t^{-1}}\big)\\
 &= 2\pi^2 |\xi|^2 \theta_{1/t}(0)^{-1}\sum_{j\in \ZZ} j^2\exp\big(- \pi j^2 {t^{-1}}\big).
\end{split}
 \end{align}
The first inequality from \eqref{id:101} and Lemma~\ref{lem:1} imply
\begin{align*} 
\left|\mathcal F_{\ZZ^d}g_t (\xi) - 1\right|
&\leq 
2 \pi t |\xi|^2 \theta_{1/t}(0)^{-1}\sum_{j\in \ZZ} (\pi j^2t^{-1})\exp\big(- \pi j^2 {t^{-1}}\big)\\
&\le 
2 \pi t |\xi|^2 \frac{\theta_{1/(2t)}(0)}{\theta_{1/t}(0)}\\
&\le 2 \pi t|\xi|^2 \frac{1+(2t)^{1/2}}{\max(1,t^{1/2})}\\
&\le 6 \pi t |\xi|^2,
\end{align*}
which proves \eqref{id:105}. On the other hand, using once again \eqref{id:101} we obtain
\begin{align*}
\sum_{j\in \ZZ} j^2 \exp\big(- \pi j^2 {t^{-1}}\big)
\le	
2 \exp\big(- \pi {t^{-1}}\big)
+ 
2 \pi^{-1} t \sum_{j\ge 2}  \exp\big(- \pi j {t^{-1}}\big)
\simeq_D
\exp\big(- \pi {t^{-1}}\big), \qquad t \le D.
\end{align*}
This combined with \eqref{id:107} and Lemma~\ref{lem:1} yields \eqref{id:106} and 
the proof is completed.
\end{proof}

\section{Fourier transform estimates at infinity: large scales} \label{sec:Festlarg}

\begin{lemma}
\label{lem:2}
For $t>0$ and $\xi\in \TT$ we have
\begin{equation*}
\theta_t(\xi)\le \theta_{t/8}(0)\exp\big(-\pi t|\xi|^2/2\big)
\le 
\big(1+3t^{-1/2}\big)\exp\big(-\pi t|\xi|^2/2\big).
\end{equation*}
\end{lemma}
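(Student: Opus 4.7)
The plan is to reduce the lemma to a single one-variable pointwise inequality that splits the quadratic exponent $(k-\xi)^2$ into an $\xi^2$ part and a $k^2$ part. Concretely, I would prove
\[
(k-\xi)^2 \ge \frac{\xi^2}{2} + \frac{k^2}{8}, \qquad k\in \ZZ,\ \xi\in[-1/2,1/2],
\]
which is exactly the decomposition needed to factor the Theta series. Once this inequality is in place, summation is immediate:
\[
\theta_t(\xi)=\sum_{k\in\ZZ}e^{-\pi t(k-\xi)^2}\le e^{-\pi t|\xi|^2/2}\sum_{k\in\ZZ}e^{-\pi(t/8)k^2}= e^{-\pi t|\xi|^2/2}\,\theta_{t/8}(0),
\]
which is the first claimed estimate (recall that on $\TT\simeq[-1/2,1/2)$ the quantity $|\xi|$ is the distance to the nearest integer, so $|\xi|\le 1/2$).

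To verify the pointwise inequality, by the symmetry $k\mapsto -k$, $\xi\mapsto-\xi$ I would assume $\xi\in[0,1/2]$. Rearranging, the inequality becomes
\[
Q(k):=\frac{7}{8}k^2-2k\xi+\frac{\xi^2}{2}\ge 0.
\]
The quadratic $Q$ has discriminant $4\xi^2-4\cdot(7/8)\cdot(\xi^2/2)=9\xi^2/4$, so its two real roots are $k_-=2\xi/7$ and $k_+=2\xi$. Thus $Q(k)<0$ precisely on the open interval $(2\xi/7,2\xi)\subseteq(0,1)$, and since $\xi\le 1/2$ this interval contains no integers (the endpoint $k=1$ is only attained when $\xi=1/2$, and there $Q(1)=0$). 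Hence $Q(k)\ge 0$ for every $k\in\ZZ$ and every $\xi\in[0,1/2]$, completing the pointwise step.

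The second inequality in the lemma is then immediate from Lemma~\ref{lem:1} applied at parameter $t/8$:
\[
\theta_{t/8}(0)\le 1+(t/8)^{-1/2}=1+2\sqrt{2}\,t^{-1/2}\le 1+3t^{-1/2}.
\]

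I expect no serious obstacle here; the only real choice is pinning down the constants $1/2$ and $1/8$, and the discriminant computation above shows that they are essentially sharp, with equality at $(\xi,k)=(\pm 1/2,\pm 1)$. The role of the constraint $|\xi|\le 1/2$ (which is automatic on $\TT$) is precisely to ensure that the bad interval for $Q$ misses $\ZZ$, so it is important to use the torus identification $\TT\simeq[-1/2,1/2)$ from the outset.
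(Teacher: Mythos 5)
Your proof is correct and follows essentially the same route as the paper: both hinge on the pointwise inequality $(k-\xi)^2\ge k^2/8+\xi^2/2$ for $k\in\ZZ$, $|\xi|\le 1/2$, followed by termwise summation and an application of Lemma~\ref{lem:1} at parameter $t/8$. The only (cosmetic) difference is that the paper verifies the pointwise inequality by the two-line casework $n=0$ versus $|n|\ge 1$ (writing $n-\xi=n/2+(n/2-\xi)$ to get $(n-\xi)^2\ge n^2/4\ge n^2/8+\xi^2/2$), whereas you locate the roots of the associated quadratic via its discriminant; both are valid.
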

\begin{proof}
The second inequality follows directly from Lemma \ref{lem:1}, thus we
only focus on proving the first inequality. Considering separately
$n=0$ and $|n|\ge 1$ it is easy to see that for $\xi \in \TT$ we
have $|n-\xi|^2 \ge n^{2}/8+ |\xi|^{2}/2$. Indeed, the case $n=0$ is trivial and for $|n|\ge 1$ we have
\[
|n-\xi|^2=|n/2+(n/2-\xi)|^2\ge n^{2}/4 \ge n^{2}/8+ |\xi|^{2}/2,
\]
since $|\xi|\le 1/2$.  Therefore, by definition of $\theta_{t/8}$ we have 
 \begin{align*}
 \theta_t(\xi)
 \le 
 \exp\big(-\pi t|\xi|^2/2\big) 
 \sum_{n\in \ZZ} \exp\big(-\pi t n^2/8\big)
 = 
 \theta_{t/8}(0)\exp\big(-\pi t|\xi|^2/2\big),
 \end{align*}
 which completes the
proof of the Lemma~\ref{lem:2}.
\end{proof}

If we consider the $d$-dimensional multiplier $\Theta_t(\xi)$ for
$\xi \in \TT^d$, then 
Lemma \ref{lem:2} yields
\begin{equation}
\label{eq:Tedim1}
\mathcal F_{\ZZ^d}g_t(\xi)\le \frac{\Theta_{t/8}(0)}{\Theta_t(0)} 
\exp \big(-\pi t|\xi|^2/2\big),
\qquad t>0, \quad \xi \in \TT^d.
\end{equation}
If we  use the second inequality from Lemma \ref{lem:2}, then we can further bound \eqref{eq:Tedim1} as follows
\begin{equation}
\label{eq:Tedim1'}
\mathcal F_{\ZZ^d}g_t(\xi)\le \big(1+3t^{-1/2}\big)^d
\exp \big(-\pi t|\xi|^2/2\big),
\qquad t>0, \quad \xi \in \TT^d.
\end{equation}
This estimate is not sufficient for our purposes because the implied
constant depends on the dimension even when $t>0$ is 
large, up to $t\simeq d^2$. However, we can replace the implied constant in equation \eqref{eq:Tedim1'}
with $1$ whenever $t\ge 15$ while still maintaining an exponential
decay. More specifically, we prove the following.

\begin{proposition}
\label{pro:est_inf}
Let $t\ge 15$ and $\xi\in\TT^d$, then we have
\[
\mathcal F_{\ZZ^d}g_t(\xi) \le \exp\big(-\pi t |\xi|^2/10 \big).
\]
\end{proposition}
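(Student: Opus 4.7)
The plan is to reduce to a one-dimensional bound and then split into two regimes of $|\xi|$. By the tensorization formula \eqref{eq:ft3},
\[
\mathcal F_{\ZZ^d} g_t(\xi)=\prod_{j\in[d]}\frac{\theta_t(\xi_j)}{\theta_t(0)},
\]
and each factor is non-negative; hence the proposition reduces to the one-dimensional inequality
\[
\frac{\theta_t(\xi)}{\theta_t(0)}\le \exp(-\pi t\xi^{2}/10),\qquad t\ge 15,\ \xi\in[-1/2,1/2],
\]
after which multiplying over $j$ produces the advertised bound on $\TT^d$. Throughout I will repeatedly use that $\theta_t(0)\ge 1$ (from Lemma~\ref{lem:1}) and, for $t\ge 15$, the sharp geometric-series estimate $\theta_{t/8}(0)\le 1+3e^{-\pi t/8}$, obtained by keeping only the $k=1$ term of the convergent series defining $\theta_{t/8}(0)-1$.

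For the regime $|\xi|\ge 1/4$ I will apply Lemma~\ref{lem:2} and divide by $\theta_t(0)\ge 1$ to get
\[
\frac{\theta_t(\xi)}{\theta_t(0)}\le \bigl(1+3e^{-\pi t/8}\bigr)\,\exp(-\pi t\xi^{2}/2).
\]
The sought inequality will follow once $1+3e^{-\pi t/8}\le e^{2\pi t\xi^{2}/5}$, and this is cheap: for $|\xi|\ge 1/4$ and $t\ge 15$ the right-hand side is at least $e^{3\pi/8}>3$ while the left-hand side is less than $1.01$.

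The heart of the proof lies in the regime $|\xi|\le 1/4$, where a direct application of Lemma~\ref{lem:2} cannot succeed: at $\xi=0$ the factor $\theta_{t/8}(0)/\theta_t(0)$ already exceeds $1$, so the contractivity of $\mathcal F_{\ZZ^d}g_t$ near the origin must be recovered by different means. I will pair $k$ with $-k$ in the series for $\theta_t(\xi)$ to derive the identity
\[
\frac{\theta_t(\xi)}{\theta_t(0)}= e^{-\pi t\xi^{2}}\Bigl(1+\frac{4}{\theta_t(0)}\sum_{k\ge 1}e^{-\pi t k^{2}}\sinh^{2}(\pi t k\xi)\Bigr),
\]
which exposes exactly the cancellation between $\theta_t(\xi)-e^{-\pi t\xi^{2}}$ and $\theta_t(0)-1$. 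Using $1+x\le e^{x}$ and $\theta_t(0)\ge 1$, the claim collapses to
\[
4\sum_{k\ge 1}e^{-\pi t k^{2}}\sinh^{2}(\pi t k\xi)\le \frac{9\pi t\xi^{2}}{10}.
\]
I will bound $\sinh^{2}(y)\le y^{2}e^{2|y|}$ (an elementary inequality from $|\sinh y|\le |y|e^{|y|}$), giving $e^{-\pi t k^{2}}\sinh^{2}(\pi tk\xi)\le (\pi tk\xi)^{2}e^{-\pi t k(k-2|\xi|)}$. For $|\xi|\le 1/4$ one has $k(k-2|\xi|)\ge k(k-1/2)$, which equals $1/2$ at $k=1$ and is at least $3k/2$ for $k\ge 2$, so the $k=1$ term dominates and the sum is at most $(\pi t\xi)^{2}e^{-\pi t/2}(1+O(e^{-5\pi t/2}))$. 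The desired estimate then reduces to $40\pi t e^{-\pi t/2}\lesssim 1$, which is trivially valid for $t\ge 15$ with enormous room to spare.

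The main obstacle is precisely this small-$\xi$ case: the loss of a multiplicative constant $\theta_{t/8}(0)/\theta_t(0)>1$ in Lemma~\ref{lem:2} prevents a one-shot argument, so the combination of the pairing identity and the elementary $\sinh$ inequality is essential. Its virtue is that the correction to the ideal Gaussian profile $e^{-\pi t\xi^{2}}$ is controlled by a quantity which vanishes at $\xi=0$ and is of the order $t^{2}\xi^{2}e^{-\pi t/2}$, far smaller than $t\xi^{2}$ throughout $|\xi|\le 1/4$ once $t\ge 15$.
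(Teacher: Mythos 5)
Your proof is correct and follows essentially the same route as the paper: it reduces to $d=1$ via tensorization, splits at $|\xi|=1/4$, invokes Lemma~\ref{lem:2} for $|\xi|\ge 1/4$, and for $|\xi|\le 1/4$ rewrites $\theta_t(\xi)/\theta_t(0)$ via the $\cosh$/$\sinh$ pairing identity and bounds the correction — which is precisely the content of Lemma~\ref{lem:4} and Corollary~\ref{cor:1}. The only differences are cosmetic: you rederive the small-$\xi$ estimate inline instead of citing Lemma~\ref{lem:4}, and you keep slightly sharper decay factors ($e^{-\pi t/2}$ rather than the paper's $e^{-\pi t/8}$, and $1+3e^{-\pi t/8}$ rather than $1+3t^{-1/2}$), but none of that changes the structure of the argument.
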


The next lemma will be key to prove Proposition \ref{pro:est_inf}.

\begin{lemma}
\label{lem:4}
Fix $0<\alpha<1/2$. Then for all $t>0$ and $\xi\in \TT$ such that
$|\xi|\le \alpha$ we have
\begin{equation}
\label{eq:lem:4:1}
\exp\big(-\pi t \xi^2\big)
\le \frac{\theta_t(\xi)}{\theta_t(0)}
\le \exp\big(-\pi t \xi^2\big) \exp\bigg( \frac{2 \xi^2 e^{-\pi t (1/4-\alpha/2)} }{(1/2-\alpha)^2}\bigg).
\end{equation}
Further, the lower bound holds true for any $t>0$ and $\xi\in \TT$.
\end{lemma}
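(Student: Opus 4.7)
The plan is to factor out the Gaussian by writing $\theta_t(\xi) = e^{-\pi t \xi^2} F(\xi)$, where $F(\xi) := \sum_{k\in\ZZ} e^{-\pi t k^2 + 2\pi t k\xi}$ and $F(0) = \theta_t(0)$. Setting $\beta := 1-2\alpha$, the lemma reduces to $1 \le F(\xi)/F(0) \le \exp\bigl(2\xi^2 e^{-\pi t \beta/4}/(1/2-\alpha)^2\bigr)$. The lower bound is immediate by pairing $k$ with $-k$ in the series: one finds $F(\xi) = 1 + 2\sum_{k \ge 1} e^{-\pi t k^2}\cosh(2\pi t k \xi) \ge F(0)$ since $\cosh \ge 1$, and since this argument uses only $\xi \in \RR$, it yields the ``lower bound for any $t > 0$ and $\xi \in \TT$'' part of the statement.

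For the upper bound I would invoke the Jacobi triple product identity with $q := e^{-\pi t}$ and $z := e^{2\pi t\xi}$. After the $(1-q^{2n})$ factors cancel in the ratio $F(\xi)/F(0)$ and one combines the two conjugate factors $(1+q^{2n-1}z)(1+q^{2n-1}z^{-1})$, one arrives at
\[
\log \frac{F(\xi)}{F(0)} = \sum_{n \ge 1} \log\left( 1 + \frac{2 q^{2n-1}(\cosh(2\pi t\xi) - 1)}{(1+q^{2n-1})^2}\right) \le \frac{2 e^{-\pi t}(\cosh(2\pi t\xi) - 1)}{1 - e^{-2\pi t}},
\]
using $\log(1+x) \le x$ for $x \ge 0$, the trivial bound $(1+q^{2n-1})^2 \ge 1$, and the geometric sum $\sum_{n\ge 1} q^{2n-1} = q/(1-q^2)$.

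The next step extracts both the $\xi^2$ factor and the target decay $e^{-\pi t \beta/4}$ from $\cosh(2\pi t\xi) - 1$ via two elementary estimates: the monotonicity of $(\cosh y - 1)/y^2$ in $|y|$ (visible term-by-term in the Taylor series) gives, under $|\xi| \le \alpha$, the bound $\cosh(2\pi t\xi) - 1 \le (\xi/\alpha)^2 (\cosh(2\pi t\alpha) - 1)$, while $\cosh y - 1 \le (y^2/2) \cosh y \le (y^2/2) e^y$ for $y \ge 0$ gives $\cosh(2\pi t\alpha) - 1 \le 2\pi^2 t^2 \alpha^2 e^{2\pi t\alpha}$. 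Combining these yields $\log(F(\xi)/F(0)) \le \frac{4\pi^2 t^2 \xi^2 e^{-\pi t\beta}}{1-e^{-2\pi t}}$, and the lemma reduces to the scalar inequality $\frac{4\pi^2 t^2 e^{-\pi t\beta}}{1-e^{-2\pi t}} \le \frac{8 e^{-\pi t\beta/4}}{\beta^2}$. Substituting $u := \pi t\beta/4$ and using $\beta \le 1$ (so $1 - e^{-8u/\beta} \ge 1 - e^{-8u}$), this becomes $\frac{8 u^2 e^{-3u}}{1 - e^{-8u}} \le 1$ for all $u > 0$, which follows from $1 - e^{-8u} \ge 8u/(1+8u)$ together with $e^{3u} \ge u + 8u^2$, both immediate from truncating the Taylor series of $e^x$.

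The hard part will be precisely this final scalar inequality: the pointwise bound has decay $t^2 e^{-\pi t\beta}$ competing against the target $e^{-\pi t\beta/4}$, while the factor $1/(1-e^{-2\pi t})$ blows up as $t \to 0^+$; both limits must be simultaneously controlled, but the correct normalization $u = \pi t \beta/4$ collapses everything into a transparent polynomial–exponential estimate.
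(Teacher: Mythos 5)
Your proof is correct, and for the upper bound it takes a genuinely different route from the paper. Both arguments share the same first move (writing $\theta_t(\xi)=e^{-\pi t\xi^2}F(\xi)$ with $F(\xi)=\sum_k e^{-\pi tk^2+2\pi tk\xi}$ and getting the lower bound from $\cosh\ge 1$, valid for all real $\xi$, exactly as in the paper). For the upper bound the paper stays with the series: it writes $F(\xi)/F(0)-1$ as a sum, uses $\cosh(2x)-1=2\sinh^2 x$, $|\sinh x|\le |x|\cosh x$, $\cosh x\le e^{|x|}$, and then the two prepackaged inequalities $xe^{-x}\le e^{-x/2}$ and $xe^{-x}/(1-e^{-x})\le e^{-x/2}$ from \eqref{id:101} to land directly on the constant $2\xi^2 e^{-\pi t(1/4-\alpha/2)}/(1/2-\alpha)^2$, finishing with $1+x\le e^x$. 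You instead invoke the Jacobi triple product to get an exact product formula for the ratio, take logarithms, and reduce everything to the scalar inequality $8u^2e^{-3u}\le 1-e^{-8u}$. Your route is structurally appealing: the product formula makes the positivity and log-structure of $F(\xi)/F(0)$ transparent and in principle retains more information (you discard the denominators $(1+q^{2n-1})^2$, which could be kept for sharper constants). The cost is that you import a nontrivial identity where the paper needs only termwise estimates, and your final step is where the real work hides: the claim $e^{3u}\ge u+8u^2$ is true but not quite immediate from truncating the Taylor series --- after substituting $e^{3u}\ge 1+3u+\tfrac{9}{2}u^2+\tfrac{9}{2}u^3$ one still has to verify $1+2u-\tfrac{7}{2}u^2+\tfrac{9}{2}u^3\ge 0$ for $u>0$, which needs an extra line (e.g., its derivative $2-7u+\tfrac{27}{2}u^2$ has negative discriminant, so the cubic is increasing from the value $1$ at $u=0$). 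With that line added, your argument is complete.
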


\begin{proof}
Here, it is important that $\theta_t(\xi)$ is even in $\xi$.
Define $a_0=1$ and $a_n=2$ for $n\in\ZZ_+$. Then, for $\xi\in\TT$  we have
\begin{equation}
\label{eq:lem:4:form}
\theta_t(\xi)
=
\Big(\sum_{n\in\ZZ} e^{-\pi tn^2} e^{2\pi tn\xi}\Big)e^{-\pi t\xi^2}
=
\Big(\sum_{n\in\NN} a_n e^{-\pi tn^2} \cosh(2\pi tn\xi) \Big)e^{-\pi t\xi^2},
\end{equation}
so that
\begin{equation}
\label{eq:lem:4:2}
\frac{\theta_t(\xi)}{\theta_t(0)}=e^{-\pi t\xi^2}  I_t(\xi)	
\end{equation}
with
\begin{align*}
I_t(\xi):=\theta_t(0)^{-1}\sum_{n\in \NN} a_n e^{-\pi tn^2} \cosh(2\pi tn\xi).
\end{align*}
Thus it remains to estimate $I_t(\xi).$ 
By the formula $\theta_t(0)=\sum_{n\in\NN}a_ne^{-\pi t n^2}$ and since $\theta_t(0) \ge 1$, we obtain 
\[
0\le I_t(\xi)-1=\theta_t(0)^{-1}\sum_{n\in\ZZ_+} a_n e^{-\pi tn^2} [\cosh(2\pi tn\xi)-1]
\le 4\sum_{n\in\ZZ_+}  e^{-\pi tn^2}\sinh^2(\pi t n\xi),
\]
for $t> 0$ and $\xi \in \TT$. In the last inequality above we used the fact that
 $\cosh(2\pi tn\xi)-1=2\sinh^2(\pi t n\xi)$.
This, in particular, shows the first estimate in \eqref{eq:lem:4:1}.

Further, using the inequalities 
\[
|\sinh x| \le |x| \cosh x, \qquad \text{ and } \qquad
\cosh x \le e^{|x|}, \qquad x \in \mathbb{R}, 
\]
we may conclude, for $|\xi|\le \alpha$, that
\begin{align*}
|I_t(\xi)-1|
&\le 
4 \pi^2 t^2 \xi^2 \sum_{n\in\ZZ_+}n^2 e^{-\pi t n^2}\cosh^2(\pi t n \alpha)\\
&\le 
4 \pi t \xi^2\sum_{n\in\ZZ_+}\pi t n^2e^{-\pi t n^2}e^{2\pi t n \alpha} \\
& \le 
4 \pi t \xi^2\sum_{n\in\ZZ_+}\pi t n^2e^{-\pi t n^2(1-2\alpha)}.
\end{align*}
By both the inequalities in \eqref{id:101}
we obtain
\begin{align*}
|I_t(\xi)-1|
& \le 
\frac{4 \pi t \xi^2}{1-2 \alpha} \sum_{n\in\ZZ_+}e^{-\pi t n(1/2-\alpha)}\\
&=
\frac{4 \pi t \xi^2}{1-2 \alpha} \frac{e^{-\pi t (1/2-\alpha)}}{1-e^{-\pi t(1/2-\alpha)}}\\
& \le 
\frac{2 \xi^2}{(1/2-\alpha)^2} e^{-\pi t (1/4-\alpha/2)}.
\end{align*}
By invoking \eqref{eq:lem:4:2}, and using the inequality $1+x \le e^x$ for $x \ge 0$, 
we obtain
\begin{align*}
\frac{\theta_t(\xi)}{\theta_t(0)}
\le 
e^{-\pi t \xi^2} 
\bigg(1+\frac{2 \xi^2}{(1/2-\alpha)^2} e^{-\pi t (1/4-\alpha/2)} \bigg)
\le 
e^{-\pi t \xi^2}\exp\bigg(\frac{2 \xi^2}{(1/2-\alpha)^2} e^{-\pi t (1/4-\alpha/2)} \bigg).
	\end{align*}
This completes the proof of the lemma.
\end{proof}

As a corollary of Lemmas \ref{lem:2} and \ref{lem:4} we obtain the following useful estimate.

\begin{corollary}
\label{cor:1}
For $t\ge 15$ and $\xi \in \TT$ we have
\begin{equation}
\label{eq:cor:1:1}
\frac{\theta_t(\xi)}{\theta_t(0)}\le \exp\big(-\pi t \xi^2/10 \big).
\end{equation}
Specifically, \eqref{eq:cor:1:1} together with  \eqref{eq:ft3} yield Proposition \ref{pro:est_inf}.
\end{corollary}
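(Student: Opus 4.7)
The plan is to split the analysis according to the size of $|\xi|\in[0,1/2]$. For $|\xi|$ small I would apply Lemma~\ref{lem:4}, and for $|\xi|$ close to the boundary $1/2$ of $\TT$ I would fall back on the cruder but dimension-robust bound from Lemma~\ref{lem:2}. A convenient threshold turns out to be $\alpha=1/4$.

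In the regime $|\xi|\le 1/4$, Lemma~\ref{lem:4} with $\alpha=1/4$ yields
\begin{equation*}
\frac{\theta_t(\xi)}{\theta_t(0)} \le \exp(-\pi t \xi^2)\exp\bigl(32\,\xi^2 e^{-\pi t/8}\bigr),
\end{equation*}
so the target bound $\exp(-\pi t \xi^2/10)$ reduces to the scalar inequality $32 e^{-\pi t/8}\le 9\pi t/10$, which holds comfortably at $t=15$ (the left side is of order $10^{-1}$ while the right is about $42$) and only improves for larger $t$, since the left side decays exponentially while the right grows linearly.

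In the complementary regime $1/4\le|\xi|\le 1/2$, Lemma~\ref{lem:2} together with Lemma~\ref{lem:1} gives
\begin{equation*}
\frac{\theta_t(\xi)}{\theta_t(0)} \le \bigl(1+\sqrt{8/t}\bigr)\exp(-\pi t \xi^2/2),
\end{equation*}
using $\theta_t(0)\ge 1$ and $\theta_{t/8}(0)\le 1+\sqrt{8/t}$. Matching the target now amounts to showing $1+\sqrt{8/t}\le\exp(2\pi t \xi^2/5)$, and since $\xi^2\ge 1/16$ here, it suffices to verify $1+\sqrt{8/t}\le\exp(\pi t/40)$ for all $t\ge 15$. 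I would check this at the endpoint $t=15$ (where $1+\sqrt{8/15}\approx 1.73<\exp(3\pi/8)\approx 3.25$) and note that the left side is decreasing in $t$ while the right side is increasing.

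There is no real conceptual obstacle; the only subtlety is picking a threshold $\alpha$ at which both the Lemma~\ref{lem:4} decay factor $e^{-\pi t(1/4-\alpha/2)}$ and the Lemma~\ref{lem:2} gap $2\pi t\alpha^2/5$ are already comfortable at the smallest admissible scale $t=15$, and $\alpha=1/4$ does this with plenty of room. Finally, the concluding sentence of the corollary is immediate: combining \eqref{eq:cor:1:1} with the tensorization \eqref{eq:ft3} and $\sum_{j\in[d]}\xi_j^2=|\xi|^2$ gives $\mathcal F_{\ZZ^d}g_t(\xi)\le\exp(-\pi t|\xi|^2/10)$, which is Proposition~\ref{pro:est_inf}.
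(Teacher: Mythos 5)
Your proposal is correct and follows essentially the same route as the paper: split at $|\xi|=1/4$, use Lemma~\ref{lem:4} with $\alpha=1/4$ on the inner region and Lemma~\ref{lem:2} (plus $\theta_t(0)\ge 1$) on the outer region, then verify a scalar inequality in $t$ in each regime. The only differences are cosmetic arithmetic choices (e.g.\ the paper bounds $1+3t^{-1/2}\le e$ and writes $1\le 16\xi^2$, while you keep $1+\sqrt{8/t}$ and check an exponential bound directly); both close with identical numerical slack at $t=15$.
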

\begin{proof}
Consider first $|\xi|\le 1/4.$ Note that for $t\ge 10$ we have
\begin{align*}
\frac{2 e^{-\pi t (1/4-1/8)} }{(1/2-1/4)^2}
=
32e^{-\pi t /8}
\le 
32e^{-5 \pi/4}
\le 
\frac{9}{10},
\end{align*}
so that Lemma \ref{lem:4} with $\alpha=1/4$ gives
\[
\frac{\theta_t(\xi)}{\theta_t(0)}\le \exp\big(-\pi t \xi^2/10 \big).
\]
Assume now that $|\xi|\ge 1/4.$ Then, for $t\ge 15$ an application of Lemma \ref{lem:2} gives
\[
\frac{\theta_t(\xi)}{\theta_t(0)}
\le \exp\big(1-\pi t \xi^2/2 \big)
\le \exp\big(16 \xi^2-\pi t \xi^2/2 \big)
\le \exp\big(-\pi t \xi^2/10 \big).
\]
Consequently, the proof of Corollary \ref{cor:1} is justified. 
\end{proof}

The second important ingredient is the following estimate for the derivatives of $\mathcal F_{\ZZ^d}g_t(\xi)$.

\begin{proposition}
\label{pro:est_der}
For each $n\in \NN$ we have that
\[
\big|t^n \partial_t^n \mathcal F_{\ZZ^d}g_t(\xi)\big| \lesssim_n 1,
\]
holds uniformly in $t\ge 15$ and $\xi\in \TT^d$.
\end{proposition}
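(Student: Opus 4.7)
The plan is to combine Fa\`a di Bruno's formula with the decay bound from Corollary~\ref{cor:1}. By \eqref{eq:ft3}, $\mathcal F_{\ZZ^d}g_t(\xi)=e^{V(t)}$ with $V(t):=\sum_{j\in[d]}\phi_j(t)$ and $\phi_j(t):=\log(\theta_t(\xi_j)/\theta_t(0))$; Fa\`a di Bruno applied to the composition with $\exp$, together with the weighted-degree identity $\sum_i im_i=n$ for $\vec m\in S(n)$, then gives
\[
t^n\partial_t^n\mathcal F_{\ZZ^d}g_t(\xi)
=\mathcal F_{\ZZ^d}g_t(\xi)\sum_{\vec m\in S(n)}\frac{n!}{\prod_i m_i!(i!)^{m_i}}\prod_i\big(t^iV^{(i)}(t)\big)^{m_i}.
\]
Using $\mathcal F_{\ZZ^d}g_t(\xi)\le e^{-\pi t|\xi|^2/10}$ from Corollary~\ref{cor:1} and $0\le\sum_i m_i\le n$, the proposition will follow from the dimension-free estimate
\begin{equation}\label{eq:plan-goal}
|t^iV^{(i)}(t)|\lesssim_i t|\xi|^2,\qquad i\ge 1,\ t\ge 15,\ \xi\in\TT^d,\ d\in\ZZ_+,
\end{equation}
because each Fa\`a di Bruno summand would then be controlled by $C_n(1+t|\xi|^2)^n e^{-\pi t|\xi|^2/10}$, which is uniformly bounded on $[0,\infty)$ in $t|\xi|^2$.

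To prove \eqref{eq:plan-goal} I decompose $\phi_j(t)=-\pi t\xi_j^2+\rho_j(t)$, where $\rho_j(t):=L_t(\xi_j)-L_t(0)$, $L_t(\eta):=\log(1+S_t(\eta))$ and $S_t(\eta):=\sum_{k\in\ZZ\setminus\{0\}}e^{-\pi t(k^2-2k\eta)}$; this follows from $\theta_t(\eta)=e^{-\pi t\eta^2}(1+S_t(\eta))$. The substitution $k\mapsto -k$ shows $S_t$, and hence $L_t$, is \emph{even} in $\eta$. Differentiating in $t$ yields $V^{(1)}(t)=-\pi|\xi|^2+\sum_j[L_t'(\xi_j)-L_t'(0)]$ and $V^{(i)}(t)=\sum_j[L_t^{(i)}(\xi_j)-L_t^{(i)}(0)]$ for $i\ge 2$, so, using $\sum_j\xi_j^2=|\xi|^2$, the bound \eqref{eq:plan-goal} reduces to the uniform estimate
\begin{equation}\label{eq:plan-key}
|L_t^{(i)}(\xi_j)-L_t^{(i)}(0)|\lesssim_i \xi_j^2/t^i,\qquad |\xi_j|\le 1/2,\ t\ge 15,\ i\ge 1.
\end{equation}
For $|\xi_j|\in[1/4,1/2]$, note that $b_k(\eta):=k^2-2k\eta\ge 0$ for $|\eta|\le 1/2$ and $k\ne 0$, and only the terms $k=\pm 1$ admit $b_k$ close to zero (for $|k|\ge 2$ one has $b_k\ge 2$, contributing $O(e^{-2\pi t})$); together with $\sup_{x\ge 0}x^i e^{-\pi t x}\lesssim_i 1/t^i$, the identity $\partial_t^iS_t(\eta)=(-\pi)^i\sum_{k\ne 0}b_k^i e^{-\pi t b_k}$ yields $|\partial_t^iS_t(\eta)|\lesssim_i 1/t^i$, and Fa\`a di Bruno for $\log(1+S_t)$ with $1+S_t\ge 1$ gives $|L_t^{(i)}(\eta)|\lesssim_i 1/t^i$, whence \eqref{eq:plan-key} holds because $\xi_j^2\ge 1/16$. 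For $|\xi_j|\le 1/4$, the evenness of $L_t^{(i)}$ in $\eta$ forces $\partial_\eta L_t^{(i)}(0)=0$, so Taylor's theorem provides
\[
|L_t^{(i)}(\xi_j)-L_t^{(i)}(0)|\le \frac{\xi_j^2}{2}\sup_{|\eta|\le 1/4}|\partial_\eta^2 L_t^{(i)}(\eta)|.
\]
On $|\eta|\le 1/4$ one has $b_k(\eta)\ge k^2/2$ and each $\partial_\eta$ on $e^{-\pi t b_k(\eta)}$ brings down a factor $2\pi tk$, so $|\partial_\eta^a\partial_t^iS_t(\eta)|\lesssim_{i,a} t^a e^{-\pi t/2}$; cascading through the Fa\`a di Bruno expansion of $L_t^{(i)}=\partial_t^i\log(1+S_t)$ delivers $|\partial_\eta^2 L_t^{(i)}(\eta)|\lesssim_i t^2 e^{-\pi t/2}\lesssim_i 1/t^i$ for $t\ge 15$, completing \eqref{eq:plan-key}.

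The main obstacle is the dimension-free control of $V^{(i)}(t)$. The naive estimate $|V^{(i)}|\le\sum_j|\phi_j^{(i)}|$ loses a factor of $d$ because for $|\xi_j|$ near $1/2$ the individual derivatives $|\phi_j^{(i)}(t)|$ are merely bounded by an absolute constant with no $\xi_j^2$-smallness to offset the summation. Stripping off the linear-in-$t$ summand $-\pi t\xi_j^2$ from $\phi_j$ and exploiting the parity of $L_t(\eta)$ in $\eta$ is precisely what converts the a priori bound $|\rho_j^{(i)}|\lesssim 1/t^i$ into the sharp $|\rho_j^{(i)}|\lesssim\xi_j^2/t^i$, which upon summation in $j$ meshes with the exponential decay $e^{-\pi t|\xi|^2/10}$ provided by Corollary~\ref{cor:1} to deliver the desired dimension-free estimate.
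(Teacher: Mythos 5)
Your proposal is correct and takes a genuinely different route from the paper. The paper establishes the stronger intermediate Proposition~\ref{pro:est_der_gauss}, namely $|t^n\partial_t^n\mathcal F_{\ZZ^d}g_t(\xi)|\le C_n\exp(-\pi t|\xi|^2 2^{-n}/10)$, by first proving the one-dimensional derivative bound Lemma~\ref{lem:est_der_1dim} (which contains the key factor $\xi^2$ in front of the exponentially small term) and then running a Leibniz-plus-induction argument on $n$, using the tensor structure and the inductive hypothesis in dimension $d-1$. You instead take logarithms, writing $\mathcal F_{\ZZ^d}g_t(\xi)=e^{V(t)}$ with $V(t)=\sum_j\log(\theta_t(\xi_j)/\theta_t(0))$, apply Fa\`a di Bruno to $\exp\circ V$, and reduce everything to the scalar estimate $|t^iV^{(i)}(t)|\lesssim_i t|\xi|^2$. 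The decomposition $V(t)=-\pi t|\xi|^2+\sum_j\rho_j(t)$ with $\rho_j(t)=L_t(\xi_j)-L_t(0)$, and the parity of $L_t$ in $\eta$ forcing $\partial_\eta L_t^{(i)}(0)=0$, then delivers the quadratic-in-$\xi_j$ smallness $|\rho_j^{(i)}|\lesssim_i\xi_j^2/t^i$ that makes the sum in $j$ dimension-free; this plays exactly the role that the $\xi^2$ factor plays in the paper's Lemma~\ref{lem:est_der_1dim}, but the logarithm decouples this scalar bound from the exponential decay $e^{-\pi t|\xi|^2/10}$ supplied by Corollary~\ref{cor:1}, so no induction on $n$ is needed. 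A side benefit is that your final bound $C_n(1+t|\xi|^2)^ne^{-\pi t|\xi|^2/10}$ has a better exponential rate than Proposition~\ref{pro:est_der_gauss}, traded off against a polynomial prefactor; both of course suffice for the stated Proposition~\ref{pro:est_der}. One small stylistic note: the intermediate claim $|\partial_\eta^a\partial_t^iS_t(\eta)|\lesssim_{i,a}t^a e^{-\pi t/2}$ on $|\eta|\le 1/4$ should more carefully read as a bound of the form $\lesssim_{i,a}t^{a}e^{-\pi t/4}$ (or similar), since tracking the $\partial_t^i$ derivatives introduces factors $b_k^i$ that must be absorbed; this does not affect the conclusion because any polynomial-in-$t$ times $e^{-ct}$ is $\lesssim_i t^{-i}$ for $t\ge 15$, but the constant $1/2$ as written is not justified.
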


We begin with a one-dimensional variant of Proposition \ref{pro:est_der}, which can be stated as follows.

\begin{lemma}
\label{lem:est_der_1dim}
For each $n\in \NN$ we have that
\begin{equation}
\label{eq:est_der_1dim}
\bigg|t^n \partial_t^n\,\bigg(\frac{\theta_t(\xi)}{\theta_t(0)}\bigg)\bigg|
\lesssim_n 
\big(t^n\xi^{2n}+\xi^2 e^{-\pi t/5} \big) 
\exp\big(-\pi t |\xi|^2/5  \big),
\end{equation}
holds uniformly in $t>1$ and $\xi\in \TT$.
\end{lemma}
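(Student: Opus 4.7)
The plan is to exploit the fact that $z\mapsto F_z(\xi):=\theta_z(\xi)/\theta_z(0)$ extends holomorphically to the right half-plane $\{\mathrm{Re}(z)>0\}$ (absolute convergence of the defining series on compacta) and to apply the Cauchy integral formula on a circle $|z-t|=\varepsilon t$ for a small fixed $\varepsilon\in(0,1/2)$, say $\varepsilon=1/9$. A key technical preliminary is the lower bound
\[
|\theta_z(0)|\ge 1-2\sum_{k\ge 1}e^{-\pi\mathrm{Re}(z)k^2}\ge 1/2,
\]
valid as soon as $\mathrm{Re}(z)\ge 1/2$; this is guaranteed on the contour whenever $t>1$ and $\varepsilon\le 1/2$. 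The proof then splits according to the size of $|\xi|$.

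For $|\xi|\ge 1/4$, the termwise bound $|\theta_z(\xi)|\le \theta_{\mathrm{Re}(z)}(\xi)$ together with Lemma~\ref{lem:2} applied at the real parameter $\mathrm{Re}(z)$ gives $|F_z(\xi)|\lesssim e^{-\pi\mathrm{Re}(z)\xi^2/2}$ on the contour. Cauchy's formula then yields $|t^n\partial_t^n F_t(\xi)|\lesssim_n e^{-4\pi t\xi^2/9}\le e^{-\pi t\xi^2/5}$, since $(1-\varepsilon)/2>1/5$. Because $t>1$ and $|\xi|\ge 1/4$ force $t^n\xi^{2n}\ge 16^{-n}$, the bound is dominated by a constant multiple of $t^n\xi^{2n}e^{-\pi t\xi^2/5}$, which is the first term of the claimed inequality.

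For $|\xi|\le 1/4$ we use the decomposition, essentially recorded in the proof of Lemma~\ref{lem:4},
\[
F_t(\xi)=e^{-\pi t\xi^2}+R_t(\xi),\qquad R_t(\xi)=\frac{4e^{-\pi t\xi^2}}{\theta_t(0)}\sum_{k\ge 1}e^{-\pi tk^2}\sinh^2(\pi tk\xi).
\]
The main piece is immediate: $|t^n\partial_t^n e^{-\pi t\xi^2}|=(\pi t\xi^2)^n e^{-\pi t\xi^2}\lesssim_n t^n\xi^{2n}e^{-\pi t\xi^2/5}$. For $R_t$ we again extend to complex $z$ and use the crucial factorization
\[
|\sinh(\pi zk\xi)|\le\sinh(\pi|z|k|\xi|)\le\pi|z|k|\xi|\,e^{\pi|z|k|\xi|},
\]
which extracts a genuine $|\xi|$ factor. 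On $|z-t|=\varepsilon t$ with $\varepsilon=1/9$ and $|\xi|\le 1/4$, the $k$th summand of $\sum_{k\ge 1}|e^{-\pi zk^2}\sinh^2(\pi zk\xi)|$ is bounded by $\pi^2(10/9)^2 t^2 k^2\xi^2$ times an exponential whose exponent equals $\pi tk(5-8k)/9\le -\pi tk/3$ for all $k\ge 1$. Summing over $k$ and using $|\theta_z(0)|\ge 1/2$ yields $|R_z(\xi)|\lesssim t^2\xi^2 e^{-\pi t\xi^2/5}e^{-\pi t/3}$ on the contour (after noting that $e^{-\pi\mathrm{Re}(z)\xi^2}\le e^{-8\pi t\xi^2/9}\le e^{-\pi t\xi^2/5}$). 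Cauchy's formula gives $|t^n\partial_t^n R_t(\xi)|\lesssim_n t^2\xi^2 e^{-\pi t\xi^2/5}e^{-\pi t/3}$, and the elementary bound $\sup_{t>0}t^2 e^{-2\pi t/15}<\infty$ (i.e.\ $t^2 e^{-\pi t/3}\lesssim e^{-\pi t/5}$) delivers the second term of the claim.

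The main obstacle lies in the small-$|\xi|$ regime: one must extract a genuine $\xi^2$ factor (not merely $|\xi|$) from each derivative of the correction $R_t(\xi)$, while simultaneously controlling the growth of $|\sinh(\pi zk\xi)|$ for complex $z$ against the decay of $|e^{-\pi zk^2}|$ and leaving enough room in the exponent to arrive at a rate $\pi t/5$. This requires a carefully tuned small Cauchy radius $\varepsilon t$. The contour-integral approach sidesteps the combinatorial mess of differentiating the quotient $\theta_t(\xi)/\theta_t(0)$ termwise (which would otherwise call for Faà di Bruno's formula applied both to $\theta_t(0)^{-1}$ and to compositions with $\sinh^2$), and it automatically produces the desired $n$-uniform constants.
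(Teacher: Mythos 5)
Your argument is correct, and it reaches the stated bound by a genuinely different route from the paper. The paper's proof is entirely real-variable: it applies the Leibniz rule to the quotient $\theta_t(\xi)/\theta_t(0)$, Fa\'a di Bruno's formula \eqref{eq:3} to the derivatives of $1/\theta_t(0)$, and, in the regime $|\xi|\le 1/4$, exactly your decomposition $\theta_t(\xi)/\theta_t(0)=e^{-\pi t\xi^2}\big(1+\varphi_t(\xi)/\theta_t(0)\big)$ with $\varphi_t(\xi)=2\sum_{j\ge1}e^{-\pi tj^2}(\cosh(2\pi tj\xi)-1)=4\sum_{j\ge1}e^{-\pi tj^2}\sinh^2(\pi tj\xi)$, extracting the $\xi^2$ factor via the auxiliary function $h(x)=(\cosh 2\pi x-1)/(2\pi x)^2$ and the bound $|\partial_x^l h(x)|\le(2\pi)^le^{2\pi|x|}$. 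You instead extend $z\mapsto\theta_z(\xi)/\theta_z(0)$ holomorphically to $\{\operatorname{Re} z>1/2\}$ (where $|\theta_z(0)|\ge 1-2\sum_{k\ge1}e^{-\pi k^2/2}>1/2$) and run Cauchy's integral formula on $|z-t|=t/9$, so that every $n$-th derivative estimate reduces to a sup bound of the (undifferentiated) function on the contour. Your numerology checks out: on the contour $\operatorname{Re} z\ge 8t/9$, $|z|\le 10t/9$, the exponent $-\pi\operatorname{Re}(z)k^2+2\pi|z|k|\xi|\le\pi tk(5-8k)/9\le-\pi tk/3$ for $|\xi|\le1/4$, and $t^2e^{-\pi t/3}\lesssim e^{-\pi t/5}$ absorbs the loss, while for $|\xi|\ge1/4$ the decay rate $4/9>1/5$ and the trivial bound $t^n\xi^{2n}\ge 16^{-n}$ close the argument. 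What your approach buys is the elimination of all Fa\'a di Bruno/Leibniz combinatorics and automatic $n$-uniformity of the constants; what it costs is the need to control $\theta_z(0)$ away from zero on a complex disk and to tune the Cauchy radius so that the $e^{2\pi|z|k|\xi|}$ growth of $\sinh^2$ is dominated by $e^{-\pi\operatorname{Re}(z)k^2}$ — precisely the tension you identify. Both proofs are sound; the paper's is more elementary, yours is shorter in spirit once the contour estimates are set up.
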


\begin{remark}
\label{rem:1:lem:est_der_1dim}
Now two remarks are in order.

\begin{itemize}
\item[(i)] Estimate \eqref{eq:est_der_1dim} differs from what happens for the derivatives of the Gaussian kernel on $\RR.$ Namely, 
\[t^n \partial_t^n \exp(-\pi t x^2)= (-\pi)^n t^n x^{2n}\exp(-\pi t x^2),\qquad x\in \RR,\]
so that we do not have the extra second term as in  \eqref{eq:est_der_1dim}. This limits the applicability of Lemma~\ref{lem:est_der_1dim} to only large scales  $t$. Small scales will be handled differently in the next section.

\item[(ii)] For $n=0$, Lemma \ref{lem:est_der_1dim} gives a better
exponential decay $\exp(-\pi t |\xi|^2/5)$ than in Corollary~\ref{cor:1}. 
The price is that the implicit constant in Lemma~\ref{lem:est_der_1dim} is not necessairly $1$. For this reason
Corollary~\ref{cor:1} is vital for obtaining Proposition~\ref{pro:est_inf} with a constant independent of the dimension.

\end{itemize}
\end{remark}

\begin{proof}[Proof of Lemma \ref{lem:est_der_1dim}]
The proof is technical, so we will proceed in two steps. We fix $n\in\NN$ and $t>1$.

\paragraph{\bf Step 1} Consider first the case when $|\xi|\ge 1/4$. Then
\begin{equation}
\label{eq:lem:est_der_1dim_1}
\partial_t^n\,\bigg(\frac{\theta_t(\xi)}{\theta_t(0)}\bigg)
=
\sum_{k=0}^{n-1}{n\choose k} \partial_t^k \theta_t(\xi) \partial_t^{n-k}\bigg(\frac{1}{\theta_t(0)}\bigg)
+
\frac{\partial_t^n \theta_t(\xi)}{\theta_t(0)}.	
\end{equation}
Moreover, 
\[
\partial_t^k \theta_t(\xi)=\sum_{j\in \ZZ}(-\pi (j-\xi)^2)^k e^{-\pi t (j-\xi)^2},
\]
so that using Lemma \ref{lem:2} we obtain, for $k\in\NN_{\le n}$, that 
\begin{equation}
\label{eq:lem:est_der_1dim_2}
\big|	t^k \partial_t^k \theta_t(\xi)\big|\lesssim_n \sum_{j\in \ZZ} e^{-\pi t (j-\xi)^2/2}=\theta_{t/2}(\xi)\lesssim \exp\big(-\pi t |\xi|^2/4  \big).
\end{equation}
Additionally, when $l\in \ZZ_+$, we have
\[
t^l \partial_t^l [\theta_t(0)]
=
t^l \partial_t^l\big[1+2\sum_{j\in\ZZ_{+}}e^{-\pi t j^2}\big]
=
2 \sum_{j\in \ZZ_{+} }(-\pi t j^2)^l e^{-\pi t j^2},
\]
 and thus we have 
\begin{equation}
\label{eq:lem:est_der_1dim_3} 
t^l |\partial_t^l [\theta_t(0)]|
\lesssim_l 
e^{-\pi t/2},\qquad t>1, \quad l \in\ZZ_+.
\end{equation}
Now, using Fa\'a di Bruno's formula for the $(n-k)$-th derivative
$(f\circ g)^{(n-k)}$ with $f(x)=1/x$ and $g(t)=\theta_t(0)$ we obtain
\begin{align*}
\partial_t^{n-k}\bigg(\frac{1}{\theta_t(0)}\bigg)
&=
\sum \frac{(n-k)!}{m_1! \cdot \ldots \cdot m_{n-k}!}f^{(m_1+\cdots + m_{n-k})}(\theta_t(0))\prod_{j=1}^{n-k}\bigg(\frac{\partial_t^j \theta_t(0)}{j!}\bigg)^{m_j}\\
&=
\sum \frac{(n-k)!(m_1+\cdots + m_{n-k})!(-1)^{m_1+\cdots+m_{n-k}}}{(m_1! \cdot \ldots \cdot m_{n-k}!)(\theta_t(0))^{m_1+\cdots+m_{n-k}+1}} \prod_{j=1}^{n-k}\bigg(\frac{\partial_t^j \theta_t(0)}{j!}\bigg)^{m_j},
\end{align*}
where the (finite) sums above run over all $(n-k)$-tuples of
non-negative integers $(m_1,\ldots,m_{n-k})$ satisfying the constraint
$\sum_{j=1}^{n-k} j m_j=n-k.$ In particular, if $k<n,$ then some of
the $m_j$ must be at least $1.$ Hence, using
\eqref{eq:lem:est_der_1dim_3} together with the fact that
$\theta_t(0)\ge 1$ we obtain for $k\in\NN_{<n}$ the bound
\begin{equation}
\label{eq:lem:est_der_1dim_3'} 
t^{n-k}
\left|\partial_t^{n-k}\bigg(\frac{1}{\theta_t(0)}\bigg)\right|
\lesssim_n 
e^{-\pi t /2},\qquad t>1.
\end{equation}
Combining \eqref{eq:lem:est_der_1dim_1}, \eqref{eq:lem:est_der_1dim_2}
and \eqref{eq:lem:est_der_1dim_3'} for $|\xi|\ge 1/4$ we arrive at
\begin{equation*}
\begin{split}
\left|t^n \partial_t^n\,\bigg(\frac{\theta_t(\xi)}{\theta_t(0)}\bigg)\right| &= \left|\sum_{k=0}^{n-1}{n\choose k} t^k\partial_t^k \big( \theta_t(\xi) \big) t^{n-k}\partial_t^{n-k}\bigg(\frac{1}{\theta_t(0)}\bigg) +
\frac{t^n \partial_t^n \theta_t(\xi)}{\theta_t(0)} \right|\\
&\lesssim_n \big(e^{-\pi t/2}+1\big)\exp\big(-\pi t |\xi|^2/4 \big)\\
&\simeq \exp\big(-\pi t |\xi|^2/4 \big)\\
&\lesssim_n t^n |\xi|^{2n}\exp\big(-\pi t |\xi|^2/4 \big),
		 \end{split}
\end{equation*}
where in the last inequality we used the restrictions $t>1$ and $|\xi|\ge 1/4$.

\paragraph{\bf Step 2} It remains to consider the case when $|\xi|\le 1/4$. Denote 
\[
\varphi_t(\xi)
=
2\sum_{j\in\ZZ_+}e^{-\pi t j^2} \big(\cosh (2\pi tj \xi)-1\big).
\]
Then, using \eqref{eq:lem:4:form} we see that
\begin{equation}
\label{eq:lem:est_der_1dim_5}
\frac{\theta_t(\xi)}{\theta_t(0)}
=
e^{-\pi t \xi^2}  \theta_t(0)^{-1}\big(1+2\sum_{j\in\ZZ_+}e^{-\pi t j^2} \cosh (2\pi tj \xi)\big)
=
e^{-\pi t \xi^2}\bigg(\frac{\varphi_t(\xi)}{\theta_t(0)}+1\bigg).
\end{equation}	
Next, we prove that for every $l \in\NN$ we have
\begin{equation}
\label{eq:lem:est_der_1dim_6}
|\partial^l_t \varphi_t(\xi)|
\lesssim_l 
\xi^2 e^{-\pi t/4},\qquad t>1, \quad |\xi|\le 1/4.
\end{equation} 	
To prove \eqref{eq:lem:est_der_1dim_6} we denote
\[
h(x) = \frac{\cosh 2\pi x -1}{(2\pi x)^2},\qquad x\in \RR.
\]
Observe that by a direct computation we obtain
\begin{equation}
		\label{eq:lem:est_der_1dim_7}
	 |\partial^{l}_x h(x)|
	 \le 
	 (2\pi)^{l} 
	 e^{2\pi |x|},\qquad x\in \RR,\quad l\in \NN.
 \end{equation} 
Then
\[
\varphi_t(\xi)
=
2\sum_{j\in\ZZ_+}e^{-\pi t j^2}h(t j \xi)(2\pi t j \xi)^2=8\pi ^2 \xi^2 \sum_{j\in\ZZ_+}(tj)^2 e^{-\pi t j^2}h(tj\xi),\]
so that using the Leibniz rule followed by \eqref{eq:lem:est_der_1dim_7} we obtain, for $t>1$ and $|\xi|\le 1/4,$ that
 \begin{align*}
 |\partial^l_t \varphi_t(\xi)|
 &\lesssim_l 
 \xi^2 \sum_{j\in\ZZ_+} j^{2l+2} t^2 e^{-\pi t j^2} e^{2\pi tj|\xi|}\\
& \leq 
 \xi^2 \sum_{j\in\ZZ_+} j^{2l+2} t^2 e^{-\pi t j^2} e^{\pi tj/2} \\
 & \lesssim 
 \xi^2 t^2 \sum_{j\in\ZZ_+} j^{2l+2} e^{-\pi t j/2} \\
& \lesssim_l 
 \xi^2 t^2 \sum_{j\in\ZZ_+}  e^{-\pi t j/3}\\
& \lesssim 
 \xi^2 e^{-\pi t/4}.
 \end{align*}
 Thus, \eqref{eq:lem:est_der_1dim_6} is justified.  Returning to
 \eqref{eq:lem:est_der_1dim_5} and using the Leibniz rule we have
\begin{align*}
\partial_t^n \bigg(	\frac{\theta_t(\xi)}{\theta_t(0)}\bigg) 
&=
\partial_t^n (e^{-\pi t \xi^2}) \bigg(\frac{\varphi_t(\xi)}{\theta_t(0)}+1\bigg)
+
\sum_{k=0}^{n-1} {n\choose k} \partial_t^k(e^{-\pi t \xi^2}) \partial_t^{n-k}\bigg(\frac{\varphi_t(\xi)}{\theta_t(0)}+1\bigg)\\
&= (-\pi \xi^2)^n e^{-\pi t \xi^2} \bigg(\frac{\varphi_t(\xi)}{\theta_t(0)}+1\bigg)
+
\sum_{k=0}^{n-1} {n\choose k} (-\pi \xi^2)^k e^{-\pi t \xi^2} \partial_t^{n-k}\bigg(\frac{\varphi_t(\xi)}{\theta_t(0)}\bigg).
\end{align*}
Using \eqref{eq:lem:est_der_1dim_6} with $l=0$ we see that 
\begin{equation}
\label{eq:lem:est_der_1dim_7'}
\left|t^n \partial_t^n \bigg(	\frac{\theta_t(\xi)}{\theta_t(0)}\bigg)\right|\lesssim_n t^n \xi^{2n}e^{-\pi t \xi^2}+ t^n \sum_{k=0}^{n-1}  \xi^{2k} e^{-\pi t \xi^2} \left|\partial_t^{n-k}\bigg(\frac{\varphi_t(\xi)}{\theta_t(0)}\bigg)\right|.
\end{equation}
Yet another application of the Leibniz rule gives for $k\in\NN_{<n}$ the formula
\[
\left|\partial_t^{n-k}\bigg(\frac{\varphi_t(\xi)}{\theta_t(0)}\bigg)\right| = \left|\big(\partial_t^{n-k} \varphi_t (\xi)\big)\frac{1}{\theta_t(0)}+\sum_{l=0}^{n-k-1}{n-k \choose l}\partial_t^l \big(\varphi_t (\xi)\big)\partial_t^{n-k-l}\bigg(\frac{1}{\theta_t(0)}\bigg)\right|.
\]
Since in the sum above $n-k-l\ge 1,$ using
\eqref{eq:lem:est_der_1dim_3'} and \eqref{eq:lem:est_der_1dim_6} we
obtain
\[
\left|\partial_t^{n-k}\bigg(\frac{\varphi_t(\xi)}{\theta_t(0)}\bigg)\right| \lesssim_n \xi^2 e^{-\pi t/4}+ \xi^2 e^{-3\pi t/4} \simeq \xi^2 e^{-\pi t/4}.
\]
By this estimate and \eqref{eq:lem:est_der_1dim_7'} we deduce, for $t>1$
and $|\xi|\le 1/4$, that
\begin{align*}
\left|t^n \partial_t^n \bigg(	\frac{\theta_t(\xi)}{\theta_t(0)}\bigg)\right| \lesssim_n t^n \xi^{2n} e^{-\pi t\xi^2}+t^n\xi^2 e^{-\pi t/4}e^{-\pi t \xi^2}
\lesssim_n 
\big(t^n \xi^{2n}+\xi^2 e^{-\pi t /5} \big) e^{-\pi t \xi^2}.
\end{align*}
This completes the proof in the case $|\xi|\le 1/4,$ and thus also the proof of Lemma \ref{lem:est_der_1dim}. 
\end{proof}

We are now ready to prove Proposition~\ref{pro:est_der}. Actually we shall prove a stronger statement.

\begin{proposition}
\label{pro:est_der_gauss}
For each $n\in \NN$ there exists $C_n>0$ independent of the dimension
$d\in\ZZ_+$ such that
\begin{align}
\label{eq:4}
\big|t^n\partial_t^n\mathcal F_{\ZZ^d}g_t(\xi)\big|
\leq C_n \exp\big(-\pi t |\xi|^22^{-n}/ 10 \big),
\end{align}
holds uniformly in $t\ge 15$ and $\xi\in \TT^d$. 
\end{proposition}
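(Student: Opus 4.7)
My plan is to apply the Leibniz product rule to the tensorization \eqref{eq:ft3}, writing
\[
\mathcal F_{\ZZ^d} g_t(\xi) = \prod_{j\in[d]} F_j(t), \qquad F_j(t) := \frac{\theta_t(\xi_j)}{\theta_t(0)},
\]
and to control the resulting combinatorial blow-up by first extracting an explicit factor of $\xi_j^2$ from the one-dimensional derivative estimate in Lemma~\ref{lem:est_der_1dim}. The first technical step is a refined 1D bound: for $t \ge 15$ and $n_j \ge 1$,
\[
\bigl|t^{n_j} \partial_t^{n_j} F_j(t)\bigr| \lesssim_{n_j} (t \xi_j^2)\, e^{-\pi t \xi_j^2/10}.
\]
For the first summand in Lemma~\ref{lem:est_der_1dim} this follows by writing $t^{n_j}\xi_j^{2n_j} = (t\xi_j^2)\cdot(t\xi_j^2)^{n_j-1}$ and using $(t\xi_j^2)^{n_j-1} e^{-\pi t \xi_j^2/10} \lesssim_{n_j} 1$ (since $y^{n_j - 1}e^{-y}$ is bounded on $[0,\infty)$); for the second summand one uses $e^{-\pi t/5} \le 1$ together with $\xi_j^2 \le t\xi_j^2$. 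For $n_j = 0$, Corollary~\ref{cor:1} provides $|F_j(t)| \le e^{-\pi t \xi_j^2/10}$.

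Substituting these estimates into
\[
t^n \partial_t^n \prod_{j\in[d]} F_j(t) = \sum_{n_1+\cdots+n_d = n} \binom{n}{n_1,\ldots,n_d} \prod_{j\in[d]} \bigl[t^{n_j}\partial_t^{n_j} F_j(t)\bigr],
\]
and reorganizing by the set $S := \{j : n_j \ge 1\}$ of size $k \in \{1,\ldots,n\}$, the exponential factors multiply across all $j \in [d]$ to $e^{-\pi t|\xi|^2/10}$, while the polynomial part reduces to $t^k \prod_{j \in S}\xi_j^2$ times finitely many bounded combinatorial constants (multinomials and the $C_{n_j}$'s) which depend only on $n$. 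The decisive dimension-free input is the symmetric-function inequality
\[
\sum_{|S|=k} \prod_{j\in S}\xi_j^2 = e_k(\xi_1^2,\ldots,\xi_d^2) \le \frac{|\xi|^{2k}}{k!},
\]
which follows from $(a_1+\cdots+a_d)^k \ge k!\, e_k(a_1,\ldots,a_d)$ for $a_i \ge 0$. Without the $\xi_j^2$-factorization produced in the preceding step one would instead incur a $\binom{d}{k}$ overhead from the choice of $S$; evading this blow-up is the main obstacle of the argument.

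Putting the pieces together yields
\[
\bigl|t^n\partial_t^n \mathcal F_{\ZZ^d} g_t(\xi)\bigr| \lesssim_n \sum_{k=1}^n \frac{(t|\xi|^2)^k}{k!}\, e^{-\pi t |\xi|^2/10}.
\]
Splitting the exponential as $e^{-\pi t |\xi|^2/20} \cdot e^{-\pi t|\xi|^2/20}$ and absorbing the polynomial prefactor via $(t|\xi|^2)^k e^{-\pi t|\xi|^2/20} \lesssim_k 1$ produces $|t^n \partial_t^n \mathcal F_{\ZZ^d} g_t(\xi)| \le C_n e^{-\pi t|\xi|^2/20}$ for $n \ge 1$. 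Since $2^{-n}/10 \le 1/20$ for every $n \ge 1$, this is even stronger than \eqref{eq:4}; the base case $n = 0$ is exactly Proposition~\ref{pro:est_inf} with $C_0 = 1$.
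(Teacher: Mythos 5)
Your proof is correct, and it takes a genuinely different (and in fact sharper) route than the paper's.

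The paper proceeds by induction on $n$: it writes $\partial_t^{n+1}\mathcal F_{\ZZ^d}g_t=\partial_t^n\big(\sum_{k\in[d]}\partial_t F_k\cdot\prod_{j\neq k}F_j\big)$, applies the 1D Lemma~\ref{lem:est_der_1dim} to the factor $\partial_t^{j+1}F_k$ and the induction hypothesis in dimension $d-1$ to the remaining product, and then controls $\sum_{k\in[d]}|\xi_k|^{2(j+1)}\le |\xi|^{2(j+1)}$. The price is that absorbing the polynomial prefactor halves the exponential rate at each induction step, producing the degrading rate $2^{-n}/10$. You instead expand $\partial_t^n\prod_j F_j$ once and for all via the multinomial Leibniz rule, extract a single factor $t\xi_j^2$ from each differentiated coordinate, and kill the apparent $\binom{d}{k}$ combinatorial blow-up with the elementary symmetric function inequality $e_k(\xi_1^2,\ldots,\xi_d^2)\le|\xi|^{2k}/k!$. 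This eliminates the induction entirely, yields the $n$-independent rate $e^{-\pi t|\xi|^2/20}$ for all $n\ge 1$, and arguably isolates the dimension-free mechanism more transparently than the paper's iterative halving. Both arguments rest on the same 1D inputs (Lemma~\ref{lem:est_der_1dim} for $n_j\ge 1$ and Corollary~\ref{cor:1} for $n_j=0$), so the improvement is purely in how the tensor structure is aggregated. A minor stylistic point: you should state once that you use $t\ge 15>1$ in the step $\xi_j^2\le t\xi_j^2$, but this is explicit in your hypothesis, so there is no gap.
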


\begin{proof}
We will prove \eqref{eq:4} proceeding by induction on $n\in\NN$. For
$n=0$ inequality \eqref{eq:4} is true thanks to
Proposition~\ref{pro:est_inf}.  
Further, observe that using Lemma~\ref{lem:est_der_1dim} we get \eqref{eq:4} for $d=1$ and any $n \in \NN$.
Throughout the proof for
$\xi^{(k)}:=(\xi_1,\ldots,\xi_{k-1},\xi_{k+1},\ldots,\xi_d)$ we let
\[
\mathcal F_{\ZZ^d}g_t(\xi^{(k)}):=\prod_{j\in[d]\setminus\{k\}}\frac{\theta_t(\xi_j)}{\theta_t(0)},
\]
so that, for each $k\in[d]$ we have
\[
\mathcal F_{\ZZ^d}g_t(\xi)=\frac{\theta_t(\xi_k)}{\theta_t(0)}\mathcal F_{\ZZ^d}g_t(\xi^{(k)}).
\]
Assume now that inequality \eqref{eq:4} holds for all
$j\in\NN_{\le n}$. We now prove that inequality \eqref{eq:4} is true
with $n+1$ in place of $n$. Applying the Leibniz rule twice we obtain
\begin{equation}
\label{eq:pro:est_der_gauss_1}
\begin{split}
t^{n+1}\,\partial_t^{n+1}\mathcal F_{\ZZ^d}g_t(\xi)&=t^{n+1}\, \partial_t^n\partial_t\bigg(\prod_{k\in[d]}\frac{\theta_t(\xi_k)}{\theta_t(0)}\bigg) =
t^{n+1}\, \partial_t^n \bigg(\sum_{k\in[d]}  \partial_t \bigg(\frac{\theta_t(\xi_k)}{\theta_t(0)}\bigg)\mathcal F_{\ZZ^d}g_t(\xi^{(k)})\bigg)\\
&=\sum_{k\in[d]} \sum_{j=0}^{n} {n \choose j} \left[t^{j+1}\partial_t^{j+1} \bigg(\frac{\theta_t(\xi_k)}{\theta_t(0)}\bigg)\right] \left[t^{n-j}\partial_t^{n-j}\mathcal F_{\ZZ^d}g_t(\xi^{(k)})\right].
\end{split}
\end{equation}
By Lemma \ref{lem:est_der_1dim}, we have for $t\ge 15$ and
$\xi_k\in \TT$, the following bound
\[
\left|t^{j+1}\partial_t^{j+1} \bigg(\frac{\theta_t(\xi_k)}{\theta_t(0)}\bigg)\right|
\lesssim_j 
\big(t^{j+1}|\xi_k|^{2(j+1)}+\xi_k^2 e^{-\pi t/5}\big) 
\exp\big(-\pi t \xi_k^2/5 \big).
\]
By the induction hypothesis in dimension $d-1$ with $\xi^{(k)}$ in
place of $\xi$ applied for $n-j$ one has
\[
\big|t^{n-j}\partial_t^{n-j}\mathcal F_{\ZZ^d}g_t(\xi^{(k)})\big|
\lesssim_{n} 
\exp\big(-\pi t |\xi^{(k)}|^2 2^{-n}/10  \big). 
\]
By plugging these two bounds into identity \eqref{eq:pro:est_der_gauss_1}, we obtain that
\[
\left|t^{n+1}\,\partial_t^{n+1}\mathcal F_{\ZZ^d}g_t(\xi)\right| \lesssim_n \sum_{j=0}^n \sum_{k\in[d]} \big(t^{j+1}|\xi_k|^{2(j+1)}+\xi_k^2 e^{-\pi t/5}\big) \exp\big(-\pi t |\xi|^22^{-n}/ 10 \big).
\]
Finally, since $\sum_{k\in[d]} |\xi_k|^{2(j+1)}\le (\sum_{k=1}^d \xi_k^2)^{(j+1)}$ we deduce that  the inequality
\[
\big|t^{n+1}\,\partial_t^{n+1}\mathcal F_{\ZZ^d}g_t(\xi)\big|
\lesssim_n \sum_{j=0}^n \big( t^{j+1}|\xi|^{2(j+1)}+|\xi|^2 \big)
\exp\big(-\pi t |\xi|^22^{-n}/ 10 \big)
\lesssim_n \exp\big(-\pi t |\xi|^22^{-n-1}/ 10 \big),
\]
holds uniformly in $d\in \NN$, $t\ge 15$ and $\xi \in \TT^d$ as
desired, and the proof of Proposition \ref{pro:est_der_gauss} follows.
\end{proof}

\section{Fourier transform estimates at infinity: small scales} \label{sec:Festsmall}

We first prove an analogue of Proposition~\ref{pro:est_inf} for small scales $t>0$.

\begin{proposition} \label{pro:1}
For every fixed  $D>0$ there exists a small constant $c_{D} > 0$ such that
\begin{align} \label{id:125}
\mathcal F_{\ZZ^d}g_t(\xi)
\le  
\exp\big(-c_{D} e^{-\pi/t} |\xi|^2  \big),
\end{align}
holds uniformly in $t\le D$ and $\xi\in\TT^d$.
\end{proposition}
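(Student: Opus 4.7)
The plan is to reduce the $d$-dimensional bound to a one-dimensional statement via the tensorization \eqref{eq:ft3}: if I can establish that for every $D > 0$ there exists $c_D > 0$ such that
\[
\frac{\theta_t(\xi)}{\theta_t(0)} \le \exp\big(-c_D e^{-\pi/t}\xi^2\big), \qquad t\le D,\ \xi\in\TT,
\]
then taking the product over the $d$ coordinates and using \eqref{eq:ft3} immediately yields \eqref{id:125}, since $|\xi|^2 = \sum_{j\in[d]}\xi_j^2$.

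For the one-dimensional inequality, the natural representation in the small-scales regime is the one obtained from Poisson summation, as emphasized around \eqref{eq:ps1}--\eqref{eq:ps2}, namely
\[
\frac{\theta_t(\xi)}{\theta_t(0)} = \frac{1 + 2\sum_{n\ge 1} e^{-\pi n^2/t}\cos(2\pi n\xi)}{\theta_{1/t}(0)}.
\]
Using $1 - \cos(2\pi n \xi) = 2\sin^2(n\pi\xi)$, this rearranges as
\[
1 - \frac{\theta_t(\xi)}{\theta_t(0)} = \frac{4\sum_{n\ge 1} e^{-\pi n^2/t}\sin^2(n\pi\xi)}{\theta_{1/t}(0)}.
\]
Next I would retain only the $n=1$ term in the numerator and apply the elementary bound $\sin(\pi\xi) \ge 2|\xi|$ valid for $\xi \in [-1/2, 1/2]$ to get the lower bound $16\, e^{-\pi/t}\xi^2$. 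For the denominator, Lemma~\ref{lem:1} provides $\theta_{1/t}(0) \le 1 + \sqrt{t} \le 1 + \sqrt{D}$ uniformly for $t \le D$. Combining these with the trivial inequality $1 - s \le e^{-s}$, $s \ge 0$, yields the one-dimensional bound with $c_D = 16/(1 + \sqrt{D})$.

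The argument is fairly direct, and the choice of representation is the essential point: for small $t$ the dominant contribution to the Poisson series comes from the $n=1$ term, which is precisely where the factor $e^{-\pi/t}$ in the statement originates. The only real obstacle is controlling the denominator $\theta_{1/t}(0)$ uniformly in $t\le D$, but this is handled cleanly by Lemma~\ref{lem:1}. The constant $c_D$ obtained this way is certainly not sharp, but its dependence on $D$ is the only one needed in the sequel.
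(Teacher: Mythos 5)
Your proof is correct and follows essentially the same route as the paper's: reduce to $d=1$ by tensorization, rewrite $1-\theta_t(\xi)/\theta_t(0)$ via Poisson summation as a nonnegative sum $4\theta_{1/t}(0)^{-1}\sum_{n\ge1}e^{-\pi n^2/t}\sin^2(\pi n\xi)$, bound it below by the $n=1$ term using $\sin^2(\pi\xi)\ge4\xi^2$ on $[-1/2,1/2]$, control $\theta_{1/t}(0)\le1+\sqrt{D}$ via Lemma~\ref{lem:1}, and finish with $1-s\le e^{-s}$; in fact the explicit constant $c_D=16/(1+\sqrt{D})$ you obtain is exactly the one appearing in the refined Proposition~\ref{pro:1mod} of the appendix. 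One trivial slip: the inequality should read $|\sin(\pi\xi)|\ge 2|\xi|$ (or, what you actually use, $\sin^2(\pi\xi)\ge 4\xi^2$), since $\sin(\pi\xi)$ is negative for $\xi<0$.
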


\begin{proof}
By the product structure of $\mathcal F_{\ZZ^d}g_t(\xi)$ it suffices to prove \eqref{id:125} for $d=1$.
Using \eqref{eq:ps1} and \eqref{eq:ps2} we see that 
\begin{align*}
1 - \frac{\theta_t(\xi)}{\theta_t(0)}=
\theta_{1/t}(0)^{-1}\sum_{j \in \ZZ} e^{-\pi j^2/t}
\big(1 - \cos (2\pi j \xi) \big).
\end{align*}
Using the facts that $1 - \cos 2x = 2\sin^2 x \le 2x^2$ for $x \in \RR$, and $\sin^2 x \simeq x^2$ for $x \in [-\pi/2, \pi/2]$, and the estimate
\[
\sum_{j \in \ZZ} e^{-\pi j^2/t} j^2
\simeq_{D} 
e^{-\pi/t}, \qquad  t \le D,
\]
and Lemma~\ref{lem:1}, we obtain
\begin{align*}
1 - \frac{\theta_t(\xi)}{\theta_t(0)}
\simeq_{D} 
e^{-\pi/t} \xi^2, \qquad t \le D, \quad \xi \in \TT.
\end{align*}
This means that there is a small constant $c_{D} > 0$ such that
\begin{align*}
1 - \frac{\theta_t(\xi)}{\theta_t(0)}
\ge
c_{D}
e^{-\pi/t} \xi^2, \qquad t \le D, \quad \xi \in \TT.
\end{align*}
Taking into account the inequality $1 - x \le e^{-x}$ for $x \ge 0$, we obtain 
\begin{align*}
\frac{\theta_t(\xi)}{\theta_t(0)}
\le
1 - c_{D} e^{-\pi/t} \xi^2
\le
\exp\big(-c_{D} e^{-\pi/t} \xi^2  \big), 
\qquad t \le D, \quad \xi \in \TT.
\end{align*} 
This finishes the proof of Proposition~\ref{pro:1}.
\end{proof}
 
Next, we want to show the following important result.

\begin{proposition}
\label{pro:est_dersmall}
For every fixed $D>0$ and for each $n\in \NN$ we have that
\[
\big|t^{2n} \partial_t^n \mathcal F_{\ZZ^d}g_t(\xi)\big| \lesssim_{n,D} 1,
\]
holds uniformly in $t\le D$ and $\xi\in \TT^d$.
\end{proposition}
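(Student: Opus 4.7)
The strategy mirrors the proof of Proposition~\ref{pro:est_der_gauss} from the large-scale regime, with the role of the decay $e^{-\pi t|\xi|^2}$ now played by the small-scale decay $e^{-c_D e^{-\pi/t}|\xi|^2}$ from Proposition~\ref{pro:1}, and with the prefactor $t^n$ replaced by $t^{2n}$ (reflecting the fact that each $\partial_t$ applied to a function of $e^{-\pi/t}$ generates a factor of $t^{-2}$). Concretely, I would prove by induction on $n\ge 0$ the stronger statement that for every $D>0$ there are constants $C_{n,D},c_{n,D}>0$, independent of the dimension, such that
\[
\big|t^{2n}\partial_t^n \mathcal F_{\ZZ^d}g_t(\xi)\big|\le C_{n,D}\exp\big(-c_{n,D}\, e^{-\pi/t}|\xi|^2\big), \qquad d\in\ZZ_+,\; t\le D,\; \xi\in\TT^d.
\]
The base case $n=0$ is precisely Proposition~\ref{pro:1}.

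The main auxiliary ingredient is the one-dimensional estimate that for every $k\ge 1$ and $D>0$,
\[
\bigg|t^{2k}\partial_t^k\bigg(\frac{\theta_t(\xi)}{\theta_t(0)}\bigg)\bigg|\lesssim_{k,D} \xi^2 e^{-\pi/t}, \qquad t\le D,\; \xi\in\TT.
\]
To derive this, I would invoke the Poisson-summed representation coming from \eqref{eq:ft1} and \eqref{eq:ps1}, which gives $\theta_t(\xi)/\theta_t(0)=1-R_t(\xi)/b_t(0)$ with $b_t(0):=\theta_{1/t}(0)=\sum_{m\in\ZZ}e^{-\pi m^2/t}\ge 1$ and $R_t(\xi):=4\sum_{m\ge 1}e^{-\pi m^2/t}\sin^2(\pi m\xi)$. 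An application of Fa\'a di Bruno's formula \eqref{eq:3} to the composition $t\mapsto e^{-\pi m^2/t}$ yields $|t^{2k}\partial_t^k e^{-\pi m^2/t}|\lesssim_k (m^2 t^{k-1}+m^{2k})e^{-\pi m^2/t}$. Combining this with $\sin^2(\pi m\xi)\le \pi^2 m^2\xi^2$ and the elementary observation that for every $p\ge 0$ one has $\sum_{m\ge 1}m^{2p}e^{-\pi m^2/t}\lesssim_{p,D} e^{-\pi/t}$ for $t\le D$ (the $m=1$ term dominates, since for $m\ge 2$ the ratios $m^{2p}e^{-\pi(m^2-1)/t}\le m^{2p}e^{-\pi(m^2-1)/D}$ sum to a $D$-dependent constant) produces $|t^{2k}\partial_t^k R_t(\xi)|\lesssim_{k,D} \xi^2 e^{-\pi/t}$. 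The same reasoning without the $\sin^2$ factor gives $|t^{2k}\partial_t^k b_t(0)|\lesssim_{k,D} e^{-\pi/t}$ for $k\ge 1$, while $1\le b_t(0)\lesssim_D 1$. A second application of Fa\'a di Bruno to $x\mapsto 1/x$ then yields $|t^{2k}\partial_t^k b_t(0)^{-1}|\lesssim_{k,D} 1$, and a final Leibniz expansion delivers the one-dimensional bound.

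For the induction on $n$, I would use the factorization $\mathcal F_{\ZZ^d}g_t(\xi)=f_k(t)\mathcal F_{\ZZ^{d-1}}g_t(\xi^{(k)})$ with $f_k(t):=\theta_t(\xi_k)/\theta_t(0)$, exactly as in the derivation of \eqref{eq:pro:est_der_gauss_1}. Differentiating $n+1$ times and noting $2(j+1)+2(n-j)=2(n+1)$ yields
\[
t^{2(n+1)}\partial_t^{n+1}\mathcal F_{\ZZ^d}g_t(\xi)=\sum_{k\in[d]}\sum_{j=0}^{n}\binom{n}{j}\big(t^{2(j+1)}\partial_t^{j+1}f_k\big)\big(t^{2(n-j)}\partial_t^{n-j}\mathcal F_{\ZZ^{d-1}}g_t(\xi^{(k)})\big).
\]
The first factor is bounded by $\xi_k^2 e^{-\pi/t}$ up to constants via the 1D estimate, and the second by $C_{n-j,D}\exp(-c_{n-j,D}e^{-\pi/t}|\xi^{(k)}|^2)$ via the inductive hypothesis, which is comparable to $\exp(-c_{n-j,D}e^{-\pi/t}|\xi|^2)$ uniformly in $t\le D$ because $|\xi_k|^2\le 1/4$ and $e^{-\pi/t}\le e^{-\pi/D}$. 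After summing $\sum_{k\in[d]}\xi_k^2=|\xi|^2$, the bound $y e^{-cy}\lesssim_c e^{-cy/2}$ with $y=e^{-\pi/t}|\xi|^2$ absorbs the $|\xi|^2 e^{-\pi/t}$ factor, closing the induction with, say, $c_{n+1,D}:=\tfrac12\min_{0\le j\le n} c_{n-j,D}$ (so $c_{n,D}\ge c_{0,D}/2^n>0$). The most delicate point is obtaining the full decay rate $e^{-\pi/t}$---rather than the weaker $e^{-\pi/(2t)}$ that a careless application of $x^k e^{-\pi x}\lesssim e^{-\pi x/2}$ would produce---in the 1D estimate; any such loss would leave a factor $e^{\pi/(2t)}$ after the absorption step above, which would blow up as $t\to 0^+$ and destroy the dimension-free control.
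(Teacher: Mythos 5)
Your proposal is correct and follows essentially the same strategy as the paper: a one-dimensional bound on $t^{2k}\partial_t^k\bigl(\theta_t(\xi)/\theta_t(0)\bigr)$ obtained from the Poisson-summed representation and Fa\'a di Bruno, followed by induction on $n$ using the tensor factorization and Leibniz, proving along the way the stronger statement with the Gaussian factor $\exp(-c_{n,D}e^{-\pi/t}|\xi|^2)$ exactly as in the paper's Proposition~\ref{pro:G14.1}. The only (cosmetic) deviation is that the paper's Lemma~\ref{lem:G12.1} retains the factor $\exp(-e^{-\pi/t}\xi^2)$ in the one-dimensional bound precisely so that the induction step closes with the single clean inequality $\exp(-e^{-\pi/t}\xi_k^2)\exp(-c\,e^{-\pi/t}|\xi^{(k)}|^2)\le\exp(-\min(1,c)\,e^{-\pi/t}|\xi|^2)$, whereas you drop that factor and instead absorb the discrepancy between $|\xi^{(k)}|^2$ and $|\xi|^2$ into an $(n,D)$-dependent constant via $e^{-\pi/t}\xi_k^2\le e^{-\pi/D}/4$; both variants are valid and lead to the same conclusion.
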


We begin with a one-dimensional variant of Proposition~\ref{pro:est_dersmall}, which reads as follows.
\begin{lemma}
\label{lem:G12.1}
For each $n\in \ZZ_+$ and $D>0$ fixed we have that
\begin{equation}
\label{G12.1}
\bigg| \partial_t^n \bigg(\frac{\theta_t(\xi)}{\theta_t(0)}\bigg)\bigg|
\lesssim_{n,D} t^{-2n} e^{-\pi/t} \xi^2 \exp\big(- e^{-\pi/t} \xi^2 \big),
\end{equation}
holds 
uniformly in $t\le D$ and $\xi\in \TT$.
\end{lemma}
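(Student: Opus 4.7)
My plan is to reduce everything to the Poisson-dual representation already used in the proof of Proposition~\ref{pro:1}. Setting $N(t):=\theta_{1/t}(0)=\sum_{n\in\ZZ}e^{-\pi n^2/t}$ and $\tilde S(t,\xi):=2\sum_{n\ge 1}e^{-\pi n^2/t}\bigl(1-\cos(2\pi n\xi)\bigr)$, formula \eqref{eq:ft1} with $d=1$ yields
\[
\frac{\theta_t(\xi)}{\theta_t(0)}=1-R(t,\xi),\qquad R(t,\xi):=\frac{\tilde S(t,\xi)}{N(t)},
\]
so for $n\in\ZZ_+$ it suffices to bound $\partial_t^n R$. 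Expanding by the Leibniz rule,
\[
\partial_t^n R=\sum_{k=0}^{n}\binom{n}{k}\bigl(\partial_t^k\tilde S\bigr)\bigl(\partial_t^{n-k}(1/N)\bigr),
\]
the task reduces to separate estimates of the two factors.

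The core ingredient is Faà di Bruno's formula \eqref{eq:3} applied to $\exp(-\pi n^2/t)$: with outer $f(y)=e^y$ and inner $y(t)=-\pi n^2/t$, whose derivatives satisfy $y^{(j)}(t)=(-1)^{j-1}j!\,\pi n^2 t^{-j-1}$, one obtains
\[
\bigl|\partial_t^k e^{-\pi n^2/t}\bigr|\le C_k\,e^{-\pi n^2/t}\sum_{l=1}^{k}n^{2l}\,t^{-k-l},\qquad k\in\ZZ_+.
\]
Pairing this with $1-\cos(2\pi n\xi)\le 2\pi^2 n^2\xi^2$ and summing over $n$ via the observation
\[
\sum_{n\ge 1}n^{p}e^{-\pi n^2/t}\;\lesssim_{p,D}\;e^{-\pi/t},\qquad t\le D,
\]
(proved by extracting the $n=1$ term and bounding the tail by $e^{-\pi/t}\sum_{n\ge 2}n^{p}e^{-\pi(n^2-1)/D}$), I would obtain, after summing in $l$ and absorbing $\sum_{l=1}^{k}t^{-k-l}\lesssim_{k,D}t^{-2k}$ (clear for $t\le 1$, harmless for $t\in[1,D]$), the bound
\[
\bigl|\partial_t^k\tilde S(t,\xi)\bigr|\;\lesssim_{k,D}\;t^{-2k}\,e^{-\pi/t}\,\xi^2,\qquad t\le D,\ \xi\in\TT,\ k\in\NN.
\]
The identical machinery applied to $N(t)=1+2\sum_{n\ge 1}e^{-\pi n^2/t}$, without the $(1-\cos)$-gain, gives $|\partial_t^jN(t)|\lesssim_{j,D} t^{-2j}e^{-\pi/t}$ for $j\ge 1$, while $1\le N(t)\le 1+C_D$. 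A second Faà di Bruno, now with outer function $1/x$, then produces
\[
\bigl|\partial_t^{m}(1/N(t))\bigr|\;\lesssim_{m,D}\;t^{-2m}\,e^{-\pi/t},\quad m\ge 1,\qquad |1/N(t)|\le 1,
\]
since every composition term of order $m\ge 1$ contains at least one derivative of $N$, hence at least one factor $e^{-\pi/t}$.

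Plugging both bounds into the Leibniz expansion, the term $k=n$ yields $t^{-2n}e^{-\pi/t}\xi^2$ while each cross term $k<n$ carries an extra $e^{-\pi/t}\le 1$, giving
\[
\bigl|\partial_t^n R(t,\xi)\bigr|\;\lesssim_{n,D}\;t^{-2n}\,e^{-\pi/t}\,\xi^2,\qquad n\in\ZZ_+,\ t\le D.
\]
The $\exp(-e^{-\pi/t}\xi^2)$ factor demanded by the statement is then essentially free: for $t\le D$ and $|\xi|\le 1/2$ one has $e^{-\pi/t}\xi^2\le e^{-\pi/D}/4$, so $\exp(-e^{-\pi/t}\xi^2)$ is bounded below by a positive constant depending only on $D$ and can be inserted on the right-hand side at the cost of enlarging the implicit constant. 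The main technical hurdle is the Faà di Bruno bookkeeping: one has to verify that every non-trivial derivative of $N$ actually carries the $e^{-\pi/t}$ factor (so that the $1/N$ expansion never loses it), and that the prefactor $t^{-2n}$ correctly emerges from the $t$-powers $t^{-(j+1)m_j}$ arising in the composition, which sum to $t^{-n-\sum m_j}$ and are absorbed into $t^{-2n}$ uniformly for $t\in(0,D]$.
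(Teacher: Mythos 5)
Your proposal is correct and mirrors the paper's argument almost exactly: both pass to the Poisson-dual representation $\theta_t(\xi)/\theta_t(0) = 1 - \tilde S(t,\xi)/\theta_{1/t}(0)$ (the paper writes $J_t = -\tilde S$), expand by Leibniz, use Fa\`a di Bruno to get $|\partial_t^k e^{-\pi n^2/t}|\lesssim_{k} t^{-2k} n^{2k} e^{-\pi n^2/t}$ and thence the bounds $|\partial_t^k \tilde S|\lesssim t^{-2k}e^{-\pi/t}\xi^2$ and $|\partial_t^m(1/\theta_{1/t}(0))|\lesssim t^{-2m}e^{-\pi/t}$ for $m\ge1$, and finally note that $\exp(-e^{-\pi/t}\xi^2)$ is bounded below on $t\le D$, $\xi\in\TT$. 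The only cosmetic difference is that you track the exponent $l=\sum m_j$ explicitly and then absorb $\sum_l t^{-k-l}\lesssim t^{-2k}$, whereas the paper bounds the product directly using $\sum m_j\le n$; these are the same estimate.
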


Notice that inequality \eqref{G12.1} is not true for $n=0$, it suffices to take $\xi =0$.

\begin{proof}[Proof of Lemma~\ref{lem:G12.1}]
Observe first that the factor $\exp(- e^{-\pi/t} \xi^2 )$ in
\eqref{G12.1} is irrelevant since $e^{-\pi/t} \xi^2 \le 1$. However, it is useful to keep this factor in the statement of the lemma for the purpose of proving Proposition \ref{pro:G14.1} below. The proof of  Lemma~\ref{lem:G12.1} is technical, so we will proceed in a few steps.

\paragraph{\bf Step 1}
By \eqref{eq:ps1} and \eqref{eq:ps2}, notice that for any
$n \in \ZZ_+$ we have
\begin{align*}
\partial_t^n\,\bigg(\frac{\theta_t(\xi)}{\theta_t(0)}\bigg)
=
\partial_t^n\,\bigg(\frac{J_t(\xi)}{\theta_{1/t} (0)}\bigg),
\end{align*}
where
\begin{align*}
J_t(\xi)
=
2 \sum_{j\in\ZZ_+}
e^{-\pi j^2/t}
\big(\cos (2\pi j \xi) - 1\big).
\end{align*}
By the Leibniz rule we may write
\begin{align}
\label{id:110}
\partial_t^n\,\bigg(\frac{J_t(\xi)}{\theta_{1/t} (0)}\bigg)
=
\sum_{k=0}^{n}{n \choose k}  \partial_t^{k} \big(J_t(\xi)\big) 
\partial_t^{n-k} \bigg(\frac{1}{\theta_{1/t} (0)}\bigg).
\end{align}
In order to prove Lemma~\ref{lem:G12.1} it suffices to show that for
every $k\in\NN_{\le n}$ we have
\begin{align} \label{G13.1}
\bigg| \partial_t^{n-k} \bigg(\frac{1}{\theta_{1/t} (0)}\bigg) \bigg|
& \lesssim_{n,D}
t^{-2(n-k)} e^{-\pi/t} \ind{n-k\ge 1} + \ind{n-k = 0}, 
\qquad t \le D, \\ \label{G13.2}
\big|  \partial_t^{k} (J_t(\xi))  \big|
& \lesssim_{n,D}
t^{-2k} e^{-\pi/t} \xi^2, \qquad t \le D, \quad \xi \in \TT.
\end{align}
Indeed, combining these two estimates with \eqref{id:110} we see that the left hand side of \eqref{G12.1} is controlled by
\begin{align*} 
\sum_{k=0}^{n} t^{-2k} e^{-\pi/t} \xi^2  
\Big( t^{-2(n-k)} e^{-\pi/t} \ind{n-k\ge 1} + \ind{n-k = 0} \Big)
\simeq_{n,D} 
t^{-2n} e^{-\pi/t} \xi^2,
\end{align*}
uniformly in $t \le D$ and $\xi \in \TT$. This proves \eqref{G12.1} and consequently it suffices to justify \eqref{G13.1} and \eqref{G13.2}. 

\paragraph{\bf Step 2} In order to establish estimate \eqref{G13.1} we first show that for any fixed $n \in \ZZ_+$ we have
\begin{equation}
\label{G11.1}
\big|\partial_t^n \big(\theta_{1/t} (0) \big) \big|
\lesssim_{n,D}
t^{-2n} e^{-\pi/t}, \qquad t \le D.
\end{equation}
Indeed, we have
\begin{align} \label{id:109}
\partial_t^n \big(\theta_{1/t} (0) \big)
=
2 \sum_{j\in\ZZ_+}
\partial_t^n \big( e^{-\pi j^2/t} \big).
\end{align}
By the Fa\'a di Bruno formula for the $n$-th derivative  we obtain
\begin{align*}
\partial_t^n \big( e^{-\pi j^2/t} \big)
=
\sum C_{m,n} e^{-\pi j^2/t} 
\prod_{k\in[n]} \bigg(\frac{j^2}{t^{k+1}}\bigg)^{m_k},
\end{align*}
where the summation above runs over all $n$-tuples  $m=(m_1,\ldots,m_{n})\in\NN^n$ satisfying the constraint
$\sum_{k\in[n]} k m_k=n$ and 
$C_{m,n}$ are some universal constants.
Note that
\begin{align*}
\prod_{k\in[n]} \bigg(\frac{j^2}{t^{k+1}}\bigg)^{m_k}
\lesssim_{n,D}
t^{-2n} j^{2n}, \qquad j \in\ZZ_+, \quad t \le D,
\end{align*}
since $\sum_{k\in[n]} m_k \le \sum_{k\in[n]} k m_k=n$. Consequently, we obtain
\begin{align} \label{G11.2}
	\big| \partial_t^n \big( e^{-\pi j^2/t} \big) \big|
	\lesssim_{n,D}
	t^{-2n} j^{2n} e^{-\pi j^2/t}, \qquad j \in\ZZ_+, \quad t \le D.
\end{align}
This together with \eqref{id:109} and a simple estimate
$x^n e^{-x} \lesssim_{n} e^{-x/2}$ for $x>0$, implies that
\begin{align*} 
\big|\partial_t^n \big(\theta_{1/t} (0) \big) \big|
& \lesssim_{n,D}
t^{-2n} \sum_{j\in\ZZ_+} j^{2n} e^{-\pi j^2/t}\\
&\lesssim_{n}
t^{-2n} e^{-\pi /t}
+
t^{-n} \sum_{j\ge 2} e^{-\pi j^2/(2t)} \\
& \lesssim_{D}
t^{-2n} e^{-\pi /t}
+
t^{-n} e^{-2\pi/t}\\
&
\simeq_{n,D}
t^{-2n} e^{-\pi /t}, 
\end{align*}
holds uniformly in $t \le D$. This proves \eqref{G11.1} as desired.

\paragraph{\bf Step 3} Now we are ready to show \eqref{G13.1}. The
case $k=n$ follows easily from Lemma~\ref{lem:1} so from now on we
assume that $k\in\NN_{<n}$. Applying Fa\'a di Bruno's formula for
the $(n-k)$-th derivative we obtain
\begin{align*}
\partial_t^{n-k} \bigg(\frac{1}{\theta_{1/t} (0)}\bigg)
=
\sum D_{m,n-k} \big(\theta_{1/t} (0)\big)^{-m_1-\ldots-m_{n-k} - 1} 
\prod_{l\in [n-k]} \Big( \partial_t^l \big(\theta_{1/t} (0) \big) \Big)^{m_l},
\end{align*}
where the summation above runs over all $(n-k)$-tuples
$m=(m_1,\ldots,m_{n-k})\in \NN^{n-k}$ satisfying the constraint
$\sum_{l\in [n-k]} l m_l=n-k$ and
$D_{m,n-k}$ are some universal constants. 
Using now Lemma~\ref{lem:1} and \eqref{G11.1} we obtain
\eqref{G13.1}.

\paragraph{\bf Step 4} Here we establish estimate \eqref{G13.2}. 
By $1 - \cos 2x = 2\sin^2 x \le 2x^2$ and \eqref{G11.2}, we obtain that
\begin{align*}
\big|  \partial_t^{k} (J_t(\xi))  \big|
& \lesssim_{n,D}
\sum_{j \in \ZZ_+} 
t^{-2k} j^{2k} e^{-\pi j^2/t} j^2 \xi^2
=
t^{-2k} \xi^2 \Big( e^{-\pi/t} 
+ \sum_{j\ge 2} j^{2k+2} e^{-\pi j^2/t}  \Big)  
\simeq_{n,D}
t^{-2k} e^{-\pi/t} \xi^2, 
\end{align*}
holds uniformly in $t \le D$ and $\xi \in \TT$. This proves \eqref{G13.2} and the proof of Lemma~\ref{lem:G12.1} is complete.
\end{proof}

We are now ready to prove Proposition~\ref{pro:est_dersmall}.
Actually we shall prove a stronger statement.

\begin{proposition}
\label{pro:G14.1}
For any  $D>0$ and for each $n\in \NN$ there exist
two constants $c_{n,D}>0$ and $C_{n,D}>0$ independent of the dimension $d\in\ZZ_+$
 such that
\begin{align}
\label{G14.1}
\big|t^{2n} \partial_t^n \mathcal F_{\ZZ^d}{g_t}(\xi)\big|
\le C_{n,D} \exp\big(-c_{n,D} e^{-\pi/t} |\xi|^2 \big),
\end{align}
holds uniformly in $t\le D$ and $\xi\in \TT^d$.
\end{proposition}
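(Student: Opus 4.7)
The plan is to proceed by induction on $n$, closely mirroring the inductive scheme used for large scales in Proposition~\ref{pro:est_der_gauss}, but with Lemma~\ref{lem:G12.1} and Proposition~\ref{pro:1} supplying the ingredients appropriate to small scales. The base case $n=0$ is exactly Proposition~\ref{pro:1}, which gives \eqref{G14.1} with $C_{0,D}=1$ and $c_{0,D}$ the small constant produced there. For the inductive step, assume \eqref{G14.1} holds for all $j \in \NN_{\le n}$ with dimension-free constants $c_{j,D}, C_{j,D}>0$, and aim to establish it for $n+1$.

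Writing $\mathcal F_{\ZZ^d}g_t(\xi^{(k)}) = \prod_{j\in[d]\setminus\{k\}} \theta_t(\xi_j)/\theta_t(0)$ and applying the Leibniz rule twice exactly as in \eqref{eq:pro:est_der_gauss_1}, I obtain
\begin{equation*}
t^{2(n+1)} \partial_t^{n+1} \mathcal F_{\ZZ^d}g_t(\xi)
= \sum_{k\in[d]} \sum_{j=0}^{n} \binom{n}{j}
\Big[t^{2(j+1)}\partial_t^{j+1}\Big(\frac{\theta_t(\xi_k)}{\theta_t(0)}\Big)\Big]
\Big[t^{2(n-j)}\partial_t^{n-j}\mathcal F_{\ZZ^d}g_t(\xi^{(k)})\Big],
\end{equation*}
the key algebraic observation being that $t^{2(n+1)} = t^{2(j+1)} \cdot t^{2(n-j)}$ (contrast with the factor $t^{n+1}=t^{j+1}\cdot t^{n-j}$ in the large-scale argument). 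By Lemma~\ref{lem:G12.1}, the one-dimensional bracket is bounded by $\lesssim_{n,D} e^{-\pi/t}\xi_k^2 \exp(-e^{-\pi/t}\xi_k^2)$, while the inductive hypothesis applied in dimension $d-1$ (or really, by symmetry, to the product $\mathcal F_{\ZZ^d}g_t(\xi^{(k)})$, which has the same tensorized structure) with order $n-j\le n$ bounds the second bracket by $C_{n-j,D} \exp(-c_{n-j,D} e^{-\pi/t}|\xi^{(k)}|^2)$.

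Setting $c' := \min(1, c_{0,D}, \ldots, c_{n,D})$ and using $\xi_k^2 + |\xi^{(k)}|^2 = |\xi|^2$, the product of the two displays is majorized by a constant depending only on $n,D$ times $e^{-\pi/t}\xi_k^2 \exp(-c' e^{-\pi/t}|\xi|^2)$. Summing over $k\in[d]$ and $j\in\NN_{\le n}$ and invoking the crucial identity $\sum_{k\in[d]} \xi_k^2 = |\xi|^2$, I conclude
\begin{equation*}
\big|t^{2(n+1)} \partial_t^{n+1} \mathcal F_{\ZZ^d}g_t(\xi)\big|
\lesssim_{n,D} e^{-\pi/t}|\xi|^2 \exp(-c' e^{-\pi/t}|\xi|^2).
\end{equation*}
Finally, absorbing the polynomial prefactor via the elementary inequality $x e^{-c' x} \lesssim_{c'} e^{-c'x/2}$ for $x\ge 0$ (applied with $x = e^{-\pi/t}|\xi|^2$) closes the induction with $c_{n+1,D} = c'/2$ and some $C_{n+1,D}$ depending only on $n$ and $D$.

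The main subtlety — preserving dimension-independence of the constants — is handled precisely by this combine-and-re-exponentiate manoeuvre: after summing over $k$ one obtains $|\xi|^2$ rather than any quantity growing with $d$, and this is exactly why the (nominally redundant in one dimension) factor $\exp(-e^{-\pi/t}\xi^2)$ was retained in Lemma~\ref{lem:G12.1}. No other obstruction appears, since all the one-dimensional estimates from Lemma~\ref{lem:G12.1} and the base case Proposition~\ref{pro:1} are already dimension-free.
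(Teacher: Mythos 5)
Your proposal is correct and matches the paper's own proof essentially line for line: the same induction on $n$, the same double Leibniz expansion as in \eqref{eq:pro:est_der_gauss_1}, the same use of Lemma~\ref{lem:G12.1} and Proposition~\ref{pro:1}, the same choice of $c_{n+1,D}$ as half the minimum of $1$ and the previous constants, and the same absorption of the prefactor $e^{-\pi/t}|\xi|^2$ via $xe^{-c'x}\lesssim e^{-c'x/2}$. No discrepancy to report.
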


\begin{proof}
We will proceed by induction on $n\in\NN$ in a similar way as in the proof of
Proposition~\ref{pro:est_der_gauss}.   Notice that for
$n=0$ the estimate \eqref{G14.1} follows from
Proposition~\ref{pro:1}. 
Further, using Lemma~\ref{lem:G12.1} we get \eqref{G14.1} for $d=1$ and any $n \in \ZZ_{+}$.
Assume now that \eqref{G14.1} is  true for
$n\in\NN_{\le N}$ for some $N \in \NN$. We show that it  also holds for
$n=N+1$.  Combining \eqref{eq:pro:est_der_gauss_1} with
Lemma~\ref{lem:G12.1} and the induction hypothesis we see that
\begin{equation*}
\begin{split}
t^{2(N+1)} |\partial_t^{N+1}\mathcal F_{\ZZ^d}{g_t}(\xi)| 
& =
\Big|
\sum_{k\in[d]} \sum_{n=0}^{N} {N \choose n} \left[t^{2(n+1)}\partial_t^{n+1} \bigg(\frac{\theta_t(\xi_k)}{\theta_t(0)}\bigg)\right] \left[t^{2(N-n)}\partial_t^{N-n}\mathcal F_{\ZZ^d}{g_t}(\xi^{(k)})\right] \Big| \\
& \lesssim_{N,D} \sum_{k\in[d]} \sum_{n=0}^{N} e^{-\pi/t} \xi_k^2 \exp\big(- e^{-\pi/t} \xi_k^2 \big) \exp\big(-c_{N-n,D} e^{-\pi/t} |\xi^{(k)}|^2 \big),
\end{split}
\end{equation*}
holds uniformly in $t\le D$ and $\xi\in \TT^d$, where
$\xi^{(k)}=(\xi_1,\ldots,\xi_{k-1},\xi_{k+1},\ldots,\xi_d)$ for
$k\in[d]$.  Let
$c_{N+1,D} := \frac{1}{2}\min_{ n \in \NN_{\le N}}{(c_{n,D} \wedge 1)}$ and notice
that we can further bound the above expression by
\begin{equation*}
\begin{split}
\sum_{k\in[d]} e^{-\pi/t} \xi_k^2 \exp\big(-2c_{N+1,D} e^{-\pi/t} |\xi|^2 \big) & =
e^{-\pi/t} |\xi|^2 \exp\big(-2c_{N+1,D} e^{-\pi/t} |\xi|^2 \big) \\
& \lesssim_{N,D} \exp\big(-c_{N+1,D} e^{-\pi/t} |\xi|^2 \big),
\end{split}
\end{equation*}
uniformly in $t\le D$ and $\xi\in \TT^d$. This proves Proposition~\ref{pro:G14.1}.
\end{proof}

As a simple consequence of Proposition~\ref{pro:est_dersmall}, we obtain the following.

\begin{corollary}
\label{cor:est_dersmall}
Let $\psi \colon (0,\infty) \to (0,1)$ be the increasing bijection given by
\begin{equation}
\label{def:psi}
\psi(t):= e^{-\pi/t}, \qquad t>0.
\end{equation}
Then for any  $c \in (0,1)$ and for each $n\in \NN$ we have that
\[
\big|t^{n}  \partial_t^n \mathcal F_{\ZZ^d}{g_{\psi^{-1}(t)}}(\xi)\big|
\lesssim_{n,c} 1,
\]
holds uniformly in $t\le c$ and $\xi\in \TT^d$.	
\end{corollary}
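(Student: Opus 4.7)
The plan is to apply Faà di Bruno's formula to the composition $G(t) := \mathcal F_{\ZZ^d} g_{\psi^{-1}(t)}(\xi) = F(\psi^{-1}(t))$, where $F(s) := \mathcal F_{\ZZ^d} g_s(\xi)$, and reduce everything to Proposition~\ref{pro:est_dersmall}. If $t \le c < 1$, then $s = \psi^{-1}(t) \in (0, D]$ with $D := \psi^{-1}(c) < \infty$, so Proposition~\ref{pro:est_dersmall} immediately yields $|F^{(M)}(s)| \lesssim_{M,D} s^{-2M}$ uniformly in $\xi \in \TT^d$. The case $n = 0$ is trivial since $|F(s)| \le \|g_s\|_{\ell^1} = 1$, so from now on assume $n \ge 1$.

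For $n \ge 1$, by Faà di Bruno's formula \eqref{eq:3},
\[
G^{(n)}(t) = \sum_{(m_1,\ldots,m_n) \in S(n)} \frac{n!}{m_1! \cdots m_n!}\, F^{(M)}(s) \prod_{j \in [n]} \bigg(\frac{(\psi^{-1})^{(j)}(t)}{j!}\bigg)^{m_j},
\]
with $M := m_1 + \cdots + m_n$ and the constraint $\sum j m_j = n$. The remaining task is to control $(\psi^{-1})^{(j)}(t)$. To this end, I would first establish, by induction on $k$ (or by a short Faà di Bruno computation applied to $\psi = \exp(h)$ with $h(s) = -\pi/s$, observing that $\psi'(s) = \pi \psi(s) s^{-2}$), the uniform bound
\[
|\psi^{(k)}(s)| \lesssim_{k,D} \psi(s)\, s^{-2k}, \qquad 0 < s \le D, \quad k \in \ZZ_+.
\]
Plugging this into formula \eqref{id:104} and invoking the constraints $\sum a_j = j-1$ and $\sum j a_j = 2(j-1)$ of the index set $T(j)$, a direct bookkeeping of exponents produces
\[
|(\psi^{-1})^{(j)}(t)| \lesssim_{j,D} \frac{\psi(s)^{j-1}\, s^{-4(j-1)}}{\bigl(\pi \psi(s) s^{-2}\bigr)^{2j-1}} \simeq_j t^{-j}\, s^{2},
\]
since $\psi(s) = t$.

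The conclusion then follows from a striking cancellation. Combining the above two bounds, each term of the Faà di Bruno expansion is controlled by
\[
s^{-2M} \prod_{j \in [n]} (t^{-j} s^2)^{m_j} = s^{-2M} \cdot t^{-\sum j m_j} \cdot s^{2 \sum m_j} = s^{-2M} \cdot t^{-n} \cdot s^{2M} = t^{-n},
\]
where the $s$-factors cancel thanks to $\sum m_j = M$ and $\sum j m_j = n$. Hence $t^n |G^{(n)}(t)| \lesssim_{n,c} 1$ uniformly in $\xi \in \TT^d$ and $t \le c$, as required. The main obstacle is verifying cleanly the intermediate bound $|(\psi^{-1})^{(j)}(t)| \lesssim_{j,D} t^{-j} s^2$; the rest is bookkeeping via \eqref{eq:3}. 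The cancellation of $s^{-2M}$ (from Proposition~\ref{pro:est_dersmall}) against $s^{2M}$ (coming from the inverse derivatives) is precisely the reason why the change of variable $\psi(s) = e^{-\pi/s}$ converts the anomalous small-scale scaling of $g_s$ into the clean scaling $t^{-n}$ needed later for Stein's complex interpolation.
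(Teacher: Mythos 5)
Your proposal is correct and follows essentially the same route as the paper's proof: Faà di Bruno applied to $F\circ\psi^{-1}$, the bound $|\psi^{(k)}(s)|\lesssim_{k}\psi(s)s^{-2k}$ fed into formula \eqref{id:104} to get $|(\psi^{-1})^{(k)}(t)|\lesssim t^{-k}s^{2}$, and the cancellation of $s^{\pm 2M}$ against Proposition~\ref{pro:est_dersmall} using $\sum m_j = M$ and $\sum j m_j = n$. The bookkeeping of exponents matches the paper's computation exactly.
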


\begin{proof}
Notice that $\psi^{-1}(u) = - \frac{\pi}{\log u}$ for
$u \in (0,1)$. Observe that for $n=0$ this result follows directly
from Proposition~\ref{pro:est_dersmall}.  Let us fix
$n \in \mathbb{Z}_+$ and $c \in (0,1)$. By Fa\'a di Bruno's formula \eqref{eq:3} it suffices to 
verify that for every tuple
$(m_1, \ldots, m_n) \in \mathbb{N}^n$ satisfying
$m_1 +2m_2+\ldots + n m_n = n$ we have
\begin{align*}
\Big| \partial_s^{m_1+\ldots+m_n}  \mathcal F_{\ZZ^d}{g_{s}} (\xi) \big|_{s = \psi^{-1}(t)} \Big|
\prod_{k\in[n]} \big| (\psi^{-1})^{(k)} (t) \big|^{m_k}
\lesssim
t^{-n}, \qquad t \le c, \quad \xi \in \TT^d.
\end{align*} 
This, in turn, is equivalent to showing that
\begin{align}
\label{red1}
\big| \partial_u^{m_1+\ldots+m_n} \mathcal F_{\ZZ^d}{g_{u}} (\xi)
\big| \prod_{k\in[n]} \big| (\psi^{-1})^{(k)} (\psi(u)) \big|^{m_k}
\lesssim (\psi(u))^{-n}, \qquad u \le \psi^{-1}(c), \quad \xi \in \TT^d.
\end{align} 
Observe that $|\psi' (u)| \simeq u^{-2} e^{-\pi/u}$ for $u>0$, and for any $j \in \ZZ_+$ we have 
\[
|\psi^{(j)} (u)| \lesssim_{j, c} u^{-2j} e^{-\pi/u}, \qquad u \le \psi^{-1}(c).
\]

Consequently, combining this with \eqref{id:104} for every $k \in \ZZ_+$ (with $T(k)$ as in \eqref{id:104}) we obtain
\begin{align*}
\big| (\psi^{-1})^{(k)} (\psi(u)) \big|
\lesssim_{k}
u^{2(2k-1)} e^{\pi(2k-1)/u} \sum_{s \in T(k)} 
\Big( \prod_{j\in[k]} u^{-2js_j} e^{-\pi s_j/u} \Big)
\lesssim_{k}
u^{2} e^{\pi k/u}, \qquad u \le \psi^{-1}(c).
\end{align*}
By Proposition~\ref{pro:est_dersmall} (with $D = \psi^{-1}(c)$) we see
that the left-hand side of \eqref{red1} is controlled by
\begin{align*} 
\big| \partial_u^{m_1+\ldots+m_n} \mathcal F_{\ZZ^d}{g_{u}}(\xi)  \big|
\prod_{k \in [n]} \big( u^{2} e^{\pi k/u} \big)^{m_k}
=
u^{2(m_1+\ldots+m_n)} \big| \partial_u^{m_1+\ldots+m_n}  \mathcal F_{\ZZ^d}{g_{u}}(\xi)  \big|
e^{\pi n/u}
\lesssim_n
(\psi(u))^{-n}, 
\end{align*} 
uniformly in $u \le \psi^{-1}(c)$ and $\xi \in \TT^d$. This proves \eqref{red1} and Corollary~\ref{cor:est_dersmall} is justified.
\end{proof}

\section{Dimension-free estimate for the difference operator} \label{sec:implp}
Let $\{m_t\}_{t > 0}$ be a family of bounded complex-valued functions on $\TT^d$ such that for each $\xi \in \TT^d$ the mapping
$(0, \infty) \ni t \mapsto m_t(\xi)$ is smooth. Consider the family of
multiplier operators $\{M_t\}_{t > 0}$ defined by
\begin{align}
\label{eq:5}
\mathcal F_{\ZZ^d}{M_t f}(\xi):=m_{t}(\xi)\mathcal F_{\ZZ^d}{f}(\xi),\qquad \xi \in \TT^d.
\end{align}
For $n\in \NN$ we let 
\begin{equation}
\label{eq:Bn}
 B_n(\{m_s\}_{s > 0})
 :=
 \sup_{j\in\NN_{\le n}}\sup_{s> 0}\sup_{\xi\in \TT^d}
\left|s^j \partial_s^j m_s(\xi)\right|+\sup_{s >0}\|M_s\|_{\ell^1\to \ell^1}
+1.
\end{equation}
We added a constant $1$ in \eqref{eq:Bn} only to ensure that $B_n(\{m_s\}_{s >0}) \ge 1$. By the definition
\[
B_0(\{m_s\}_{s > 0})\le B_1(\{m_s\}_{s > 0})\le \ldots\le B_n(\{m_s\}_{s > 0}).
\]

\begin{condition}
	Let $n \in \NN$ be fixed. 
We say that the family $\{m_t\}_{t>0}$ satisfies condition $(B_n)$, if
\[
B_n(\{m_s\}_{s>0})<\infty.
\]
\end{condition}

The following theorem, which is the main result of this section, will
be crucial to prove Theorem \ref{thm:main} for all
$p\in(1, \infty)$. In what follows $q>1$ is the dual exponent to
$p>1$, i.e.\ $1/p+1/q=1.$
\begin{theorem}
\label{thm:implp}
Let $\{M_t\}_{t > 0}$ be a family of operators as in \eqref{eq:5}
with $\{m_t\}_{t>0}$
satisfying condition $(B_n)$ for some $n\in \ZZ_+$. Take $1<p\le 2$
such that $2n/q\le 1$. Then for any $\alpha\in(0,2n/q)$ we have
\begin{equation}
\label{eq:thm:implp}
\left\|M_{t+w}-M_t\right\|_{\ell^p\to \ell^p}
\leq 
C(n,p,\alpha) B_n(\{m_s\}_{s > 0})
\left({w}{t}^{-1}\right)^{\alpha}, \qquad w, t > 0,
\end{equation}
with the constant $C(n,p,\alpha)$ depending only on $n,p$ and
$\alpha$, but independent of the dimension $d\in\ZZ_+$.
\end{theorem}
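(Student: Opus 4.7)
The plan is to invoke Stein's complex interpolation theorem on a suitably designed analytic family of multiplier operators. At one edge of the interpolation strip we extract the $\ell^1\to\ell^1$ control coming from the uniform contraction built into condition $(B_n)$; at the other edge we exploit all $n$ derivative bounds of $m_s$ to gain a factor of $(w/t)^n$ on $\ell^2$. Interpolating between the two edges yields the claimed $\ell^p\to\ell^p$ bound of order $(w/t)^{2n/q}$, with a small $\varepsilon$-loss that accounts for $\alpha$ being strictly below $2n/q$.

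\textbf{Step 1: Reduction and logarithmic change of variable.} First I would reduce to $w\le t$, because for $w>t$ the estimate follows from the elementary bound $\|M_{t+w}-M_t\|_{\ell^p\to\ell^p}\le 2B_n\le 2B_n(w/t)^\alpha$, obtained by Riesz--Thorin interpolation between $\|M_s\|_{\ell^1\to\ell^1}\le B_n$ and the trivial $\ell^\infty\to\ell^\infty$ bound (using $j=0$ in the definition of $B_n$). Next, setting $\tau:=\log s$ and $\tilde m_\tau(\xi):=m_{e^\tau}(\xi)$, the Stirling-number identity relating $s^j\partial_s^j$ and $(s\partial_s)^j$ turns the hypothesis of $(B_n)$ into $\|\partial_\tau^j\tilde m_\tau\|_\infty\lesssim_n B_n$ for $0\le j\le n$, while preserving the $\ell^1\to\ell^1$ bound. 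The target operator becomes $\tilde M_{\tau_0+h_\star}-\tilde M_{\tau_0}$ with $\tau_0=\log t$ and $h_\star=\log(1+w/t)\simeq w/t$.

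\textbf{Step 2: Analytic family via fractional differences.} I would construct an analytic family $\{T_z\}$ of multiplier operators on the strip $\Sigma:=\{z\in\CC:0\le\mathrm{Re}(z)\le n\}$, with symbols $\sigma_z(\xi)$ holomorphic in $z$ and of admissible (at most polynomial) growth in $|\mathrm{Im}(z)|$, so that: (i)~on the line $\mathrm{Re}(z)=0$, the operator $T_z$ is an absolutely convergent combination of $\tilde M_\tau$'s for $\tau$ near $\tau_0$, giving $\|T_z\|_{\ell^1\to\ell^1}\lesssim_n B_n(1+|\mathrm{Im}(z)|)^C$; (ii)~on the line $\mathrm{Re}(z)=n$, the symbol $\sigma_z$ is the $z$-th Weyl-type fractional difference of $\tilde m_\tau$, which on $\ell^2$ gains a factor of $h_\star^n$ by distributing $n$ derivatives onto $\tilde m_\tau$, giving $\|T_z\|_{\ell^2\to\ell^2}\lesssim_n B_n h_\star^n(1+|\mathrm{Im}(z)|)^C$; (iii)~at the interior point $z=n\theta$ with $\theta=2/q$ (so that $1/p=1-\theta/2$), the operator $T_z$ coincides with $\tilde M_{\tau_0+h_\star}-\tilde M_{\tau_0}$, i.e.\ with $M_{t+w}-M_t$. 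A concrete candidate is $\sigma_z(\xi)=e^{z^2}\sum_{k\ge 0}\binom{z}{k}(-1)^k\tilde m_{\tau_0-kh_\star}(\xi)$ with a suitable normalization, or a continuous Weyl integral analogue.

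\textbf{Step 3: Interpolation, conclusion, and main obstacle.} Stein's complex interpolation theorem applied to $\{T_z\}$ across $\Sigma$ then yields, at $z=n\theta$, the bound $\|T_{n\theta}\|_{\ell^p\to\ell^p}\lesssim_{n,p} B_n h_\star^{n\theta}=B_n h_\star^{2n/q}$. Since $h_\star\le w/t$, this gives~\eqref{eq:thm:implp} with exponent $2n/q$; the restriction $\alpha<2n/q$ in the conclusion arises because one must absorb the polynomial-in-$|\mathrm{Im}(z)|$ growth of the endpoint bounds and keep the interpolation point bounded away from the edge $\mathrm{Re}(z)=n$, which forces a small shift $\alpha=2n/q-\varepsilon$. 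The main obstacle is Step~2: producing an analytic family that simultaneously (a)~is bounded on $\ell^1$ along one edge (so only the original $M_s$'s may appear, since only these are known to be $\ell^1$-bounded), (b)~carries the full $n$-th order smoothness gain $h_\star^n$ on $\ell^2$ along the other edge, and (c)~specializes to $M_{t+w}-M_t$ at the interior point. Naively applying Taylor's formula to $m_s$ does not work, because the intermediate-order derivative multipliers $\partial_s^k m_s$ are not known to be $\ell^1$-bounded; the Weyl-type fractional difference construction resolves this by keeping everything expressed in terms of the $M_s$ themselves, with the complex parameter $z$ smoothly bridging the bounded $\ell^1$ regime and the high-order $\ell^2$ regime.
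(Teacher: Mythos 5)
Your overall strategy --- Stein complex interpolation between an $\ell^1$ bound at ``order zero'' and an $\ell^2$ bound at ``order $n$'', with a fractional-calculus object bridging the two edges --- is indeed the skeleton of the paper's argument, but Step~2 of your proposal has a genuine gap that your construction does not resolve. The fatal point is requirement~(iii): you need the analytic family to \emph{coincide} with $M_{t+w}-M_t$ at the interior point $z=n\theta=2n/q$. For your candidate $\sigma_z(\xi)=e^{z^2}\sum_{k\ge0}\binom{z}{k}(-1)^k\tilde m_{\tau_0-kh_\star}(\xi)$, the value at $z=n\theta$ is a fractional difference of order $n\theta$, an infinite linear combination of the $\tilde m_{\tau_0-kh_\star}$, and this is \emph{not} the first difference $\tilde m_{\tau_0+h_\star}-\tilde m_{\tau_0}$ unless $n\theta=1$ exactly (under your standing hypothesis $2n/q\le1$ this is only the boundary case). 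No scalar renormalization fixes this: if the interior value is literally the first difference, the only decay available to distribute along the strip is the single power $\|M_{t+w}-M_t\|_{\ell^2\to\ell^2}\le\sup_\xi|m_{t+w}(\xi)-m_t(\xi)|\lesssim B_1\,w t^{-1}$ that the first difference itself can see, and interpolating that against the $\ell^1$ bound yields only the exponent $2/q$, not $2n/q$. There are also secondary defects on the edges: at $\mathrm{Re}\,z=0$ your series is not absolutely convergent, since $|\binom{iy}{k}|\simeq c_y\,k^{-1}$, so (i) fails as stated; and at $\mathrm{Re}\,z=n+iy$ with $y\neq0$ the sum is still infinite, so the finite-difference identity behind ``distributing $n$ derivatives'' does not directly produce the factor $h_\star^{\,n}$ claimed in (ii).

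What is missing is a \emph{reconstruction step} that decouples the operator you interpolate from the operator you want to bound. The paper sets $h(v)=m_v(\xi)/v$ and uses the fractional integral identity $h(v)=\frac{1}{\Gamma(\alpha)}\int_v^\infty(u-v)^{\alpha-1}D^\alpha h(u)\,du$ to write $M_tf=\int_0^\infty A(t,u)P_u^\alpha f\,du$, where $P_u^{\alpha}$ has multiplier $u^{\alpha+1}D_v^{\alpha}[m_v(\xi)/v]\big|_{v=u}$ and $A(t,u)=\ind{u>t}\,\Gamma(\alpha)^{-1}\,t u^{-2}(1-t/u)^{\alpha-1}$. The gain $(wt^{-1})^{\alpha}$ then comes from the elementary kernel estimate $\int_0^\infty|A(t+w,u)-A(t,u)|\,du\lesssim_\alpha(wt^{-1})^{\alpha}$, while Stein interpolation is applied only to the auxiliary family $P_u^{z}$: integrating by parts $n$ times in the fractional derivative puts all $n$ derivatives on $m_s(\xi)/s$ at $\mathrm{Re}\,z=n-\varepsilon$ (the $\ell^2$ edge), and zero derivatives at $\mathrm{Re}\,z=-\varepsilon$, where $P_u^{z}$ becomes an absolutely convergent superposition of the $\ell^1$-bounded operators $M_s$ (the $\ell^1$ edge). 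To rescue your scheme you would need the analogous identity expressing the first difference as an average of your order-$\alpha$ fractional differences against a kernel of $L^1$-norm $\lesssim(wt^{-1})^{\alpha}$; without such a formula, the interpolation produces uniform bounds for operators that are not $M_{t+w}-M_t$.
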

\begin{remark}
\label{rem:thm:implp}
Now a few remarks are in order.
\begin{enumerate}[label*={\arabic*}.]
\item An analogous result holds for multiplier operators on
$\RR^d$. The proof is essentially the same. It is important to observe
that when dealing with maximal functions for averages over symmetric
convex bodies in $\mathbb{R}^d$, the interesting case arises when the
multipliers being considered are dilations of a single
multiplier. This is not the case on the torus $\TT^d$.

\item For $p=2$, an application of Plancherel's theorem  gives the following improvement of \eqref{eq:thm:implp}, i.e.
\begin{equation}
\label{eq:thm:implp:l2}
\left\|M_{t+w}-M_t\right\|_{\ell^2\to \ell^2}\leq B_1(\{m_s\}_{s > 0}) {w}{t}^{-1},\qquad w, t > 0.
\end{equation}

\item Since $M_t$ is an $\ell^1(\ZZ^d)$ bounded convolution operator we have
$\|M_t\|_{\ell^p\to\ell^p}\le \|M_t\|_{\ell^1\to\ell^1}$ yielding
\[
\left\|M_{t+w}-M_t\right\|_{\ell^p\to \ell^p} \le 2 B_0 (\{m_s\}_{s>0}).
\]
 Hence, \eqref{eq:thm:implp} is obvious for $w>t/2$, and  in the proof of Theorem~\ref{thm:implp} we can assume that $w\le t/2.$
                
\end{enumerate}

\end{remark}

The remainder of this section is devoted to proving
Theorem~\ref{thm:implp}. Inequality \eqref{eq:thm:implp} will be
handled by using fractional integration, which we will recall with
simple facts from \cite{DGM1}.

Fix $\xi\in\TT^d$ and let $h(v) = m_v(\xi)/v$ for
$v\in (0,\infty)$. Then by the Leibniz formula and \eqref{eq:Bn} we have 
\begin{equation}
\label{eq:lem:Pua:1}
\left|\partial_s^n \left(\frac{m_s(\xi)}{s}\right)\right| 
\lesssim_n 
B_n(\{m_s\}_{s >0})\, s^{-n-1}, \qquad s>0, \quad \xi \in \TT^d.
\end{equation}
Notice that $h$ is
locally Lipschitz on $(0,\infty)$, i.e.\ it is Lipschitz on $[t_{0},\infty)$ for every $t_{0} > 0$. 
Thus by \cite[Lemma 6.11]{DGM1} applied with any $t_0 > 0$, for any $\alpha\in(0,1)$ we have 
\[
h(v) = \frac{1}{\Gamma(\alpha)}\int_v^{\infty}(u-v)^{\alpha-1}D^{\alpha} h(u)\,du, \qquad v > 0,
\]
where $D^{\alpha}h$ is the fractional derivative given by
\begin{align}
\label{id:113}
D^{\alpha}h(u) = -\frac{1}{\Gamma(1-\alpha)}\int_u^{\infty} (s-u)^{-\alpha}h'(s)\,ds.
\end{align}
Note that in the statement of \cite[Lemma 6.11]{DGM1} there is a misprint; it should be $h(t)$ in place of $t$ on the right-hand side there.
Recalling the definition of $h$ we see that
\[
m_t(\xi)=\frac{1}{\Gamma(\alpha)}\int_t^{\infty} t (u-t)^{\alpha-1}D^{\alpha}_v\left[\frac{m_v(\xi)}{v}\right]\,\bigg|_{v=u}\, du, \qquad t>0,
\]
where $D^{\alpha}_v[\ \cdot\ ]|_{v=u}$ denotes the
fractional derivative in variable $v$ evaluated at $u$.  We can further write
\begin{align*}
m_t(\xi)
=
\frac{1}{\Gamma(\alpha)}\int_t^{\infty} \frac{t}{u}\left(1-\frac{t}{u}\right)^{\alpha-1}
\left[u^{\alpha+1}D^{\alpha}_v\left[\frac{m_v(\xi)}{v}\right]\,\bigg|_{v=u}\right] \, \frac{du}{u}
=
\int_0^{\infty}A(t,u)p_u^{\alpha}(\xi)\,du,
\end{align*}
with
\begin{align*}
A(t,u):=
\ind{u > t > 0}
\frac1{\Gamma(\alpha)}\frac{t}{u} \Big(1- \frac{t}{u} \Big)^{\alpha-1} \frac1u, 
\qquad t,u>0.
\end{align*}
and the multiplier 
\[
p_u^{\alpha}(\xi)
:=
u^{\alpha+1}D^{\alpha}_v\left[\frac{m_v(\xi)}{v}\right]\,\bigg|_{v=u},
\qquad \xi \in \TT^d.
\]
Observe that thanks to \eqref{id:113} we have
\begin{equation}
\label{eq:thm:implp:1}
	D^{\alpha}_v\left[\frac{m_v(\xi)}{v}\right]\,\bigg|_{v=u}=-\frac{1}{\Gamma(1-\alpha)}\int_u^{\infty}(s-u)^{-\alpha}\partial_s \left(\frac{m_s(\xi)}{s}\right)\, ds.
\end{equation}
By inequality \eqref{eq:lem:Pua:1} and condition $(B_n)$ we obtain that 
\begin{align*}
|p_u^{\alpha}(\xi)|
& \lesssim_{\alpha} 
B_1(\{m_s\}_{s>0})  u^{\alpha+1}\int_u^{\infty} (s-u)^{-\alpha}s^{-2}\,ds  \\
& =  
B_1(\{m_s\}_{s>0})  \int_1^{\infty} (s-1)^{-\alpha}s^{-2}\,ds
\simeq_{\alpha}  
B_1(\{m_s\}_{s>0}),
\qquad u>0, \quad \xi \in \TT^d,
\end{align*}
for $\alpha\in(0,1)$. In other words, the multiplier $p_u^{\alpha}$ is well defined on $\TT^d$ and bounded uniformly in $u>0$. Thus 
for any fixed $\alpha\in(0,1)$ the family of multiplier operators
\[
\mathcal F_{\ZZ^d} P^{\alpha}_u f(\xi)
:=
p_u^{\alpha}(\xi)\mathcal F_{\ZZ^d}{f}(\xi),
\qquad \xi \in \TT^d,
\]
is bounded on $\ell^2(\ZZ^d)$ uniformly in $u>0$. In particular, we have
\begin{align*}
\int_0^{\infty}A(t,u)\|P_u^{\alpha}\|_{\ell^2\to \ell^2}\,du 
& \lesssim_{\alpha}  
B_1(\{m_s\}_{s>0})\int_t^{\infty}A(t,u)\,du \\
&\lesssim_{\alpha} 
B_1(\{m_s\}_{s>0})\int_1^{\infty}u^{-1-\alpha}(u-1)^{\alpha-1} \, du \\
&\simeq_{\alpha}  
B_1(\{m_s\}_{s>0}),
\end{align*}
and thus, for each $\alpha\in(0,1)$ and $t > 0$ the formula
\begin{align}
\label{eq:7}
M_t f=\int_0^{\infty} A(t,u) P_u^{\alpha} f \,du 
\end{align}
makes sense at least for $f\in \ell^2(\ZZ^d)$ as a Bochner integral.

\begin{lemma}
\label{lem:dif}
Let $\{M_t\}_{t > 0}$ be a family of operators as in
\eqref{eq:5}. Then for any $p\in[1, \infty]$ and $\alpha\in(0, 1)$ and every $f\in\ell^p(\ZZ^d)\cap\ell^2(\ZZ^d)$ we
have that
\begin{align}
\label{eq:6}
\left\|M_{t+w}f-M_t f\right\|_{\ell^p}\lesssim_{\alpha}
\left(w{t}^{-1}\right)^{\alpha}
\sup_{u > 0}\big\|P_u^{\alpha} f\big\|_{\ell^p},
\end{align}
whenever $t>0$ and $0<w\le t/2$.
\end{lemma}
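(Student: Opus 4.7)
The plan is to reduce the estimate on $\|M_{t+w}f - M_tf\|_{\ell^p}$ to a purely scalar estimate on the difference of the kernels $A(t+w,u) - A(t,u)$, exploiting the representation \eqref{eq:7}. Writing
\[
M_{t+w}f - M_t f = \int_0^{\infty} \big[A(t+w, u) - A(t, u)\big] P_u^{\alpha} f \, du,
\]
and applying Minkowski's integral inequality (valid for the Bochner integral, which converges in $\ell^2$) gives
\[
\|M_{t+w}f - M_t f\|_{\ell^p} \le \Big(\sup_{u>0} \|P_u^{\alpha} f\|_{\ell^p}\Big) \int_0^{\infty} \big|A(t+w, u) - A(t, u)\big| \, du.
\]
It therefore suffices to prove the dimension-free, function-free kernel bound $\int_0^\infty |A(t+w,u)-A(t,u)|\,du \lesssim_\alpha (w/t)^\alpha$.

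Next I would reduce the kernel bound to a dimensionless quantity depending only on $r := w/t \in (0,1/2]$ via the substitution $u = tv$. Using
\[
A(t,u) \, du = \frac{(v-1)^{\alpha-1}}{\Gamma(\alpha) v^{\alpha+1}} \ind{v>1} \, dv, \qquad A(t+w,u) \, du = \frac{(1+r)(v-1-r)^{\alpha-1}}{\Gamma(\alpha) v^{\alpha+1}} \ind{v>1+r} \, dv,
\]
the task reduces to showing
\[
\int_0^{\infty} \frac{\big|(1+r)(v-1-r)^{\alpha-1}\ind{v>1+r} - (v-1)^{\alpha-1}\ind{v>1}\big|}{v^{\alpha+1}} \, dv \lesssim_{\alpha} r^{\alpha}.
\]
I would split the integration into $(1, 1+2r)$ (near the singularity) and $(1+2r, \infty)$ (away from it). On the first region, I would bound each of the two summands separately by its singular factor $(v-1)^{\alpha-1}$ or $(v-1-r)^{\alpha-1}$, using $v^{-\alpha-1} \le 1$ and direct integration to produce $O(r^\alpha)$. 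On the second region, I would decompose
\[
(1+r)(v-1-r)^{\alpha-1} - (v-1)^{\alpha-1} = r(v-1-r)^{\alpha-1} + \big[(v-1-r)^{\alpha-1} - (v-1)^{\alpha-1}\big],
\]
integrate the first summand against $v^{-\alpha-1}$ to get $O(r) \le O(r^\alpha)$, and bound the second summand via the mean value theorem by $(1-\alpha) r (v-1-r)^{\alpha-2}$; the latter integrates to $O(r^\alpha)$ after evaluating $\int_{1+2r}^\infty (v-1-r)^{\alpha-2}\,dv = r^{\alpha-1}/(1-\alpha)$.

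The main obstacle will be precisely that $A(t,u)$ has a non-integrable singularity $(u-t)^{\alpha-1}$ at $u=t$, so $\partial_t A(t,u)$ cannot be integrated globally in $u$: a naive mean-value argument applied on all of $(t,\infty)$ would produce the divergent integral $\int(v-1-r)^{\alpha-2}\,dv$ near $v=1+r$. This is why the split at $1+2r$ is crucial; close to the singularity one must exploit the integrability of $(v-1)^{\alpha-1}$ itself to extract the $r^\alpha$ factor, reserving the MVT bound for the safe region $(1+2r,\infty)$ where $(v-1-r)^{\alpha-2}$ is integrable. Once the scalar kernel estimate is established, inequality \eqref{eq:6} follows by combining it with the Minkowski reduction of the first paragraph.
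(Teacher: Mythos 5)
Your proposal is correct and follows essentially the same route as the paper: reduce via the representation \eqref{eq:7} and Minkowski's integral inequality to the scalar bound $\int_0^\infty |A(t+w,u)-A(t,u)|\,du \lesssim_\alpha (w/t)^\alpha$, then split near/away from the singularity and use the mean value theorem on the far region. The only differences are cosmetic — your normalization $u=tv$ and merging of the paper's first two regions $(t,t+w)$ and $(t+w,t+2w)$ into one — and your estimates in each region check out.
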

\begin{proof}
Using representation from \eqref{eq:7} and  Minkowski's integral inequality we obtain
\begin{align*}
\left\|M_{t+w}f-M_t f\right\|_{\ell^p}
\le\sup_{u\ge t}\big\|P_u^{\alpha} f\big\|_{\ell^p}  \int_0^{\infty} \Delta_{t,w}(u) \,du,
\end{align*}
where $\Delta_{t,w}(u):=|A(t+w,u)-A(t,u)|$ for $t>0$ and $0< w \le t/2$. For any $\alpha \in (0,1)$ we show that
\begin{align}
\label{eq:8}
\int_0^{\infty} \Delta_{t,w}(u) \,du
\lesssim_{\alpha} 
\left(w{t}^{-1}\right)^{\alpha}.
\end{align}
This will imply \eqref{eq:6} as desired. To obtain the last bound, note that
\begin{align} \label{id:103}
\Delta_{t,w}(u) \lesssim_{\alpha}
\begin{cases}
t |u-t|^{\alpha-1}u^{-\alpha-1},& \text{ if } t\le u\le t+w,\\
t|u-t-w|^{\alpha-1}u^{-\alpha-1},& \text{ if } t+w\le u\le t+2w,\\
w |u-t|^{\alpha-2}u^{-\alpha},& \text{ if } t+2w\le u.
\end{cases}
\end{align}
Indeed, the estimate in the first region is trivial. For $t+w\le u\le t+2w$ we have
\begin{equation*}	
\Delta_{t,w}(u)
\lesssim_{\alpha} 
t |u-t|^{\alpha-1}u^{-\alpha-1}+(t+w)|u-t-w|^{\alpha-1}u^{-\alpha-1}
\simeq_{\alpha} 
t|u-t-w|^{\alpha-1}u^{-\alpha-1}.
\end{equation*}
Finally, for $t+2w\le u$ we use the mean value theorem with $\zeta \in [t/u,(t+w)/u]$ and we obtain
\begin{equation*}
\begin{split}
\Delta_{t,w}(u) & = \left|\frac tu\left(1-\frac tu\right)^{\alpha-1}-\frac{t+w}{u}\left(1-\frac{t+w}{u}\right)^{\alpha-1}\right|
\frac{1}{u\,\Gamma(\alpha)} \\
& \simeq_{\alpha} wu^{-2} \big(s(1-s)^{\alpha-1} \big)'(\zeta) \simeq_{\alpha} w u^{-2} (1-\zeta)^{\alpha-2} \leq
w u^{-2} \left(1-\frac{t+w}{u}\right)^{\alpha-2} \\
& = w|u-t-w|^{\alpha-2}u^{-\alpha} \simeq_{\alpha} w |u-t|^{\alpha-2}u^{-\alpha}.
\end{split}
\end{equation*}
This justifies \eqref{id:103}. Then by \eqref{id:103} the integral from \eqref{eq:8} can be dominated by 
\begin{align*}
\int_t^{t+w} t|u-t|^{\alpha-1}u^{-\alpha-1}\,du+\int_{t+w}^{t+2w} t|u-t-w|^{\alpha-1}u^{-\alpha-1}\,du+w\int_{t+2w}^{\infty}|u-t|^{\alpha-2}u^{-\alpha}\,du,
\end{align*}
which in turn can be dominated by $t^{-\alpha}w^{\alpha} + wt^{-1}\simeq_{\alpha}\left(w{t}^{-1}\right)^{\alpha}$ as desired. 
\end{proof}

By Lemma \ref{lem:dif} the proof of Theorem \ref{thm:implp} is reduced to controlling $\sup_{u > 0}\|P_u^{\alpha} \|_{\ell^p\to \ell^p}.$
\begin{lemma}
\label{lem:Pua}
Let $\{M_t\}_{t > 0}$ be a family of operators as in \eqref{eq:5}
with $\{m_t\}_{t>0}$
satisfying condition $(B_n)$ for some $n\in \ZZ_+$. Take $1<p\le 2$
such that $2n/q\le 1$. Then for any $\alpha\in(0,2n/q)$ we have
\begin{equation}
\label{eq:lem:Pua}
\sup_{u > 0}\|P_u^{\alpha} \|_{\ell^p\to \ell^p}
\leq 
C(n,p,\alpha) B_n(\{m_s\}_{s > 0}),
\end{equation}
with the constant $C(n,p,\alpha)$ depending only on $n,p$ and
$\alpha$, but independent of the dimension $d\in\ZZ_+$.
\end{lemma}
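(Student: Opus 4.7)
The plan is to invoke Stein's complex interpolation theorem on an analytic family of Fourier multipliers indexed by a complex variable $z$ whose real part plays the role of the fractional-derivative order $\alpha$. Since $\alpha \in (0, 2n/q) \subset (0, 1)$, the first-derivative representation in \eqref{eq:thm:implp:1} applies, and I would first integrate it by parts $n - 1$ times to rewrite
\[
p_u^\alpha(\xi) = \frac{(-1)^n u^{\alpha + 1}}{\Gamma(n - \alpha)} \int_u^\infty (s - u)^{n - 1 - \alpha} \partial_s^n\bigg[\frac{m_s(\xi)}{s}\bigg] ds.
\]
All boundary contributions vanish because $\alpha < 1$ (killing the contribution at $s = u$) and because condition $(B_n)$ together with the Leibniz rule yields the decay $|\partial_s^k(m_s(\xi)/s)| \lesssim_k B_n(\{m_s\}_{s > 0})\, s^{-k-1}$ (killing the contribution at infinity). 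Replacing $\alpha$ by $z$ in the above display and using the same bound on the integrand gives a holomorphic extension of $p_u^z(\xi)$ to the vertical strip $-1 < \operatorname{Re}(z) < n$, uniformly in $\xi \in \TT^d$; denote by $P_u^z$ the corresponding Fourier multiplier operator.

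\textbf{Estimates on two vertical lines.} Next I would produce matching $\ell^2$ and $\ell^1$ bounds on the edges of a sub-strip. On the line $\operatorname{Re}(z) = b$ with $b \in (0, n)$, substituting $s = u(1 + r)$ reduces the defining integral to a beta function and produces
\[
|p_u^{b + it}(\xi)| \lesssim_{n, b} \frac{B_n(\{m_s\}_{s > 0})}{|\Gamma(n - b - it)|}, \qquad \xi \in \TT^d, \quad t \in \RR,
\]
which by Plancherel bounds $\|P_u^{b + it}\|_{\ell^2 \to \ell^2}$ by the same quantity; Stirling renders the growth admissible, $\lesssim B_n(\{m_s\}_{s > 0})\, e^{\pi |t|/2}$. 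On the line $\operatorname{Re}(z) = a$ with $a \in (-1, 0)$, integrating the $n$-derivative representation by parts $n$ times in the opposite direction is legitimate (the last step needs $\operatorname{Re}(z) < 0$, the earlier ones are automatic) and produces the alternative form
\[
p_u^z(\xi) = \frac{u^{z + 1}}{\Gamma(-z)} \int_u^\infty (s - u)^{-1 - z} \frac{m_s(\xi)}{s} ds,
\]
exhibiting $P_u^z f$ as a complex superposition of the $\ell^1$-bounded operators $M_s f$. Combining $\|M_s\|_{\ell^1 \to \ell^1} \le B_0(\{m_s\}_{s > 0}) \le B_n(\{m_s\}_{s > 0})$ with the same beta-function substitution yields
\[
\|P_u^{a + it}\|_{\ell^1 \to \ell^1} \lesssim_a \frac{B_n(\{m_s\}_{s > 0})}{|\Gamma(-a - it)|} \lesssim_a B_n(\{m_s\}_{s > 0})\, e^{\pi |t|/2},
\]
again admissible.

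\textbf{Interpolation and the main obstacle.} Setting $\theta := (2 - p)/p$, so that $1/p = (1 - \theta)/2 + \theta$ and $1 - \theta = 2/q$, for each $\alpha \in (0, 2n/q)$ I would pick $a \in (-1, 0)$ and $b \in (0, n)$ with $\alpha = (1 - \theta) b + \theta a$; this is possible precisely because $\alpha < (1 - \theta) n = 2n/q$ (e.g.\ fix any $b \in (\alpha/(1 - \theta), n)$ and solve for $a$). Stein's complex interpolation theorem, applied to the admissible analytic family $z \mapsto P_u^z$ acting on finitely supported test functions in $\ell^1 \cap \ell^2$, then gives
\[
\|P_u^\alpha\|_{\ell^p \to \ell^p} \le C(n, p, \alpha)\, B_n(\{m_s\}_{s > 0}),
\]
uniformly in $u > 0$ and in the dimension $d$. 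The principal technical obstacle is to justify the two integration-by-parts chains underlying the analytic continuation and the $\ell^1$ representation: both hinge on the vanishing of boundary terms, which in turn forces the strip of analyticity to be exactly $-1 < \operatorname{Re}(z) < n$ and thus determines the final range $\alpha < 2n/q$.
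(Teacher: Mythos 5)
Your proof follows essentially the same route as the paper: integrate by parts $n-1$ times starting from the fractional-derivative representation to obtain the holomorphic extension $z \mapsto p_u^z$ on the strip $-1 < \operatorname{Re}(z) < n$, prove admissible $\ell^2$ bounds on one vertical line (via Plancherel and the beta-function substitution) and admissible $\ell^1$ bounds on another (via Minkowski's inequality and the $\ell^1$ boundedness of $M_s$), and invoke Stein's complex interpolation; the paper's only cosmetic difference is that it fixes the two lines at $\operatorname{Re}(z) = -\varepsilon$ and $\operatorname{Re}(z) = n - \varepsilon$ rather than letting $a \in (-1,0)$ and $b \in (0,n)$ float. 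One small slip in your last paragraph: ``fix any $b \in (\alpha/(1-\theta), n)$ and solve for $a$'' does not by itself guarantee $a > -1$ (for $b$ close to $n$ the solved value $a = (\alpha - (1-\theta)b)/\theta$ may fall below $-1$, e.g.\ when $p$ is close to $2$); you should additionally require $b < (\alpha + \theta)/(1-\theta)$, which together with $b > \alpha/(1-\theta)$ and $b<n$ still leaves a nonempty interval, so the argument goes through after this minor correction.
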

Lemma \ref{lem:dif} and Lemma \ref{lem:Pua}  imply Theorem \ref{thm:implp}. Thus, from now on we focus on justifying Lemma \ref{lem:Pua}.

\begin{proof}[Proof of Lemma \ref{lem:Pua}]
Assume that condition $(B_n)$ holds for some $n\in\ZZ_+$. We will
proceed in a few steps for the sake of clarity.  By
\eqref{eq:thm:implp:1}, the multiplier $p_u^{\alpha}(\xi)$
corresponding to $P_u^{\alpha}$ is equal to
\[
p_u^{\alpha}(\xi)= -\frac{u^{\alpha+1}}{\Gamma(1-\alpha)}\int_u^{\infty}(s-u)^{-\alpha}\partial_s \left(\frac{m_s(\xi)}{s}\right)\, ds.
\]
\paragraph{\bf Step 1} 
The family $\{m_t\}_{t>0}$ satisfies condition $(B_n)$, then 
integration by parts $n$-times gives
\begin{align*}
p_u^{\alpha}(\xi)
& =
-\frac{u^{\alpha+1}}{\Gamma(1-\alpha)}\int_u^{\infty}\partial_s\left(\frac{(s-u)^{-\alpha+1}}{1-\alpha}\right)\partial_s \left(\frac{m_s(\xi)}{s}\right)\, ds\\
&=
\frac{u^{\alpha+1}}{(1-\alpha)\Gamma(1-\alpha)}\int_u^{\infty}(s-u)^{-\alpha+1}\partial_s^2 \left(\frac{m_s(\xi)}{s}\right)\, ds \\
&=
\frac{u^{\alpha+1}}{\Gamma(2-\alpha)}\int_u^{\infty}(s-u)^{-\alpha+1}\partial_s^2 \left(\frac{m_s(\xi)}{s}\right)\, ds\\
&=\ldots
=
\frac{(-1)^nu^{\alpha+1}}{\Gamma(n-\alpha)}\int_u^{\infty}(s-u)^{-\alpha+n-1}\partial_s^n \left(\frac{m_s(\xi)}{s}\right)\, ds.
\end{align*}
The above formula allows us to define  a holomorphic extension of $p_u^{\alpha}$ given by
\begin{equation}
\label{eq:lem:Pua:0}
p_u^z(\xi)
:= 
\frac{(-1)^nu^{z+1}}{\Gamma(n-z)}\int_u^{\infty}(s-u)^{-z+n-1}\partial_s^n \left(\frac{m_s(\xi)}{s}\right)\, ds,\qquad \xi\in \TT^d, \quad z\in S,
\end{equation}
on the strip  $S:=\{z\in\CC: -1 < {\rm Re}\,{z} <n\}$. The fact that $S\ni z\mapsto p_u^{z}(\xi)$ is holomorphic  follows from Morera's theorem and Fubini's theorem, since we have \eqref{eq:lem:Pua:1}.

\paragraph{\bf Step 2} Fix $\varepsilon\in (0,1)$ and let
$S_{\varepsilon}:=\{z\in \CC\ \colon -\varepsilon\le {\rm Re}\,{z} \le n-\varepsilon\}.$
The proof of Lemma \ref{lem:Pua} will be based on Stein's complex
interpolation theorem (see e.g.\ \cite[Theorem 1.3.7 and Excercise
1.3.4]{Gra1}) applied to the family of operators
\[
\mathcal F_{\ZZ^d} P_u^{z} f
:=
p_u^{z}  \mathcal F_{\ZZ^d}{f},\qquad z\in S_{\varepsilon}.
\]
Recall that in Stein's complex
interpolation theorem any growth of norms smaller than a
constant times the double exponential
$\exp(\exp(\gamma |{\rm Im}\, z|/n))$ with $0\le \gamma <\pi$ is
admissible. Observe that for $f,g\in \ell^2(\ZZ^d)$ the function
$\langle P_u^z f, g\rangle_{\ell^2}$ is holomorphic in the interior of
$S_\varepsilon$. Indeed, by the Plancherel theorem and Fubini's
theorem we have
\[
\langle P_u^z f, g\rangle_{\ell^2}
=
\frac{(-1)^nu^{z+1}}{\Gamma(n-z)}\int_u^{\infty}(s-u)^{-z+n-1}\int_{\TT^d}\partial_s^n \left(\frac{m_s(\xi)}{s}\right)\,\mathcal F_{\ZZ^d}{f}(\xi)\overline{\mathcal F_{\ZZ^d}{g}(\xi)}\,d\xi\, ds.
\]
The application of Fubini's theorem is allowed by \eqref{eq:lem:Pua:1} since $\mathcal F_{\ZZ^d}{f}\overline{\mathcal F_{\ZZ^d}{g}}\in L^1(\TT^d)$ for $f,g \in \ell^2(\ZZ^d)$. 
Using the bound
\begin{align} \label{id:111} \int_u^{\infty}(s-u)^{-{\rm Re}{z}+n-1} s^{-n-1} \, ds \simeq_{n,\varepsilon} u^{-{\rm Re}{z}-1}, \qquad z\in S_{\varepsilon}, \quad u > 0,
\end{align}
we see that for every $f,g \in \ell^2(\ZZ^d)$ and $z\in S_{\varepsilon}$ we have
\[
\left|\langle P_u^z f, g\rangle_{\ell^2}\right| \lesssim_{n,\varepsilon} \frac{1}{|\Gamma(n-z)|} B_n(\{m_s\}_{s>0}) \|f\|_{\ell^2}\|g\|_{\ell^2} \lesssim_{n,\varepsilon} e^{3\pi |{\rm Im}{z}|/4} C(f,g).
\]
In the last line we have used that ${\rm Re}{(n-z)}\ge \varepsilon$ together with the bound  
\begin{equation}
\label{eq:lem:Pua:2}
|\Gamma(n-z)|
\simeq_{n, \varepsilon}
(|{\rm Im}{z}| + 1)^{n - {\rm Re}{z} - 1/2} e^{- \pi |{\rm Im}{z}|/2}, 
\qquad z\in S_{\varepsilon}, 
\end{equation}
which follows from a well-known asymptotic formula (see e.g.\ \cite[eq.\ 5.11.9]{NIST}) asserting 
\begin{align*}
|\Gamma(x+iy)| 
\simeq_{M,\varepsilon}
(|y| + 1)^{x-1/2} e^{-\pi |y|/2}, \qquad y \in \RR, \quad \varepsilon < x \le M.
\end{align*}
Concluding, we have proved that  for all $f,g\in \ell^2(\ZZ^d)$ the mapping $z\mapsto \langle P_u^z f, g\rangle_{\ell^2}$ defines a holomorphic function of admissible growth in the strip $S_{\varepsilon}.$

\paragraph{\bf Step 3} We shall prove that
\begin{align}
\label{eq:lem:Pua:3}
\|P_u^z\|_{\ell^2\to \ell^2}
&\lesssim_{n,\varepsilon}  
B_n(\{m_s\}_{s>0})   e^{3\pi |{\rm Im}{z}|/4},\qquad {\rm Re}{z}=n-\varepsilon,\\
\label{eq:lem:Pua:4}
\|P_u^z\|_{\ell^1\to \ell^1}
&\lesssim_{\varepsilon}  
B_0(\{m_s\}_{s>0})  e^{3\pi |{\rm Im}{z}|/4},\qquad {\rm Re}{z}=-\varepsilon,	
\end{align}
uniformly in $u>0$. Then, applying Stein's complex interpolation theorem we obtain for each $\theta\in(0,1)$ such that  $1/p=\theta/2+(1-\theta)/1=1-\theta/2,$ the uniform in $u>0$ estimate 
\[
\left\|P_u^{\theta(n-\varepsilon)+(1-\theta)(-\varepsilon)}\right\|_{\ell^p\to \ell^p}
\lesssim_{n,\theta,\varepsilon}B_n(\{m_s\}_{s>0}).
\] 
Since $\theta(n-\varepsilon)+(1-\theta)(-\varepsilon)=\theta n-\varepsilon$ and $\theta=2(p-1)/p=2/q$ the above inequality becomes
\[
\|P_u^{2n/q-\varepsilon}\|_{\ell^p\to \ell^p}
\lesssim_{n,p,\varepsilon}
B_n(\{m_s\}_{s>0}),\qquad u>0.
\]
Finally, since $\varepsilon$ varies over the interval $(0,2n/q) \subseteq (0,1)$, then for each $\alpha\in (0,2n/q)$ we can find $\varepsilon\in(0,2n/q)$ such that $\alpha=2n/q-\varepsilon$ and we have
\[
\|P_u^{\alpha}\|_{\ell^p\to \ell^p}
\lesssim_{n,p,\alpha} 
B_n(\{m_s\}_{s>0}),\qquad u>0,
\]
which yields \eqref{eq:lem:Pua} as desired. We now focus on justifying
\eqref{eq:lem:Pua:3} and \eqref{eq:lem:Pua:4}.

\paragraph{\bf Step 4} We begin with \eqref{eq:lem:Pua:3}. Using
\eqref{eq:lem:Pua:0}, \eqref{eq:lem:Pua:1}, \eqref{id:111} and
\eqref{eq:lem:Pua:2} we obtain, for ${\rm Re}{z}=n-\varepsilon,$ that
\begin{align*}
|p_u^z (\xi)|
&\le  \frac{u^{{\rm Re}{z}+1}}{|\Gamma(n-z)|} \int_u^{\infty}(s-u)^{-{\rm Re}{z}+n-1}\left|\partial_s^n \left(\frac{m_s(\xi)}{s}\right)\right|\, ds\\
&\lesssim_{n} 
B_n(\{m_s\}_{s>0})\frac{u^{{\rm Re}{z}+1}}{|\Gamma(n-z)|}
\int_u^{\infty}(s-u)^{-{\rm Re}{z}+n-1}s^{-n-1}\,ds\\
&\simeq_{n,\varepsilon} 
B_n(\{m_s\}_{s>0})  \frac{1}{|\Gamma(n-z)|} \\
&\lesssim_{n,\varepsilon}   
B_n(\{m_s\}_{s>0})  e^{3\pi |{\rm Im}{z}|/4},
\end{align*}
uniformly in $u > 0$ and $\xi\in \TT^d.$ This estimate together with
Plancherel's theorem gives \eqref{eq:lem:Pua:3}.

\paragraph{\bf Step 5} It remains to prove \eqref{eq:lem:Pua:4}. Since
${\rm Re}{z}=-\varepsilon<0$, then integration by parts $(n+1)$-times gives
\[
p_u^{z}(\xi)= -\frac{u^{z+1}}{\Gamma(1-z)}\int_u^{\infty}(s-u)^{-z}\partial_s \left(\frac{m_s(\xi)}{s}\right)\, ds = -\frac{u^{z+1}z}{\Gamma(1-z)}\int_u^{\infty}(s-u)^{-z-1}\frac{m_s(\xi)}{s}\, ds.
\]
Moreover, using \eqref{eq:lem:Pua:1} and \eqref{eq:lem:Pua:2} it is
easy to see that $p_u^z$ is a bounded function of $\xi\in\TT^d.$
Therefore, for ${\rm Re}{z}=-\varepsilon$ we have
\[
P_u^z f= - \frac{z u^{z+1}}{\Gamma(1-z)}\int_u^{\infty} (s-u)^{-z-1} M_s f\,\frac {ds}{s},
\qquad u > 0,
\]
where the right-hand side is a convergent Bochner integral in
$\ell^2(\ZZ^d).$ Thus, for $f\in \ell^2(\ZZ^d)\cap \ell^1(\ZZ^d)$
using again \eqref{eq:lem:Pua:1} and \eqref{eq:lem:Pua:2} together
with the condition $(B_n)$ we obtain, for $u>0$, that
\begin{align*}
\|P_u^z f\|_{\ell^1}
&\le  
B_0(\{m_s\}_{s>0}) \frac{|z| u^{1-\varepsilon}}{|\Gamma(1-z)|}
\int_u^{\infty}(s-u)^{\varepsilon-1}\,\frac{ds}{s}\|f\|_{\ell^1}\\
&\lesssim_{\varepsilon}  
B_0(\{m_s\}_{s>0})  e^{3\pi |{\rm Im}{z}|/4}\int_{1}^\infty(s-1)^{\varepsilon-1}\,\frac{ds}{s}\|f\|_{\ell^1} \\
&\simeq_{\varepsilon}  
B_0(\{m_s\}_{s>0}) e^{3\pi |{\rm Im}{z}|/4} \|f\|_{\ell^1}.
\end{align*} 
This yields \eqref{eq:lem:Pua:4} and completes the proof of Lemma \ref{lem:Pua}.
\end{proof}
	
We close this section by proving the following useful Corollary that
follows from Theorem \ref{thm:implp}.

\begin{corollary}
\label{cor:implp}
Let $\{M_t\}_{t > 0}$ be a family of operators as in \eqref{eq:5}
with $\{m_t\}_{t>0}$
satisfying condition $(B_n)$ for all $n\in \ZZ_+$.  Take $p\in(1,2),$
$\alpha\in(0,1)$ and $n>\alpha q/2.$ Then
\begin{equation}
\label{eq:cor:implp}
\left\|M_{t+w}-M_t\right\|_{\ell^p\to \ell^p}
\leq 
C(n,p,\alpha) B_n(\{m_s\}_{s>0}) 
\left(w{t}^{-1}\right)^{\alpha},\qquad w,t>0,
\end{equation}
with the constant $C(n,p,\alpha)$ depending only on $n,p$ and
$\alpha$, but independent of the dimension $d\in\ZZ_+$.	
\end{corollary}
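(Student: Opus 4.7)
The plan is to derive Corollary~\ref{cor:implp} from Theorem~\ref{thm:implp} via a case split on the size of $n$ relative to $q/2$, together with Riesz--Thorin interpolation in the harder case. First, invoking Remark~\ref{rem:thm:implp}(3), I will reduce at the outset to $w \le t/2$, so that $w/t \le 1/2$ throughout; the complementary range is handled trivially by the $\ell^1 \to \ell^1$ estimate.

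If $n \le q/2$, then $2n/q \le 1$ and the hypothesis $n > \alpha q/2$ is just $\alpha < 2n/q$. Hence Theorem~\ref{thm:implp} applies directly at $p$ with this $n$ and $\alpha$, and the asserted bound follows immediately with constant $C(n,p,\alpha) B_n(\{m_s\}_{s>0})$.

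Suppose now $n > q/2$. I will choose the auxiliary exponent
\begin{align*}
p_0 := \frac{2n}{2n-1}, \qquad q_0 = 2n,
\end{align*}
so that $2n/q_0 = 1$. An elementary rearrangement shows $p_0 \le p$ is equivalent to $n \ge q/2$, so in this case $p_0 < p$ strictly, with $p_0 \in (1,2)$. Theorem~\ref{thm:implp} then applies at $p_0$, since $2n/q_0 = 1 \le 1$ and $\alpha \in (0,1) = (0,2n/q_0)$, giving
\begin{align*}
\|M_{t+w} - M_t\|_{\ell^{p_0} \to \ell^{p_0}} \lesssim_{n,p,\alpha} B_n(\{m_s\}_{s>0}) (w/t)^\alpha.
\end{align*}
I will combine this with the $\ell^2$ bound from Remark~\ref{rem:thm:implp}(2), namely $\|M_{t+w} - M_t\|_{\ell^2 \to \ell^2} \le B_1(\{m_s\}_{s>0})(w/t)$, noting that $B_1 \le B_n$. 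Riesz--Thorin interpolation at $\theta \in (0,1)$ with $1/p = (1-\theta)/p_0 + \theta/2$ then yields
\begin{align*}
\|M_{t+w} - M_t\|_{\ell^p \to \ell^p} \lesssim_{n,p,\alpha} B_n(\{m_s\}_{s>0}) (w/t)^{\alpha(1-\theta) + \theta}.
\end{align*}
Since $\alpha < 1$, the exponent $\alpha(1-\theta) + \theta \ge \alpha$, and the reduction $w/t \le 1/2$ gives $(w/t)^{\alpha(1-\theta) + \theta} \le (w/t)^\alpha$ up to an absolute constant, producing the claim.

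The only nontrivial step is identifying the right auxiliary exponent $p_0$ with $2n/q_0 = 1$ and checking that the geometric constraint $p_0 \le p$ corresponds exactly to the case $n \ge q/2$; after that, both the applicability of Theorem~\ref{thm:implp} at $p_0$ and the Riesz--Thorin interpolation are mechanical. I do not anticipate a substantive obstacle beyond this bookkeeping.
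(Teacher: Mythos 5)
Your proposal is correct and follows essentially the same route as the paper: the case $2n/q\le 1$ is handled directly by Theorem~\ref{thm:implp}, and for $2n/q>1$ one applies the theorem at the exponent $p_0$ dual to $q_0=2n$ and then interpolates with the $\ell^2$ bound \eqref{eq:thm:implp:l2}. Your explicit observations — that the interpolated exponent $\alpha(1-\theta)+\theta\ge\alpha$ requires the reduction to $w\le t/2$ from Remark~\ref{rem:thm:implp}(3), and that $p_0\le p$ is equivalent to $n\ge q/2$ — are details the paper leaves implicit, and they are correctly handled.
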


\begin{proof}
Fix $p,\alpha$ and $n$ as in the statement of the corollary; in
particular $\alpha<2n/q.$
If $2n/q\le 1,$ then \eqref{eq:cor:implp} is a direct
consequence of Theorem \ref{thm:implp}.
If $2n/q >1$, we take $q_n := 2n > q$. Then, the
dual exponent to $q_n$, call it $p_n,$ satisfies
$p_n<p.$ Hence, applying Theorem \ref{thm:implp} we obtain
\[
\left\|M_{t+w}-M_t\right\|_{\ell^{p_n}\to \ell^{p_n} } \leq C(n,p_n,\alpha) B_n(\{m_s\}_{s>0}) \left(w{t}^{-1}\right)^{\alpha},\qquad w,t>0.
\]
Interpolating the latter bound with \eqref{eq:thm:implp:l2} we obtain
\eqref{eq:cor:implp}. 
This completes the proof.
\end{proof}

\section{Proof of Theorem~\ref{thm:main}}\label{sec:pfmain}
Our objective is to prove Theorem~\ref{thm:main}. By 
\eqref{eq:688} and \eqref{estt1} it suffices to show the jump \eqref{jumpest} and oscillation \eqref{oscest} estimates. 
Let us first make some reduction related to jump estimate  \eqref{jumpest}. Observe that
\begin{align*}
N_{\lambda}(G_t f : t > 0)^{1/2}
\le
N_{\lambda/2}(G_t f : t \ge 16)^{1/2} + N_{\lambda/2}(G_t f : 0<t \le 16)^{1/2}.
\end{align*}

Let $G_t^{>}:=G_t$ for $t\ge16$, and
$G_t^{<}:=G_{\psi^{-1}(\psi(16)t)}$ for $0<t\le 1$ with the
 bijection $\psi$ from \eqref{def:psi}.  Then observe  that
\[
\lambda N_{\lambda}(G_t f : 0<t \le 16)^{1/2}
 =
\lambda N_{\lambda}(G_t^{<} f : 0<t \le 1)^{1/2}.
\]
The latter rescaling has only been introduced for a technical reason
to enable handling the large and small scale cases in a unified
way. Using \cite[Lemma 1.3]{JSW} we can further write 
\begin{align}
	\begin{split}
		\label{eq:77}
\lambda N_{\lambda}(G_t^{>} f : t \ge 16)^{1/2}
& \lesssim
\lambda N_{\lambda/3}(G_{2^n}^{>} f : n \in \mathbb I_{>})^{1/2}
+ 
\Big( \sum_{n \in \mathbb I_{>}} V^2 \big(G_t f^{>} : t\in[2^n, 2^{n+1}] \big)^2\Big)^{1/2}, \\
\lambda N_{\lambda}(G_t f : 0<t \le 16)^{1/2}
& =
\lambda N_{\lambda}(G_t^{<} f : 0< t \le 1)^{1/2} \\
& \lesssim 
\lambda N_{\lambda/3}(G_{2^{n+1}}^{<} f : n\in\mathbb I_{<})^{1/2} 
+ 
\Big( \sum_{n\in\mathbb I_{<}} V^2 \big(G_t^{<} f : t\in[2^n, 2^{n+1}] \big)^2\Big)^{1/2},
\end{split}
\end{align}
where the implicit constants are universal and do not depend on the dimension $d\in\ZZ_+$ and $\mathbb I_{>}:=\ZZ\cap[4, \infty)$ and $\mathbb I_{<}:=\ZZ\cap(-\infty, -1]$.

Proceeding in a similar way as above, and using \cite[(2) from Remark 2.4 and (2.16)]{MiSzWr}, we obtain
\begin{align}
	\begin{split}
	\label{eq:70}
	&	\sup_{J\in\ZZ_+}\sup_{I\in\mathfrak{S}_J( (0,\infty) )}
		\big\| O_{I,J}^2(G_t f : t > 0) \big\|_{\ell^p} \\
	& \qquad	\lesssim 
		\sup_{J\in\ZZ_+}\sup_{I\in\mathfrak{S}_J(I_{>})}
		\big\| O_{I,J}^2(G_{2^n}^{>} f : n \in \mathbb I_{>}) \big\|_{\ell^p}
		+
		\sup_{J\in\ZZ_+}\sup_{I\in\mathfrak{S}_J(I_{<})}
		\big\| O_{I,J}^2(G_{2^{n+1}}^{<} f : n \in \mathbb I_{<}) \big\|_{\ell^p}\\
& \qquad	\quad	+
	\Big\|\Big( \sum_{n \in \mathbb I_{>}} V^2 \big(G_{t}^{>} f : t\in[2^n, 2^{n+1}] \big)^2\Big)^{1/2} \Big\|_{\ell^p}
		+
		\Big\|\Big( \sum_{n \in \mathbb I_{<}} V^2 \big(G_{t}^{<} f : t\in[2^n, 2^{n+1}] \big)^2\Big)^{1/2} \Big\|_{\ell^p}.
	\end{split}
\end{align}

The above estimates \eqref{eq:77} and \eqref{eq:70} reduce the proof of Theorem \ref{thm:main} to the following two propositions.

\begin{proposition} \label{pro:dyad}
For each $p\in(1,\infty)$ there exists $C_{p}>0$ such that for every $f\in \ell^p(\ZZ^d)$ we have 
\begin{align}
\label{id:122}
\begin{split}
\sup_{\lambda>0} \big\| \lambda N_{\lambda}(G_{2^n}^{>} f : n \in \mathbb I_{>})^{1/2} \big\|_{\ell^p}
&\le 
C_{p} \|f\|_{\ell^p}, \\
\sup_{\lambda>0} \big\|\lambda N_{\lambda}(G_{2^{n+1}}^{<} f : n \in \mathbb I_{<})^{1/2}\big\|_{\ell^p}
& \le 
C_{p} \|f\|_{\ell^p}, \\
\sup_{J\in\ZZ_+}\sup_{I\in\mathfrak{S}_J(I_{>})}
\big\| O_{I,J}^2(G_{2^n}^{>} f : n \in \mathbb I_{>}) \big\|_{\ell^p} 
& \le 
C_{p} \|f\|_{\ell^p}, \\
\sup_{J\in\ZZ_+}\sup_{I\in\mathfrak{S}_J(I_{<})}
\big\| O_{I,J}^2(G_{2^{n+1}}^{<} f : n \in \mathbb I_{<}) \big\|_{\ell^p}
& \le 
C_{p} \|f\|_{\ell^p}.
\end{split}
\end{align}
More importantly,  the implied constant $C_p>0$ is independent of the dimension
$d\in\ZZ_+$.
\end{proposition}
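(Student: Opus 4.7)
The plan is to compare the dyadic samples in Proposition \ref{pro:dyad} with dyadic samples of the discrete heat semigroup $(P_t)_{t>0}$ on $\mathbb{Z}^d$, defined as the convolution semigroup with Fourier multiplier $\exp(-t\lambda(\xi))$, where $\lambda(\xi) = 4\sum_{k \in [d]} \sin^2(\pi \xi_k)$ is (a multiple of) the symbol of the discrete Laplacian. Since $(P_t)_{t>0}$ is a symmetric Markovian semigroup of contractions on every $\ell^p(\mathbb{Z}^d)$, the general framework of \cite[Theorem~1.2]{MSZ1} furnishes dimension-free jump and oscillation estimates for the dyadic samples $(P_{2^n})_{n\in\mathbb{Z}}$ on $\ell^p(\mathbb{Z}^d)$ for every $p\in(1,\infty)$.

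One then chooses $a_n^{>} \asymp 2^n$ matching the Gaussian decay rate of Proposition \ref{pro:est_inf} in the large-scale regime, and $a_n^{<} \asymp \psi(16)\cdot 2^{n+1}$ matching the rate in Proposition \ref{pro:1} for the small-scale regime, and sets $R_n^{>} := G_{2^n}^{>} - P_{a_n^{>}}$ and $R_n^{<} := G_{2^{n+1}}^{<} - P_{a_n^{<}}$. The subadditivity $N_\lambda(u_n + v_n) \le N_{\lambda/2}(u_n) + N_{\lambda/2}(v_n)$ of the jump counting function together with its oscillation analogue reduces all four estimates in \eqref{id:122} to proving the dimension-free square function bounds
\begin{align*}
\Big\| \Big( \sum_{n \in \mathbb{I}_{\gtrless}} |R_n^{\gtrless} f|^2 \Big)^{1/2} \Big\|_{\ell^p(\mathbb{Z}^d)} \lesssim_p \|f\|_{\ell^p(\mathbb{Z}^d)}, \qquad p \in (1,\infty),
\end{align*}
since the pointwise inequalities $\lambda N_\lambda(b_n)^{1/2} \le V^2(b_n) \le 2\big(\sum_n |b_n|^2\big)^{1/2}$ and $O_{I, J}^2(b_n) \le V^2(b_n)$ bridge from the square function to jumps and oscillations.

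On $\ell^2$, Plancherel combined with Propositions \ref{pro:est_0}, \ref{pro:est_inf}, and \ref{pro:1} yields the envelope estimate $|\mathcal{F}_{\mathbb{Z}^d} R_n^{\gtrless}(\xi)| \lesssim \min(a_n^{\gtrless}\lambda(\xi), 1) \exp(-c\, a_n^{\gtrless}\lambda(\xi))$, whose squares sum uniformly in $\xi \in \mathbb{T}^d$ (a routine geometric-series computation splitting the sum at the scale $n_0$ where $a_{n_0}\lambda(\xi)\asymp 1$), giving the $\ell^2$ square function bound. To extend this to $\ell^p$ with constants independent of the dimension, the plan is to deploy the fractional-derivative machinery of Section \ref{sec:implp}. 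Both $(G_t)$ and $(P_t)$ satisfy condition $(B_n)$ for every $n$ with dimension-free constants (by Propositions \ref{pro:est_der_gauss}, \ref{pro:G14.1}, and Corollary \ref{cor:est_dersmall} for the Gaussian family, and by direct computation for the semigroup), so Corollary \ref{cor:implp} furnishes the H\"older-type estimate $\|R_{t+w}^{\gtrless} - R_t^{\gtrless}\|_{\ell^p \to \ell^p} \lesssim (w/t)^\alpha$, uniformly in $d$, for every $1<p<\infty$ and $\alpha \in (0,1)$. Combining this with the $\ell^2$ square function estimate via a Khintchine randomisation and interpolation then produces the desired $\ell^p$ square function bound.

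The main obstacle is precisely this dimension-free $\ell^p$-extension of the square function, since a direct vector-valued Calder\'on--Zygmund approach would introduce dimension-dependent constants. The resolution relies decisively on Corollary \ref{cor:implp}: by Khintchine, the $\ell^p$ square function is equivalent, up to universal constants, to the $\ell^p$-norm of the Rademacher sum $\sum_n \varepsilon_n R_n^{\gtrless} f$, whose Fourier multiplier is bounded uniformly by $\sum_n |\mathcal{F}_{\mathbb{Z}^d} R_n^{\gtrless}(\xi)| \lesssim 1$ (a strengthening of the envelope estimate derived above by the same geometric-series split). The fractional-derivative technology then provides the smoothness in the $t$-parameter needed to upgrade the resulting $\ell^2$ bound for the Rademacher multiplier to $\ell^p$ uniformly in the dimension $d\in\mathbb{Z}_+$, completing the proof in both the large-scale and small-scale regimes (the latter being structurally identical after the rescaling $t\mapsto \psi^{-1}(\psi(16)t)$, for which Corollary \ref{cor:est_dersmall} guarantees the same condition $(B_n)$ with constants independent of $d$).
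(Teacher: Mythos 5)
Your overall architecture matches the paper's: compare the dyadic samples of $G_t$ with a discrete heat semigroup (the paper uses $Q_t$ with multiplier $e^{-t\sum_i\sin^2(\pi\xi_i)}$), invoke the known dimension-free jump/oscillation bounds for that semigroup, and reduce everything to a dimension-free $\ell^p$ bound for the square function $\big\|\big(\sum_n|G_{2^n}^{>}f-Q_{2^n}f|^2\big)^{1/2}\big\|_{\ell^p}$ via $\lambda N_\lambda(\cdot)^{1/2}\le V^2(\cdot)\le 2(\sum_n|\cdot|^2)^{1/2}$ and \eqref{eq:62}. The $\ell^2$ case via Plancherel and Propositions~\ref{pro:est_0}, \ref{pro:est_inf}, \ref{pro:1} is also fine.

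The genuine gap is the passage from $\ell^2$ to $\ell^p$ for this square function. Your proposed mechanism --- Khintchine randomisation plus the fractional-derivative machinery of Section~\ref{sec:implp} plus ``interpolation'' --- does not close it. Khintchine converts the square function into the randomised operator $T_\varepsilon=\sum_n\varepsilon_nR_n$, and your envelope estimate $\sum_n|\mathcal F_{\ZZ^d}R_n(\xi)|\lesssim 1$ only controls $T_\varepsilon$ on $\ell^2$; to bound $T_\varepsilon$ on $\ell^p$ uniformly in $d$ you would need a dimension-free multiplier theorem, which is exactly what is not available (H\"ormander--Mikhlin-type theorems carry dimension-dependent constants). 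Corollary~\ref{cor:implp} cannot substitute for it: it gives H\"older continuity $\|M_{t+w}-M_t\|_{\ell^p\to\ell^p}\lesssim(w/t)^\alpha$ of a one-parameter family in the \emph{time} variable, which is the tool the paper uses for the \emph{short-variation} estimates of Proposition~\ref{pro:short}; it says nothing about summability of $\{R_{2^n}\}_n$ over dyadic scales, since each $R_{2^n}$ individually has $\ell^p$ norm $O(1)$. There is also no endpoint $\ell^{p_0}$ square function bound in your sketch against which the $\ell^2$ bound could be interpolated. The paper's actual resolution is the bootstrap theorem \cite[Theorem~2.14]{MZKS1}, applied with $P_k=Q_{2^k}$, $A_k=G_{2^k}^{>}$ and the Littlewood--Paley pieces $S_j=Q_{2^{j-1}}-Q_{2^j}$; its hypotheses are (i) the almost-orthogonality estimate $\big\|\big(\sum_k|(A_k-P_k)S_{k+j}f|^2\big)^{1/2}\big\|_{\ell^2}\lesssim 2^{-|j|/2}\|f\|_{\ell^2}$ (a frequency-localised refinement of your envelope bound), (ii) the uniform bound $\sup_k\|A_k\|_{\ell^1\to\ell^1}\le 1$ from \eqref{eq:Gtcontr}, and (iii) the dimension-free maximal and Littlewood--Paley estimates for $Q_t$ from \cite{BMSW3}. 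Without this (or an equivalent vehicle), your proof does not establish the $\ell^p$ square function bound for $p\ne 2$, and hence none of the four inequalities in \eqref{id:122} beyond $p=2$.
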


\begin{proposition}
\label{pro:short}
For each $p\in(1,\infty)$ there exists $C_{p}>0$ such that for every $f\in \ell^p(\ZZ^d)$ we have 
\begin{align}
\label{id:123}
\begin{split}
\Big\|\Big( \sum_{n \in \mathbb I_{>}} V^2 \big(G_{t}^{>} f : t\in[2^n, 2^{n+1}] \big)^2\Big)^{1/2} \Big\|_{\ell^p}
&\le 
C_{p} \|f\|_{\ell^p},\\
\Big\|\Big( \sum_{n \in \mathbb I_{<}} V^2 \big(G_{t}^{<} f : t\in[2^n, 2^{n+1}] \big)^2\Big)^{1/2} \Big\|_{\ell^p}
&\le 
C_{p} \|f\|_{\ell^p}.
\end{split}
\end{align}
More importantly,  the implied constant $C_p>0$ is independent of the dimension
$d\in\ZZ_+$.
\end{proposition}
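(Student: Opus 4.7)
The plan is to dominate each short variation $V^2(G_t^>f:t\in[2^n,2^{n+1}])$ (and its small-scale analogue) pointwise by an integrated square function, and then establish the resulting dimension-free $g$-function estimate on $\ell^p(\ZZ^d)$ via Plancherel for $p=2$ and via Stein's complex interpolation for general $p$. The elementary inequalities $V^2\le V^1$ and Cauchy--Schwarz give the pointwise bound
\[
V^2(G_t^>f:t\in[2^n,2^{n+1}])^2
\le 2^n\int_{2^n}^{2^{n+1}}|\partial_tG_tf|^2\,dt
\simeq \int_{2^n}^{2^{n+1}}|t\partial_tG_tf|^2\,\frac{dt}{t}.
\]
Summing over $n\in\mathbb I_{>}$ reduces the first inequality in \eqref{id:123} to $\|(\int_{16}^\infty|t\partial_tG_tf|^2\,dt/t)^{1/2}\|_{\ell^p}\lesssim\|f\|_{\ell^p}$. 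For the small scales, set $F_u:=G_{\psi^{-1}(u)}$ and substitute $u=\psi(16)t$: the chain rule yields $t\partial_t G_t^<f=u\partial_u F_uf$, and the second inequality reduces to $\|(\int_0^{\psi(16)}|u\partial_u F_uf|^2\,du/u)^{1/2}\|_{\ell^p}\lesssim\|f\|_{\ell^p}$.

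For $p=2$ I would apply Plancherel. The Leibniz rule applied to the tensor form $\mathcal F_{\ZZ^d}g_t(\xi)=\prod_j\theta_t(\xi_j)/\theta_t(0)$, combined with Lemma~\ref{lem:est_der_1dim} and Corollary~\ref{cor:1}, yields the refined dimension-free bound
\[
|t\partial_t\mathcal F_{\ZZ^d}g_t(\xi)|\lesssim t|\xi|^2\exp(-ct|\xi|^2),\qquad t\ge 16.
\]
The analogous estimate $|u\partial_u\mathcal F_{\ZZ^d}g_{\psi^{-1}(u)}(\xi)|\lesssim u|\xi|^2\exp(-cu|\xi|^2)$ for $u\le\psi(16)$ follows from Lemma~\ref{lem:G12.1} (together with the chain rule $u\partial_u = s^2\partial_s/\pi$ for $s=\psi^{-1}(u)$) and Proposition~\ref{pro:1}. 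In either case the substitution $v=t|\xi|^2$ (resp.\ $v=u|\xi|^2$) yields $\int_0^\infty v^2 e^{-2cv}\,dv/v<\infty$ uniformly in $\xi$ and $d$, closing the case $p=2$ by Plancherel.

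For $p\in(1,\infty)\setminus\{2\}$ I would apply Stein's complex interpolation theorem in the spirit of Section~\ref{sec:implp}, constructing an analytic family of square-function-type operators modelled on the fractional-derivative representation~\eqref{eq:lem:Pua:0} whose distinguished value reconstructs the $g$-function. On one boundary of the strip, Plancherel together with the previous paragraph supplies uniform $\ell^2$ estimates with admissible exponential growth; on the other boundary, the contractivity $\|G_t\|_{\ell^1\to\ell^1}\le 1$, together with condition $(B_n)$ which holds for every $n$ thanks to Propositions~\ref{pro:est_der_gauss} and~\ref{pro:G14.1}, yields uniform $\ell^1$ estimates, and the Hölder bound from Corollary~\ref{cor:implp} supplies the smoothness in $t$ needed to control the family. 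The hard part is precisely this last step: producing a dimension-free $\ell^p$ square-function bound for the non-semigroup family $\{G_t\}$, where classical Stein Littlewood--Paley theory is unavailable and the fractional-derivative substitute developed in Section~\ref{sec:implp} carries the burden.
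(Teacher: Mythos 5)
Your reduction of the short $2$-variations to a continuous $g$-function via $V^2\le V^1$ and Cauchy--Schwarz is fine, and your $p=2$ argument is correct: the bound $|t\partial_t\mathcal F_{\ZZ^d}g_t(\xi)|\lesssim t|\xi|^2\exp(-ct|\xi|^2)$ for $t\ge 16$ does follow from the Leibniz rule, Lemma~\ref{lem:est_der_1dim} and Corollary~\ref{cor:1} (and its small-scale analogue from Lemma~\ref{lem:G12.1} and Proposition~\ref{pro:1}), so Plancherel closes that case. The problem is the step you yourself flag as ``the hard part'': for $p\neq 2$ you propose to run Stein's complex-interpolation scheme for the $g$-function, with ``uniform $\ell^1$ estimates'' on one boundary of the strip coming from $\sup_t\|G_t\|_{\ell^1\to\ell^1}\le 1$ and condition $(B_n)$. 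This does not work as stated. A $g$-function is a quadratic, not a linear, object; the $\ell^1$-contractivity of each individual $G_t$ gives no control whatsoever of $\big(\int|t\partial_tG_tf|^2\,dt/t\big)^{1/2}$ on $\ell^1$ (indeed the $g$-function fails to be bounded on $L^1$ even for the heat semigroup on $\RR^d$). Stein's classical interpolation argument for semigroup $g$-functions uses the semigroup identity $M_t=M_{t/2}M_{t/2}$ and the Hopf--Dunford--Schwartz maximal theorem in an essential way to set up the analytic family; the paper explicitly emphasizes that $G_t$ and its local modification are \emph{not} (local) semigroups, and this is precisely the obstruction. The machinery of Section~\ref{sec:implp} produces operator-norm H\"older bounds for the single differences $M_{t+w}-M_t$, not $\ell^p$ bounds for square functions, so it cannot ``carry the burden'' in the way you describe without a genuinely new construction that you have not supplied.

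The paper circumvents this by never proving an $\ell^p$ $g$-function bound at all. It invokes the bootstrap theorem \cite[Theorem 2.39]{MZKS1}, which controls the short variations from three verifiable inputs: (i) an $\ell^2$ square-function estimate for the subdivided differences $(G^{>}_{2^k+2^{k-l}(m+1)}-G^{>}_{2^k+2^{k-l}m})S_{j+k}f$ with the gain $2^{-l/2}\min(1,2^l2^{-|j|/2})$, obtained by interpolating the Fourier-decay bound (Proposition~\ref{pro:est_inf} plus \eqref{id:105}) against the derivative bound (Proposition~\ref{pro:est_der}); (ii) the H\"older continuity $\|G^{>}_{t+w}-G^{>}_t\|_{\ell^{q_0}\to\ell^{q_0}}\lesssim(w/t)^{\alpha}$ for some $q_0$ near $1$, which is exactly what Corollary~\ref{cor:implp:Gt} (i.e.\ the fractional-derivative interpolation of Section~\ref{sec:implp}) delivers; and (iii) the dyadic maximal bound from Proposition~\ref{pro:dyad}. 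If you want to complete your argument you should either adopt this route or explain concretely how to build and bound an analytic family whose $\ell^1$ (or $\ell^{q_0}$) boundary values are honest bounded linear operators; as written, the $p\neq2$ case is a genuine gap.
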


The proofs of both propositions above will rely on the Fourier
transform estimates from Sections~\ref{sec:Fouest}, \ref{sec:Festlarg}, \ref{sec:Festsmall}, results from
Section~\ref{sec:implp}, and the abstract approach to $L^p$ estimates developed by the
first author with Stein and Zorin-Kranich in \cite{MZKS1}. Similar
concepts have been discussed earlier  by the
first and third author in collaboration with Bourgain and Stein in \cite{BMSW1}. We
will also need dimension-free bounds for discrete symmetric diffusion
semigroups. Namely, for every $t>0$ let $Q_t$ be the semigroup as in \cite{BMSW3}, where
$Q_t$ is defined on the Fourier transform side by 
 $\mathcal F_{\ZZ^d}{Q_t f} = q_t\mathcal F_{\ZZ^d}{f}$ for $f\in\ell^2(\ZZ^d)$, where
\begin{align*}
q_t(\xi):=e^{-t\sum_{i\in[d]}\sin^2(\pi\xi_i)}, \qquad  \xi\in\TT^d, \quad t>0.
\end{align*}
For further use we note that
\[
\sum_{i\in[d]}\sin^2(\pi\xi_i) \simeq |\xi|^2, \qquad \xi\in \TT^d, \quad d \ge 1.
\]
It is well known (see for instance \cite{MSZ1} and \cite{JR} and the
references given there) that for every $p\in(1, \infty)$ there is
$C_{p}>0$ independent of $d\in\ZZ_+$ such that
\begin{align}
\label{eq:489}
\sup_{\lambda>0} \big\| \lambda N_{\lambda}(Q_t f : t>0)^{1/2} \big\|_{\ell^p}
\le 
C_{p} \|f\|_{\ell^p},
\qquad f\in\ell^p(\ZZ^d).
\end{align}
In particular, this together with \eqref{eq:688}, \eqref{estt1} and interpolation ensures 
that for every
$p\in(1, \infty)$ and $r \in (2,\infty)$ there are $C_{p,r}, C_{p}>0$
independent of $d\in\ZZ_+$ such that
\begin{align}
\nonumber
\big\|V^r(Q_t f : t>0)\big\|_{\ell^p} & \le C_{p,r} \|f\|_{\ell^p}, \qquad f\in\ell^p(\ZZ^d), \\
\label{eq:47}
\big\|\sup_{t>0}|Q_t f|\big\|_{\ell^p} & \le C_p \|f\|_{\ell^p}, \qquad f\in\ell^p(\ZZ^d).
\end{align}
On the other hand, the oscillation estimate for $Q_{t}$ follows from \cite[Theorem~3.3]{JR}. More precisely, 
for every
$p\in(1, \infty)$ there is $C_{p}>0$
independent of $d\in\ZZ_+$ such that
\begin{align} \label{id:oscQ}
\sup_{J\in\ZZ_+}\sup_{I\in \mathfrak{S}_J((0,\infty))} 	
\big\| O_{I, J}^{2}( Q_t f : t > 0) \big\|_{\ell^p} \le C_{p} \|f\|_{\ell^p}, \qquad f\in \ell^p(\ZZ^d).
\end{align}
Denoting the Littlewood--Paley ``projections'' by 
\begin{align}
\label{def:Sj}
S_j :=Q_{2^{j-1}} - Q_{2^j}, \qquad j\in \ZZ,
\end{align}
we obtain from \cite[formula (4.7)]{BMSW3} and \cite[formula
(4.8)]{BMSW3} the corresponding Littlewood--Paley decomposition
\begin{align*}
f=\sum_{j\in \ZZ} S_j f \quad\text{converges in $\ell^2(\ZZ^d)$ for every $f \in \ell^2(\ZZ^d)$,}\quad  
\end{align*}
and 
\begin{align*}
\bigg\|\Big(\sum_{j\in \ZZ}|S_j f|^2\Big)^{1/2}\bigg\|_{\ell^p}\leq C_p\|f\|_{\ell^p},
\end{align*}
where the constant $C_p>0$ depends only on $p\in (1,\infty)$, but is
independent of the dimension $d\in\ZZ_+$.

First, we will prove Proposition \ref{pro:dyad}.
\begin{proof}[Proof of Proposition~\ref{pro:dyad}]
We first establish the first inequality in 
\eqref{id:122}.
Observe that
\begin{gather*}
\sup_{\lambda>0} \big\| \lambda N_{\lambda}(G_{2^n}^{>} f : n \in \mathbb I_{>})^{1/2} \big\|_{\ell^p} \lesssim \sup_{\lambda>0} \big\| \lambda N_{\lambda}(Q_{2^n} f : n \in \mathbb I_{>})^{1/2} \big\|_{\ell^p}
+  \Big\| \Big( \sum_{n \in \mathbb I_{>}} \big| G_{2^n}^{>} f - Q_{2^n} f \big|^2\Big)^{1/2} \Big\|_{\ell^p}.
\end{gather*}
In view of \eqref{eq:489} it suffices to bound the square function in
the above inequality.  We  prove that
\begin{align}
\label{eq:9}
\Big\| \Big( \sum_{n \in \mathbb I_{>}} \big| G_{2^n}^{>} f - Q_{2^n} f \big|^2\Big)^{1/2} \Big\|_{\ell^p}\lesssim \|f\|_{\ell^p}
\end{align}
with an implicit constant independent of the dimension $d\in\ZZ_+$. We
will appeal to \cite[Theorem~2.14]{MZKS1} with $X=\ZZ^d$ endowed with
counting measure $\mathfrak m$, and the parameters $q_0=1$ and any
$1<q_1 \le 2$ (it suffices to consider only $q_1$'s close to $1$). We
take $S_j$ given by \eqref{def:Sj} and we put
\begin{align*} 
P_k=Q_{2^k} , \qquad A_k=G_{2^k}^{>} , \qquad k\in \mathbb I_{>}. 
\end{align*}
Then the maximal function $\sup_{k\in \ZZ} |P_k f|$ has a
dimension-free bound on all $\ell^p(\ZZ^d)$ spaces by
\eqref{eq:47}. Moreover, by \eqref{eq:Gtcontr} we have that
$\sup_{k\in \ZZ}\|A_k\|_{\ell^{1}\to \ell^{1}}\le 1$.  It suffices to
show that
\begin{align}
\label{id:128}
\Big\|\Big(\sum_{k\in \mathbb I_{>}}|(A_k-P_{k})S_{k+j}f|^2\Big)^{1/2}\Big\|_{\ell^2}
\lesssim
2^{-|j|/2}\|f\|_{\ell^2}.
\end{align}
Once inequality \eqref{id:128} is established, then
\cite[Theorem~2.14]{MZKS1} can be applied, since $a_j:=2^{-|j|/2}$
satisfies $\sum_{j\in\ZZ}a_j^{\frac{q_1-q_0}{2-q_0}}<\infty$, and then
inequality \eqref{eq:9} follows. To prove \eqref{id:128} we use
\eqref{id:105} and Proposition~\ref{pro:est_inf} together with
Plancherel's theorem to derive
\begin{align*}
&\Big\|\Big(\sum_{k\in\mathbb I_{>}}|(G_{2^k}^{>} - Q_{2^k})S_{k+j}f|^2\Big)^{1/2}\Big\|_{\ell^2}^2 \\
& \qquad \qquad =
\int_{\TT^d}\sum_{k\in\mathbb I_{>}} \big|(\mathcal F_{\ZZ^d}{g_{2^k}}(\xi)-q_{2^k}(\xi))
(q_{2^{k+j-1}}(\xi) - q_{2^{k+j}}(\xi))\big|^2 \cdot |\mathcal F_{\ZZ^d}{f}(\xi)|^2 \,d\xi\\
& \qquad \qquad \lesssim 
\int_{\TT^d}\sum_{k\in\mathbb I_{>}} \big|\min\big(2^k |\xi|^2,(2^k|\xi|^2)^{-1}\big) \big|^2 \cdot \big|\min\big(2^{k+j} |\xi|^{2},(2^{k+j}|\xi|^{2})^{-1}\big) \big|^2\cdot |\mathcal F_{\ZZ^d}{f}(\xi)|^2 \,d\xi \\
& \qquad \qquad \lesssim 
2^{-|j|}\|f\|_{\ell^2}^2.
\end{align*}
This implies \eqref{id:128} as desired. 

To prove the second bound in \eqref{id:122} it suffices to repeat the argument with $G_{2^{k+1}}^{<}$ in place of $G_{2^k}^{>}$ and $\mathbb I_{<}$ in place of $\mathbb I_{>}$ and instead of 
\eqref{id:105} and Proposition~\ref{pro:est_inf} we use respectively \eqref{id:106} and Proposition~\ref{pro:1}. Then the second inequality \eqref{id:122} follows.

To prove the third inequality in \eqref{id:122}, using \eqref{eq:62}, we see that
\begin{align*}
		\sup_{J\in\ZZ_+}\sup_{I\in\mathfrak{S}_J(I_{>})}
		\big\| O_{I,J}^2(G_{2^n}^{>} f : n \in \mathbb I_{>}) \big\|_{\ell^p} 
		& \lesssim
		\sup_{J\in\ZZ_+}\sup_{I\in\mathfrak{S}_J(I_{>})}
\big\| O_{I,J}^2( Q_{2^n} f : n \in \mathbb I_{>}) \big\|_{\ell^p} \\
& \quad +
\Big\| \Big( \sum_{n \in \mathbb I_{>}} \big| G_{2^n}^{>} f - Q_{2^n} f \big|^2\Big)^{1/2} \Big\|_{\ell^p}.
\end{align*}
Taking into account \eqref{id:oscQ} and \eqref{eq:9} we see that the third inequality in \eqref{id:122} holds true. 
The proof of the fourth one can be derived in a similar way using \eqref{id:oscQ} and an analogue of \eqref{eq:9} with $G_{2^{n+1}}^{<}$ in place of $G_{2^{n}}^{>}$ and $\mathbb I_{<}$ in place of $\mathbb I_{>}$.
 The proof of Proposition~\ref{pro:dyad} is finished.
\end{proof} 

Before we prove Proposition \ref{pro:short} we need to establish the following important corollary.

\begin{corollary}
\label{cor:implp:Gt}
For each $p\in (1,2)$ and $\alpha\in(0,1)$ there exists a constant $C(p,\alpha)>0$, depending only on $\alpha$ and $p$, and independent of the dimension $d\in\ZZ_+$ such that  
\begin{align}
\label{eq:cor:implp:Gt}
\|G_{t+w}^{>}-G_t^{>}\|_{\ell^p\to \ell^p}
&\leq
C(p,\alpha) \left(w{t}^{-1}\right)^{\alpha},
\qquad t\ge 16,\quad w>0, \\
\label{id:120}
\big\|G_{t+w}^{<} - G_t^{<} \big\|_{\ell^p\to \ell^p}
&\leq
C(p,\alpha) \left(w{t}^{-1}\right)^{\alpha},\qquad t, w >0, \quad t+w \le 1.	 
\end{align}	
\end{corollary}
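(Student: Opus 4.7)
The plan is to deduce both inequalities from Corollary~\ref{cor:implp}, applied in each case to a suitably reparametrized family $\{\tilde M_s\}_{s>0}$ of convolution operators on $\ZZ^d$. In each case I construct a family defined on the full half-line $s>0$ that (a) agrees with $G_t^{>}$ (resp.\ $G_t^{<}$) after an explicit change of variable, and (b) satisfies condition $(B_n)$ for every $n\in\ZZ_+$ with bounds independent of the dimension $d$. The verification of $(B_n)$ relies on the $\ell^1$-contractivity \eqref{eq:Gtcontr} of $G_t$ together with the dimension-free Fourier-side derivative bounds from Proposition~\ref{pro:est_der_gauss} (large scales) and Corollary~\ref{cor:est_dersmall} (small scales).

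For \eqref{eq:cor:implp:Gt} I would take the additive shift $\tilde M_s := G_{15+s}$ for $s>0$; the corresponding multiplier is $\tilde m_s(\xi)=\mathcal F_{\ZZ^d}g_{15+s}(\xi)$. Writing $t:=15+s\ge 15$, Proposition~\ref{pro:est_der_gauss} gives $|t^j \partial_t^j \mathcal F_{\ZZ^d} g_t(\xi)|\le C_j$, hence
\[
|s^j \partial_s^j \tilde m_s(\xi)| = \Big(\frac{s}{15+s}\Big)^j |t^j \partial_t^j \mathcal F_{\ZZ^d} g_t(\xi)| \le C_j,
\]
uniformly in $s>0$, $\xi\in\TT^d$, $d\in\ZZ_+$. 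Together with $\|\tilde M_s\|_{\ell^1\to\ell^1}\le 1$ from \eqref{eq:Gtcontr}, this verifies $(B_n)$ with constants independent of $d$. Corollary~\ref{cor:implp} then gives $\|\tilde M_{s+w}-\tilde M_s\|_{\ell^p\to\ell^p}\lesssim_{n,p,\alpha}(w/s)^\alpha$ for $s,w>0$. Setting $s=t-15$ with $t\ge 16$ and using the elementary bound $t-15\ge t/16$ produces \eqref{eq:cor:implp:Gt}.

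For \eqref{id:120} I would proceed analogously in the $\psi^{-1}$-parametrization where Corollary~\ref{cor:est_dersmall} supplies the relevant derivative bounds. Setting $c_0:=\psi(16)\in(0,1)$ and $u(s):=c_0 s/(s+1)\in(0,c_0)$, let $\tilde M_s := G_{\psi^{-1}(u(s))}$ for $s>0$; its multiplier is $\tilde m_s(\xi)=h_{u(s)}(\xi)$ with $h_u(\xi):=\mathcal F_{\ZZ^d}g_{\psi^{-1}(u)}(\xi)$. Corollary~\ref{cor:est_dersmall} (with $c=c_0$) gives $|\partial_u^M h_u(\xi)|\lesssim_M u^{-M}$, while a direct computation yields $|u^{(k)}(s)|\lesssim_k (s+1)^{-(k+1)}$. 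Inserting these into Fa\`a di Bruno's formula~\eqref{eq:3}, with the sum running over $(m_1,\dots,m_n)\in\NN^n$ satisfying $\sum_k k m_k=n$ and $M:=m_1+\cdots+m_n$, the identity $\sum_k(k+1)m_k=n+M$ ensures that the singular factor $u(s)^{-M}\simeq s^{-M}(s+1)^M$ is exactly compensated by $\prod_k(s+1)^{-(k+1)m_k}=(s+1)^{-(n+M)}$, leaving $|s^n \partial_s^n \tilde m_s(\xi)|\lesssim_n 1$ uniformly in $s>0$, $\xi\in\TT^d$, $d\in\ZZ_+$. Condition $(B_n)$ is thereby verified, and Corollary~\ref{cor:implp} combined with the translation $s=t/(1-t)$ yields \eqref{id:120} on the region $t+w\le 1/2$, where $w/s=w/[t(1-t-w)]\le 2w/t$. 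The complementary region $t+w\in(1/2,1]$ is handled by a short separate argument: either $w/t$ is bounded below and the trivial bound $\|G_{t+w}^{<}-G_t^{<}\|_{\ell^p\to\ell^p}\le 2$ from \eqref{eq:Gtcontr} suffices, or both $\psi^{-1}(c_0 t)$ and $\psi^{-1}(c_0(t+w))$ lie in a compact subinterval of $(0,\infty)$ bounded away from zero, in which case direct derivative estimates for $G_u$ coming from Propositions~\ref{pro:est_der_gauss} and~\ref{pro:est_dersmall} give the desired bound.

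The principal obstacle I expect is the Fa\`a di Bruno verification of $(B_n)$ for the small-scale family, which hinges on the combinatorial identity $\sum_k(k+1) m_k=n+M$ to cancel the $u^{-M}$ singularity from Corollary~\ref{cor:est_dersmall} against the $(s+1)^{-(n+M)}$ decay from the derivatives of the reparametrization $u(s)=c_0 s/(s+1)$. The gluing between the shift-trick bound (valid on $t+w\le 1/2$) and the direct derivative argument (valid for $t+w\in(1/2,1]$) also requires some care, although it does not introduce genuinely new ingredients.
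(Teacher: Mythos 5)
Your overall architecture coincides with the paper's: both reduce the Corollary to Corollary~\ref{cor:implp} by manufacturing a family $\{M_s\}_{s>0}$ defined on the \emph{whole} half-line that satisfies condition $(B_n)$ with dimension-free constants and agrees with $G^{>}$ (resp.\ $G^{<}$) where needed. The paper achieves this with smooth cutoffs ($m_t=\mathcal F_{\ZZ^d}g_t\,\chi(t)$ with $\ind{[16,\infty)}\le\chi\le\ind{(0,15]^c}$, and analogously for small scales), whereas you use reparametrizations. Your large-scale argument (the shift $\tilde M_s=G_{15+s}$, the identity $s^j\partial_s^j\tilde m_s=(s/(15+s))^j\,[t^j\partial_t^j\mathcal F_{\ZZ^d}g_t]_{t=15+s}$, and the bound $t-15\ge t/16$ for $t\ge16$) is complete and is a perfectly clean substitute for the cutoff. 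Your Fa\`a di Bruno computation for the small-scale family is also correct: with $M=m_1+\cdots+m_n$ one gets $s^n\,u(s)^{-M}\prod_k(s+1)^{-(k+1)m_k}=s^{n-M}(s+1)^{-n}\cdot c_0^{-M}\le c_0^{-n}$ since $M\le n$, so $(B_n)$ does hold for $\tilde M_s=G_{\psi^{-1}(c_0 s/(s+1))}$.

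The genuine gap is in the ``complementary region'' of the small-scale case. Because $u(s)=c_0s/(s+1)<c_0=\psi(16)$ strictly, your family never reaches $G^{<}_t$ for $t$ near $1$ without sending $s\to\infty$, which destroys the comparability $v/s\simeq w/t$; hence the case split. In the leftover region (say $t\in(1/4,1]$, $t+w\le1$, $w/t$ small), the ``direct derivative estimates'' from Propositions~\ref{pro:est_der_gauss} and~\ref{pro:est_dersmall} are Fourier-side bounds and only yield, via Plancherel, $\|G^{<}_{t+w}-G^{<}_t\|_{\ell^2\to\ell^2}\lesssim w$; interpolating with the trivial $\ell^1$ bound gives exponent $2/q$, not an arbitrary $\alpha\in(0,1)$. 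For $p$ close to $1$ and $\alpha>2/q$ this is strictly weaker than what \eqref{id:120} asserts (e.g.\ $p=1.1$, $\alpha=0.9$, $t=0.6$, $w=0.01$). Upgrading $2/q$ to any $\alpha<1$ is exactly what the fractional-derivative/complex-interpolation machinery of Theorem~\ref{thm:implp} was built for, so this region cannot be dismissed as a ``short separate argument'' — it needs Corollary~\ref{cor:implp} applied to a family that actually covers it. The fix is minor: replace $c_0$ in your M\"obius map by some $c'\in(\psi(16),1)$, e.g.\ $c'=(1+\psi(16))/2$ (the same constant the paper feeds into Corollary~\ref{cor:est_dersmall}); then $s(t)=c_0t/(c'-c_0t)$ is a diffeomorphism of $(0,1]$ onto a bounded interval with $s\simeq t$ and $v\simeq w$, and the whole range $t+w\le1$ is handled in one stroke, eliminating the case split. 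With that modification your proof is correct and essentially equivalent to the paper's.
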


\begin{proof}
We first deal with \eqref{eq:cor:implp:Gt}. Let
$\chi \colon (0,\infty) \to [0,1]$ be a smooth function such that
\[
\ind{[16,\infty)}\le \chi\le \ind{(0,15]^c}.
\]
We apply Corollary
\ref{cor:implp} to the family of operators $M_t:=G_t^{>} \chi(t)$, so that
$m_t:=\mathcal F_{\ZZ^d}{g_t} \chi(t)$.  Since $G_t$ is an $\ell^1(\ZZ^d)$ contraction,
see \eqref{eq:Gtcontr}, Proposition \ref{pro:est_der} implies that for
each $n\in \NN$ one has
\begin{align*}
B_n(\{m_s\}_{s>0}) \lesssim_n 1,
\end{align*}
with the implicit constant depending only on $n$ and not on the
dimension $d\in\ZZ_+$. This proves \eqref{eq:cor:implp:Gt}.
	
The proof of \eqref{id:120} is similar, observe that $\psi(16)\in(1/2, 1)$. Let us define
$\eta \colon (0,\infty) \to [0,1]$ to be a smooth function such that
\[
\ind{(0,1]}\le \eta\le \ind{[(\psi(16) + 1)/(2\psi(16)),\infty)^c}.
\]
We apply Corollary~\ref{cor:implp} with
$M_t:=G_t^{<} \eta(t)$, so that
$m_t:=\mathcal F_{\ZZ^d}{g}_t \eta(t)$. Then proceeding as in the previous case, with 
Corollary~\ref{cor:est_dersmall} (taken with
$c=(\psi(16) + 1)/2$) in place of Proposition \ref{pro:est_der}, we obtain \eqref{id:120}.
\end{proof}

We will now move on to proving Proposition \ref{pro:short}.

\begin{proof}[Proof of Proposition \ref{pro:short}] 
We will only establish the first inequality in \eqref{id:123}, the
proof of the second one can be derived with minor changes. We will
appeal to \cite[Theorem 2.39 (2) and (3)]{MZKS1} with $A_t:=G_t^{>}$
and any $q_1 \in (1,2)$. We also take $1<q_0<q_1$ sufficiently close to $1$, and
$\alpha\in(0,1)$ such that $\alpha q_0 \le 1$ and satisfying
\cite[condition (2.46)]{MZKS1}, which asserts
\begin{align}
\label{eq:11}
q_0\le 2-\frac{2-q_0}{2-\alpha q_0} < q_1 \le 2.
\end{align}
In what follows we take $\alpha=1/q_0$ with $q_0$ close enough to $1$
so that \eqref{eq:11} is obviously satisfied.  Since $\{G_t\}_{t>0}$
is a family of positive linear operators, we only need to verify
\cite[conditions (2.40), (2.45) and (2.48) of Theorem
2.39]{MZKS1} to establish the first inequality in \eqref{id:123}. More precisely, we need to
verify that
\begin{align}
\nonumber
\Big\|\Big(\sum_{k\in\mathbb I_{>}} \sum_{m = 0}^{2^{l}-1}|(G_{2^k+{2^{k-l}(m+1)}}^{>} - &G_{2^k+{2^{k-l}m}}^{>})S_{j+k}f|^2 \Big)^{1/2}\Big\|_{\ell^2} \\
\label{2.40} & \lesssim 
2^{-l/2} \min\big(1, 2^{l}2^{-|j|/2}\big) \|f\|_{\ell^2},
\qquad f \in \ell^2(\ZZ^d), \quad l \ge 0, \\
\label{2.45}
\|G_{t+w}^{>} - G_{t}^{>} \|_{\ell^{q_0} \to \ell^{q_0}}  
& \lesssim 
\left(w{t}^{-1}\right)^{\alpha},\qquad t\ge 16,\quad w>0, \\ 	
\label{2.48}
\|\sup_{k\in\mathbb I_{>}}|G_{2^{k}}^{>}f|\|_{\ell^p}
& \lesssim 
\|f\|_{\ell^p}, \qquad f \in \ell^p(\ZZ^d), \quad 1<p<\infty,
\end{align}
where the implicit constants are independent of the dimension $d\in\ZZ_+$.

Observe that \eqref{2.45} follows from inequality \eqref{eq:cor:implp:Gt} of Corollary~\ref{cor:implp:Gt}
applied with $p=q_0$ and $\alpha=1/q_0$. Note that it is vital here
that the constant $C(q_0,1/q_0)$ from Corollary \ref{cor:implp:Gt}
depends only on $q_0$ and not on the dimension $d\in\ZZ_+$. Maximal inequality
\eqref{2.48} has already been proved in
Proposition~\ref{pro:dyad}, see \eqref{eq:688} and \eqref{estt1}. Once inequality \eqref{2.40} is
established, then \cite[Theorem 2.39 (2) and (3)]{MZKS1} can be
applied, since $a_{j,l} := \min(1, 2^{l}2^{-|j|/2})$ satisfies
$\sum_{l\in\NN}\sum_{j\in\ZZ}2^{-\varepsilon l}a_{j, l}^{\rho}<\infty$
for every $0<\varepsilon< \rho$ (this is precisely \cite[condition
(2.41)]{MZKS1}), and then the first inequality from \eqref{id:123} follows as desired. 
Thus, it suffices to prove \eqref{2.40}.

To  prove \eqref{2.40}, we use \eqref{id:105}, Proposition~\ref{pro:est_inf} and argue in a similar way as in the proof of Proposition~\ref{pro:dyad} to derive
\begin{align}
\label{eq:35}
\Big\|\Big(\sum_{k\in\mathbb I_{>}} \sum_{m = 0}^{2^{l}-1}\abs{(G_{2^k+{2^{k-l}(m+1)}}^{>} - G_{2^k+{2^{k-l}m}}^{>})S_{j+k}f}^2 \Big)^{1/2}\Big\|_{\ell^2}
\lesssim
2^{l/2}2^{-|j|/2}\|f\|_{\ell^2}.
\end{align}
Further, notice that Proposition~\ref{pro:est_der} implies, for $k\in\mathbb I_{>}$, the bound 
\begin{align*}
|\mathcal F_{\ZZ^d}{g}_{2^k+{2^{k-l}(m+1)}}(\xi)-\mathcal F_{\ZZ^d}{g}_{2^k+{2^{k-l}m}}(\xi)| 
\le
\int_{2^k+{2^{k-l}m}}^{2^k+{2^{k-l}(m+1)}} 
|t \partial_t \mathcal F_{\ZZ^d}{g}_t(\xi)| \,\frac{dt}{t}
\lesssim 
2^{-l},
\end{align*}
with an implicit constant  independent of the dimension $d\in\ZZ_+$.
Then by Plancherel's theorem we obtain
\begin{align}
\label{eq:37}
\begin{split}
&\Big\|\Big(\sum_{k\in\mathbb I_{>}} \sum_{m = 0}^{2^{l}-1}\abs{(G_{2^k+{2^{k-l}(m+1)}}^{>} - G_{2^k+{2^{k-l}m}}^{>})S_{j+k}f}^2 \Big)^{1/2}\Big\|_{\ell^2} \\
&\qquad =
\Big(\sum_{k\in\mathbb I_{>}} \sum_{m = 0}^{2^{l}-1}\big\|(G_{2^k+{2^{k-l}(m+1)}}^{>} - G_{2^k+{2^{k-l}m}}^{>})S_{j+k}f \big\|_{\ell^2}^2\Big)^{1/2}\\
& \qquad\lesssim \Big(\sum_{k\in\mathbb I_{>}} 2^{-l}\|S_{j+k}f\|_{\ell^2}^2\Big)^{1/2} \lesssim 2^{-l/2}\|f\|_{\ell^2}.
\end{split}
\end{align}
Combining \eqref{eq:35} and \eqref{eq:37} we obtain \eqref{2.40}, and consequently the first inequality from \eqref{id:123} follows.

To prove the second bound in \eqref{id:123} it suffices to repeat the
argument with $G_{t}^{<}$ in place of $G_{t}^{>}$ and $\mathbb I_{<}$
in place of $\mathbb I_{>}$ and instead of \eqref{id:105} and
Proposition~\ref{pro:est_inf} we use respectively \eqref{id:106} and
Proposition~\ref{pro:1}. Furthermore, instead of inequality \eqref{eq:cor:implp:Gt} of Corollary~\ref{cor:implp:Gt} we apply inequality \eqref{id:120} and instead of Proposition \ref{pro:est_der} we use Corollary \ref{cor:est_dersmall}.
Then the second inequality in \eqref{id:123}
follows and the proof is finished.
\end{proof}

\appendix

\section{} \label{sec:app}

To show Proposition~\ref{pro:hTd}, we need to prove a slight improvement of Proposition~\ref{pro:1} with explicit constants.

\begin{proposition} \label{pro:1mod}
	For every fixed  $D>0$ there are constants 
	\begin{align*}
		c_{D}  = \frac{16}{1 + \sqrt{D}},
		\qquad
		C_{D}  = \max\bigg\{ 8 \pi^2  \Big( 1 + \frac{D^2 e^{-\pi/(2D)}}{\pi^2} \Big),  
		8 \pi^3 D\Big( 1 + \frac{D^2 e^{-\pi/(2D)}}{\pi^2}  \Big) \bigg\}
	\end{align*}
	 such that
	\begin{align} \label{id:125m}
		\exp\big(-C_{D} e^{-\pi/t} |\xi|^2  \big)
		\le
		\mathcal F_{\ZZ^d}g_t(\xi)
		\le  
		\exp\big(-c_{D} e^{-\pi/t} |\xi|^2  \big),
	\end{align}
	holds uniformly in $t\le D$ and $\xi\in\TT^d$.
\end{proposition}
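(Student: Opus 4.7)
The plan is to reduce Proposition~\ref{pro:1mod} to a one-dimensional statement via the product structure \eqref{eq:ft3}, and then refine the argument of Proposition~\ref{pro:1} to track constants explicitly. Since $\mathcal F_{\ZZ^d}g_t(\xi) = \prod_{k\in[d]}\theta_t(\xi_k)/\theta_t(0)$, it suffices to establish, for every $\xi \in \TT$ and $t \le D$, the one-dimensional estimates
\[
\exp\bigl(-C_D e^{-\pi/t}\xi^2\bigr) \le \theta_t(\xi)/\theta_t(0) \le \exp\bigl(-c_D e^{-\pi/t}\xi^2\bigr);
\]
the $d$-dimensional version then follows by taking products in each coordinate.

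For the upper bound, I would start from the identity
\[
1-\theta_t(\xi)/\theta_t(0)=\theta_{1/t}(0)^{-1}\sum_{j\in\ZZ}e^{-\pi j^2/t}\bigl(1-\cos(2\pi j\xi)\bigr)
\]
used in the proof of Proposition~\ref{pro:1}, and retain only the $j=\pm 1$ terms. The sharper bound $1-\cos(2\pi\xi) = 2\sin^2(\pi\xi) \ge 8\xi^2$ on $[-1/2,1/2]$ (which follows from $|\sin(\pi\xi)| \ge 2|\xi|$ there), combined with $\theta_{1/t}(0) \le 1+\sqrt{t} \le 1+\sqrt{D}$ from Lemma~\ref{lem:1}, gives $1-\theta_t(\xi)/\theta_t(0) \ge 16\, e^{-\pi/t}\xi^2/(1+\sqrt{D})$; the inequality $1-x \le e^{-x}$ then yields the upper bound with the explicit $c_D = 16/(1+\sqrt{D})$.

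For the lower bound, let $B := 1-\theta_t(\xi)/\theta_t(0)$. Using $1-\cos(2\pi j\xi) \le 2\pi^2 j^2\xi^2$ and $\theta_{1/t}(0) \ge 1$ reduces matters to bounding $\sum_{j\ge1}j^2 e^{-\pi j^2/t}$ from above. Isolating the $j=1$ term and, for $j\ge2$, using $j^2-1 \ge j^2/2$ together with $e^{-\pi(j^2-1)/t} \le e^{-\pi j^2/(2D)}$ for $t \le D$ and the inequality $xe^{-x}\le e^{-x/2}$ from \eqref{id:101} (applied with $x=\pi j^2/(2D)$), one obtains an explicit geometric-type tail estimate that yields
\[
B \le 4\pi^2\xi^2 e^{-\pi/t}\bigl(1+D^2 e^{-\pi/(2D)}/\pi^2\bigr).
\]
When $B \le 1/2$, the standard inequality $-\ln(1-B) \le 2B$ converts this into the lower bound $\theta_t(\xi)/\theta_t(0) \ge \exp\bigl(-8\pi^2(1+D^2 e^{-\pi/(2D)}/\pi^2)\,e^{-\pi/t}\xi^2\bigr)$, giving the first candidate for $C_D$.

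The main obstacle is the complementary regime $B > 1/2$, in which the $-\ln(1-B)$ estimate breaks down. Here I would invoke the independent lower bound $\theta_t(\xi)/\theta_t(0) \ge e^{-\pi\xi^2 t}$, which follows from the representation $\theta_t(\xi) = e^{-\pi\xi^2 t}\sum_{k\in\ZZ}e^{-\pi k^2 t}e^{2\pi k\xi t}$ by pairing $k$ with $-k$ and using $\cosh \ge 1$. The condition $B>1/2$ combined with the preceding bound on $B$ and with $\xi^2 \le 1/4$ forces $e^{\pi/t}$ to be controlled by a constant multiple of $1+D^2 e^{-\pi/(2D)}/\pi^2$, whence $\pi \xi^2 t \le 8\pi^3 D(1+D^2 e^{-\pi/(2D)}/\pi^2)\,e^{-\pi/t}\xi^2$; this produces the second candidate for $C_D$ after exponentiation. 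Taking the maximum of the two candidates yields the stated $C_D$, and the one-dimensional lower bound follows uniformly in $t \le D$ and $\xi \in \TT$.
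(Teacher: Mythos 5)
Your proposal is correct and follows essentially the same route as the paper: reduce to $d=1$ via the product structure, bound $1-\theta_t(\xi)/\theta_t(0)$ from below using only the $j=\pm1$ terms with $\sin^2(\pi\xi)\ge 4\xi^2$ and $\theta_{1/t}(0)\le 1+\sqrt{D}$, bound it from above via $1-\cos(2\pi j\xi)\le 2\pi^2j^2\xi^2$ plus a tail estimate, and then split the lower bound into the regime where $1-x\ge e^{-2x}$ applies and a complementary regime handled by the bound $\theta_t(\xi)/\theta_t(0)\ge e^{-\pi t\xi^2}$ from Lemma~\ref{lem:4} (your case criterion $B\lessgtr 1/2$ versus the paper's $\xi$-free criterion is an immaterial difference). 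The one loose point is the tail sum: the intermediate steps you list (replacing $t$ by $D$ before summing, then $xe^{-x}\le e^{-x/2}$ with $x=\pi j^2/(2D)$) appear to produce a constant a factor of $2$--$4$ larger than the claimed $D^2e^{-\pi/(2D)}/\pi^2$; the paper's chain \eqref{id:333} keeps $t$ throughout, uses $j^2/2\ge j$ to get an exact geometric sum, and then applies the second inequality of \eqref{id:101} to land precisely on $\frac{t^2}{\pi^2}e^{-\pi/t}e^{-\pi/(2t)}$, so you should substitute that computation to match the stated $C_D$ exactly.
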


\begin{proof}
	By the product structure of $\mathcal F_{\ZZ^d}g_t(\xi)$ it suffices to prove \eqref{id:125m} for $d=1$.
	Using \eqref{eq:ps1} and \eqref{eq:ps2} we see that 
	\begin{align*}
		1 - \frac{\theta_t(\xi)}{\theta_t(0)}=
		\theta_{1/t}(0)^{-1}\sum_{j \in \ZZ} e^{-\pi j^2/t}
		\big(1 - \cos (2\pi j \xi) \big).
	\end{align*}
	Using the facts that 
	\begin{align*}
		1 - \cos 2x = 2\sin^2 x \le 2x^2, \quad \text{for $x \in \RR$}, \quad \text{ and } \quad
		 \frac{4 x^2}{\pi^2 } \le \sin^2 x \le x^2, \quad \text{for $x \in [-\pi/2, \pi/2]$},
	\end{align*}
	we obtain
	\begin{align*}
	\theta_{1/t}(0)^{-1}	\frac{16}{\pi^2} \pi^2 \xi^2 e^{-\pi/t}
\le
	1 - \frac{\theta_t(\xi)}{\theta_t(0)}
	\le 
	\theta_{1/t}(0)^{-1} 
	4 \pi^2 \xi^2 \Big( e^{-\pi/t} + \sum_{j =2}^{\infty} e^{-\pi j^2/t} j^2 \Big).
\end{align*}
Further, using \eqref{id:101} for $t>0$ we may write
	\begin{align} \label{id:333}
 \sum_{j =2}^{\infty} e^{-\pi j^2/t} j^2 
 \le 
 \frac{t}{\pi}  \sum_{j =2}^{\infty} e^{-\pi j^2/(2t)} 
 \le
 \frac{t}{\pi}  \sum_{j =2}^{\infty} e^{-\pi j/t} 
\le
\frac{t}{\pi}   e^{-2\pi/t} \frac{1}{1 - e^{-\pi/t}} 
\le 
\frac{t^2}{\pi^2}   e^{-\pi/t} e^{-\pi/(2t)}.
\end{align}
Applying Lemma~\ref{lem:1}, we have $1 \le \theta_{1/t}(0) \le 1 + \sqrt{D}$ for $t \le D$. This together with \eqref{id:333} implies
	\begin{align} \label{id:22222}
	\frac{16 e^{-\pi/t} \xi^2 }{1 + \sqrt{D}}	 
	\le
	1 - \frac{\theta_t(\xi)}{\theta_t(0)}
	\le 
	4 \pi^2 \xi^2 e^{-\pi/t} \Big( 1 + \frac{D^2 e^{-\pi/(2D)}}{\pi^2}  \Big),
	\qquad t \le D, \quad \xi \in \TT.
\end{align}
	Taking into account the inequality $1 - x \le e^{-x}$ for $x \ge 0$, we obtain 
\begin{align*}
	\frac{\theta_t(\xi)}{\theta_t(0)}
	\le
	1 - \frac{16 e^{-\pi/t} \xi^2 }{1 + \sqrt{D}}
	\le
	\exp\Big(-\frac{16 e^{-\pi/t} \xi^2 }{1 + \sqrt{D}} \Big),
	\qquad t \le D, \quad \xi \in \TT.
\end{align*}

Finally, we show the lower bound in \eqref{id:125m}. 
Here we distinguish two cases.

\paragraph{\bf Case 1:} $4 \pi^2 e^{-\pi/t} \Big( 1 + \frac{D^2 e^{-\pi/(2D)}}{\pi^2}  \Big) \le 1/2$. Then using the inequality $1 - x \ge e^{-2x}$ for $x \in [0,1/2]$, and \eqref{id:22222} we obtain
\begin{align*}
	\frac{\theta_t(\xi)}{\theta_t(0)}
	&\ge
	1 - 4 \pi^2 \xi^2 e^{-\pi/t} \Big( 1 + \frac{D^2 e^{-\pi/(2D)}}{\pi^2}  \Big) \\
	&\ge
	\exp\bigg(-8 \pi^2 \xi^2 e^{-\pi/t} \Big( 1 + \frac{D^2 e^{-\pi/(2D)}}{\pi^2}  \Big) \bigg).
\end{align*}
This gives the lower bound in \eqref{id:125m} in Case 1.

\paragraph{\bf Case 2:} $4 \pi^2 e^{-\pi/t} \Big( 1 + \frac{D^2 e^{-\pi/(2D)}}{\pi^2}  \Big) \ge 1/2$. This case is nonempty if there is $T_0 \le D$ such that $4 \pi^2 e^{-\pi/T_{0}} \Big( 1 + \frac{D^2 e^{-\pi/(2D)}}{\pi^2}  \Big) = 1/2$. Then our aim is to show the lower bound in \eqref{id:125m} for $t \in [T_0,D]$. Here we proceed in a different way than in Case 1. We use Lemma~\ref{lem:4} and we obtain
\begin{align*}
	\frac{\theta_t(\xi)}{\theta_t(0)}
	\ge
	\exp\big(-\pi t \xi^2\big) 
	\ge
	\exp\bigg(-8 \pi^3 \xi^2 e^{-\pi/t} \Big( 1 + \frac{D^2 e^{-\pi/(2D)}}{\pi^2}  \Big) \bigg), 
		\qquad t \in [T_0,D], \quad \xi \in \TT.
\end{align*} 
The latter inequality is true because we have 
$\sup_{t \in [T_0,D]} \pi t e^{\pi/t} \le \pi D e^{\pi/T_{0}} 
=  8 \pi^3 D\Big( 1 + \frac{D^2 e^{-\pi/(2D)}}{\pi^2}  \Big)$.
This gives the lower bound in \eqref{id:125m} in Case 2.
	This finishes the proof of Proposition~\ref{pro:1mod}.
\end{proof}

Now we are ready to prove Proposition~\ref{pro:hTd}.

\begin{proof}[Proof of Proposition~\ref{pro:hTd}.]
Taking into account \eqref{HKT} and \eqref{eq:ft2}, we see that 
\begin{align*}
	\frac{ H_t(\xi) }{ H_t(0) } 
	=
	\frac{ \Theta_{1/(4\pi t)} (\xi) }{ \Theta_{1/(4\pi t)} (0) } 
	=
	\mathcal F_{\ZZ^d}{g}_{1/(4\pi t)} (\xi).
\end{align*}
Using Propositions~\ref{pro:est_inf} we get the upper bound in \eqref{HKT1}. 
Observe that the lower bound in \eqref{HKT1} follows directly from the lower bound in Lemma~\ref{lem:4} and the product structure of $\Theta_{t}$, see \eqref{eq:Theta_def}.
Therefore our task is to show \eqref{HKT2}.  This, however, is a straightforward consequence of Proposition~\ref{pro:1mod} applied with $D=32$.
The proof of Proposition~\ref{pro:hTd} is finished.
 
\end{proof}

\end{document}